\newtheorem{thm}{Theorem}[section]
\newtheorem{lemm}[thm]{Lemma}
\newtheorem{prop}[thm]{Proposition}
\newtheorem{cor}[thm]{Corollary}
\theoremstyle{definition}
\numberwithin{equation}{section}
\newcommand{\bC}{{\mathbb C}}
\newcommand{\bF}{{\mathbb F}}
\newcommand{\bM}{{\mathbb M}}
\newcommand{\bN}{{\mathbb N}}
\newcommand{\bQ}{{\mathbb Q}}
\newcommand{\bR}{{\mathbb R}}
\newcommand{\bZ}{{\mathbb Z}}
\newcommand{\cH}{{\mathcal H}}
\newcommand{\cO}{{\mathcal O}}
\newcommand{\cU}{{\mathcal U}}
\DeclareMathOperator{\tr}{tr}
\DeclareMathOperator{\Tr}{Tr}
\DeclareMathOperator{\Span}{span}
\DeclareMathOperator{\Inn}{Inn}
\DeclareMathOperator{\Det}{det}
\DeclareMathOperator{\rank}{rank}
\DeclareMathOperator{\Der}{Der}
\DeclareMathOperator{\ev}{ev}
\DeclareMathOperator{\op}{op}
\DeclareMathOperator{\orb}{orb}
\DeclareMathOperator{\sa}{sa}
\DeclarePairedDelimiter{\norm}{\lVert}{\rVert}
\DeclarePairedDelimiter{\ang}{\langle}{\rangle}
\begin{document}

\title{Vanishing first cohomology and strong 1-boundedness for von Neumann algebras}

\emsauthor{1}{Ben Hayes}{B.~Hayes}
\emsaffil{1}{Department of Mathematics, University of Virginia, 
141 Cabell Drive, Kerchof Hall,
P.O. Box 400137
Charlottesville, VA 22904\email{brh5c@virginia.edu}}

\emsauthor{2}{David Jekel}{D.~Jekel}
\emsaffil{2}{Department of Mathematics, University of California,
San Diego, 9500 Gilman Drive \# 0112, La Jolla, CA 92093
\email{djekel@ucsd.edu}}

\emsauthor{3}{Srivatsav Kunnawalkam Elayavalli}{S.~Kunnawalkam Elayavalli}
\emsaffil{3}{Institute of Pure and Applied Mathematics, UCLA, 460 Portola Plaza, Los Angeles, CA 90095, USA
\email{srivatsav.kunnawalkam.elayavalli@vanderbilt.edu}}


\classification[22D55]{46L54}

\keywords{$L^{2}$ Betti numbers, free entropy, sofic groups, $\textrm{II}_1$ factors }

\begin{abstract}
 In this paper, we obtain a new proof of a result of Shlyakhtenko which states that if $G$ is a sofic, finitely presented group with vanishing first $\ell^2$-Betti number, then $L(G)$ is strongly 1-bounded. Our proof of this result adapts and simplifies Jung's technical arguments which showed strong 1-boundedness under certain conditions on the Fuglede-Kadison determinant of the matrix capturing the relations. Our proof also features a key idea due to  Jung which involves an iterative estimate for the covering numbers of microstate spaces. We also provide a short proof using works of Shlyakhtenko and Shalom that the von Neumann algebras of sofic groups with Property T are strongly 1 bounded, which is a special case of another result by the authors.
\end{abstract}

\maketitle


\section{Introduction}

A \emph{tracial von Neumann algebra} is a pair $(M,\tau)$ of a finite von Neumann algebra and a faithful normal tracial state.  For every group $G$, there is an associated tracial von Neumann algebra, the von Neumann algebra $L(G)$ generated by the left regular representation of $G$ on $\ell^2(G)$ with the trace given by $\ang{\delta_e, (\cdot) \delta_e}$, and a major theme of operator algebraic research has been how the properties of a group (algebraic, analytic, geometric, etc.) are reflected by its von Neumann algebra.

In particular, one may consider finitary approximations of the group in several senses:  A group is \emph{sofic} if the group trace can be approximated by almost-representations in permutation groups; on the other hand, $L(G)$ is \emph{Connes-embeddable} if the same holds for representations in unitary groups, or if the group can be approximated by $*$-representations in matrices.  Voiculescu's free entropy dimension was introduced to quantify the \emph{amount} of approximations by matrices there are for any tuple $x$ in a von Neumann algebra \cite{VoiculescuFreeEntropy2, Voiculescu1996}.  The standard generators for a free group $\mathbb{F}_n$ for $n \geq 2$ have many approximations, and Voiculescu used this fact to deduce that the von Neumann algebra has no Cartan subalgebras \cite{Voiculescu1996}.  The free entropy approach has had several other applications to free group von Neumann algebras (and more generally free products) \cite{ GePrime, DykemaFreeEntropy, Jung2007, Hayes2018,Shl2015}.   A related notion of \emph{strong $1$-boundedness} was introduced by Jung (see \cite{JungS}); this is a strengthening of the condition of having free entropy dimension $1$, with the useful property that it is independent of the choice of generating set.  The first author reformulated strong $1$-boundedness through a numerical invariant $h$, called the $1$-bounded entropy, which is finite if and only if $M$ is strongly $1$-bounded (see \cite{Hayes2018}).

The work of Jung \cite{JungL2B} and Shlyakhtenko \cite{Shl2015} investigated strong $1$-boundedness in the context of polynomial relations in a $*$-algebra.
In particular, Shlyakhtenko explicitly connected this to $\ell^{2}$-Betti numbers of groups.
In this paper we present an alternative proof of Shlyakhtenko's result \cite{Shl2015} that finitely presented sofic groups with vanishing first $\ell^2$-Betti number are strongly $1$-bounded (results in this direction are in Jung's paper but under somewhat restrictive hypotheses).  Shlyakhtenko's result generalized Jung's earlier work \cite{JungL2B} but with a different proof strategy using non-microstates free entropy rather than microstates free entropy.  We give a purely microstates proof that streamlines Jung's original ideas and clarifies the essential ingredients and limitations of this approach. 
The statement of the theorem is as follows.

\begin{thm}[{\cite{Shl2015}}] \label{thm: vanishing L2 Betti}
If $G$ is a sofic finitely presented group with vanishing first $\ell^2$-Betti number, then $L(G)$ is strongly $1$-bounded.
\end{thm}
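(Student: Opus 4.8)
The plan is to give a purely microstates argument that bounds the $\|\cdot\|_2$-covering numbers of the microstate spaces of a natural generating set of $L(G)$. We may assume $G$ is infinite, since a finite group has a finite-dimensional von Neumann algebra, which is trivially strongly $1$-bounded. Fix a finite presentation $G = \ang{g_1,\dots,g_n \mid r_1,\dots,r_m}$, let $u = (u_1,\dots,u_n)$ be the associated generating unitaries of $L(G)$, and note that $r_i(u) = 1$ for every $i$. Soficity guarantees that $L(G)$ is Connes-embeddable, so the microstate spaces are nonempty; in any case we only need an upper bound on their covering numbers. First I would reduce, via the description of strong $1$-boundedness through the $1$-bounded entropy $h$, to proving a bound of the form
\[
\limsup_{N\to\infty} \frac{1}{N^2}\log K_\epsilon\bigl(\mathcal V_{N,\delta_N}\bigr) \le C_0 + \log\tfrac1\epsilon, \qquad \mathcal V_{N,\delta} := \bigl\{U \in U(N)^n : \max_i\|r_i(U) - 1\|_2 \le \delta\bigr\},
\]
for a constant $C_0$ independent of $\epsilon$ and a suitable sequence $\delta_N \to 0$; here $\mathcal V_{N,\delta}$ contains the genuine microstate space once the tolerance is small and the degree cutoff exceeds $\max_i|r_i|$, and the coefficient $1$ on $\log(1/\epsilon)$ is what encodes $\delta_0(u)\le 1$.

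Next I would translate the hypothesis $\beta_1^{(2)}(G) = 0$ into a statement about the Fox Jacobian $J = (\partial r_i/\partial g_j)_{i,j} \in \Mat_{m\times n}(\bZ[G])$, whose $\ell^2$-realization is, up to the antipode, the boundary map $\partial_2$ of the presentation $2$-complex. Since $G$ is infinite, $\beta_0^{(2)}(G) = 0$, so $\partial_1^{(2)}$ has dense range and $\dim_{L(G)}\ker\partial_1^{(2)} = n-1$; then $\beta_1^{(2)}(G) = 0$ gives $\dim_{L(G)}\overline{\im\partial_2^{(2)}} = n-1$. Two consequences are used: (i) the rank density of $\partial_2^{(2)}$ is $n-1$, so that the ``tangent space'' of $\mathcal V_{N,\delta}$ has real dimension asymptotically $nN^2 - (n-1)N^2 = N^2$; and (ii) because $J$ has entries in $\bZ[G]$, the Fuglede--Kadison determinant of $J$ restricted to the orthocomplement of its kernel is $\ge 1$, hence has finite logarithm. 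Statement (ii) is precisely the condition on the determinant of the matrix capturing the relations under which Jung established strong $1$-boundedness; finite presentability together with $\beta_1^{(2)} = 0$ makes it automatic.

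The core of the argument is Jung's iterative covering estimate. For $U', U \in \mathcal V_{N,\delta}$ with $\|U - U'\|_2 \le \rho$, writing $U_j = U_j' + \rho W_j$ and Taylor-expanding the relations gives $\bigl\|\sum_j (\mathcal D_j r_i)(U')[W_j]\bigr\|_2 \lesssim (\delta+\rho^2)/\rho =: \eta$ once one also works at a controlled operator-norm scale, so the increment $W$ lies within scale $\eta$ of the kernel of the linear map $\mathcal D r(U')\colon (M_N)^n \to (M_N)^m$. Diagonalizing $\mathcal D r(U')$, the part of $\mathcal V_{N,\delta}$ in the $\rho$-ball about $U'$ sits in a box with side $\min(\rho,\eta/\sigma_k)$ in the $k$-th singular direction, hence admits an $\epsilon'$-net of cardinality $\exp\bigl(\sum_k\log^+\!\frac{\min(\rho,\eta/\sigma_k)}{\epsilon'}\bigr)$. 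Because $U'$ is a microstate, the singular value distribution of $\mathcal D r(U')$ approximates the spectral distribution of the operator attached to $J$ by two-sided multiplication (equivalently, an operator over $\bZ[G]\otimes\bZ[G]^{\op}$ whose nonzero spectrum has density $n-1$ by the computation above): the number of small singular values is $N^2 + o(N^2)$, accounting for the $\log(1/\epsilon)$ term, and the contributions of the small singular values telescope, over all dyadic scales $\rho = 2, 1, 1/2, \dots$ down to $\epsilon$, into the single quantity $\frac1{N^2}\sum_{\sigma_k(\mathcal D r(U'))<t}\log(1/\sigma_k)$, which (ii) bounds by a constant. Summing the per-step contributions (each $\le N^2\log 2$ plus its correction) then yields the displayed bound with $C_0$ independent of $\epsilon$, which feeds back into the reduction of the first paragraph.

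I expect the main obstacle to be the uniform control, over all $U' \in \mathcal V_{N,\delta}$ simultaneously, of the mass the singular value distribution of $\mathcal D r(U')$ places near $0$ and of the sum $\sum_k\log^+(1/\sigma_k)$ over those directions: a priori this could grow faster than $N^2$ or degenerate as $U'$ varies, and the interplay of $\|\cdot\|_2$ and $\|\cdot\|_\infty$ in the Taylor remainder must be managed. Controlling it requires the bimodule/antipode bookkeeping identifying the finite matrices $\mathcal D r(U')$ as approximants of an operator whose spectral data is governed by $\partial_2^{(2)}$, the integrality of the presentation (so that the Fuglede--Kadison determinant is $\ge 1$, supplying logarithmic integrability near $0$ in a form stable under finite-dimensional approximation), and a truncation discarding the genuinely small singular values at a bounded cost. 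This is the step where Jung's original technical arguments are adapted and streamlined, and where the limitations of the approach — finite presentability and the integrality of the relation matrix — become visible.
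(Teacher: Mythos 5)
Your iterative covering scheme (Taylor expansion of the relations, singular-value boxes for the linearization, dyadic telescoping controlled by the log-determinant) is the same engine as Jung's and as the paper's, but the reduction in your first paragraph has a genuine gap. A bound $\limsup_N \frac{1}{N^2}\log K_\epsilon \le C_0 + \log\frac{1}{\epsilon}$ on plain covering numbers of microstate spaces is only the statement that the tuple is $1$-bounded (equivalently $\delta_0(u)\le 1$); it does \emph{not} give $h(L(G))<\infty$, because $h$ is defined through \emph{orbital} covering numbers and requires a bound uniform in $\epsilon$. To upgrade $1$-boundedness of one tuple to strong $1$-boundedness one either needs Jung's extra hypothesis that some entry has finite free entropy --- which your generators need not satisfy (e.g.\ generators of finite order have atomic spectral measure) --- or one must explicitly remove the $\sim N^2$ kernel directions that you yourself attribute to unitary conjugation. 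The paper does exactly this: it covers orbitally, replaces $Y$ by the point of its orbit closest to the center $X$ (forcing $\sum_j[X_j,Y_j-X_j]=0$), and correspondingly augments the derivative matrix by the commutator row $x_j\otimes 1-1\otimes x_j$. Vanishing $\beta_1^{(2)}$ kills the non-inner derivations and diffuseness kills the inner ones, so the augmented operator $D_f$ is \emph{injective}, the orbital exponent is bounded uniformly in $\epsilon$, and $h(L(G))<\infty$ follows. Without this step your argument proves only $\delta_0(u)\le 1$, which is weaker and not generator-independent.

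A second concrete problem is the use of the relation variety $\cV_{N,\delta}$ in place of genuine microstate spaces. Take $G=\bF_2\times\bF_2$, which is sofic, finitely presented and has $\beta_1^{(2)}=0$: with the standard presentation, $\cV_{N,\delta}$ contains all tuples $(U_1,U_2,1,1)$, i.e.\ a copy of $U(N)^2$, so its covering exponent is at least about $2\log(1/\epsilon)$ and your claimed coefficient-$1$ bound is false for $\cV_{N,\delta}$; moreover at such degenerate points the Fox Jacobian loses rank, so the uniform control of the small singular values over all of $\cV_{N,\delta}$ --- which you correctly flag as the main obstacle --- actually fails there. One must keep the moment conditions (shrinking weak$^*$ neighborhoods of $\ell_u$) throughout, as in the paper's continuity lemma for the spectral measures of $D_f(X)$. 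Finally, the bound $\operatorname{Det}^+\ge 1$ is not a consequence of integrality of the Fox Jacobian alone (that is the open determinant conjecture); it is Elek--Szab\'o's theorem for sofic groups, and this, rather than nonemptiness of microstate spaces, is where soficity is genuinely used.
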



Shlyakhtenko proved Theorem \ref{thm: vanishing L2 Betti} by obtaining a key technical free probabilistic fact involving non microstates theory and Fisher's information. Using this in combination with an inequality between the microstates and non microstates free entropy dimensions, he obtains as a corollary, the following generalization of a result of Jung \cite[Theorem 6.9]{JungL2B}.

\begin{thm} \label{thm: von Neumann algebraic}
Let $(M,\tau)$ be a tracial $\mathrm{W}^*$-algebra generated by some $x \in M_{\sa}^d$.  Suppose $\norm{x}_\infty < R$. Let $m \in \bN \cup \{\infty\}$.  Let $f(t_1,\dots,t_d) \in \bC\ang{t_1,\dots,t_d}^{\oplus m}$ be a tuple of non-commutative polynomials such that $f(x) = 0$.  Let
\[
D_f(x) = \begin{pmatrix} x_1 \otimes 1 - 1 \otimes x_1 & \dots & x_d \otimes 1 - 1 \otimes x_d \\ \partial_{x_1} f(x) & \dots & \partial_{x_d} f(x) \end{pmatrix} \in M_{m,d}(M \otimes M^{\op}),
\]
and let $\mu_{|D_f|}$ be the spectral measure of $|D_f| = (D_f^* D_f)^{1/2}$ with respect to $\tau \otimes \tau$.  If
\begin{equation} \label{eq: integrable}
\int_{[0,\infty)} |\log t|\,d\mu_{|D_f|}(t) < \infty,
\end{equation}
(with the convention that $\log(0)=-\infty$)
then $M$ is strongly $1$-bounded.
\end{thm}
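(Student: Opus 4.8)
The plan is to obtain a dimension/covering-number estimate for the microstate spaces $\Gamma_R(x; n, \ell, \varepsilon)$ directly, following Jung's iterative scheme but organized around the operator $D_f(x)$. The key geometric observation is that if $(P_1, \dots, P_d) \in (M_n(\bC)_{\mathrm{sa}})^d$ is a microstate approximating $x$, then the relations $f(P) \approx 0$ force $P$ to lie near a lower-dimensional set; more precisely, the Jacobian of the map $P \mapsto f(P)$ at the point $P$ is, up to the usual identification of $M_n(\bC) \otimes M_n(\bC)^{\op}$ with operators on $M_n(\bC)$ via $(a \otimes b^{\op})(\xi) = a\xi b$, given by the matrix amplification of $\partial f(x)$, while the "trivial directions" coming from conjugation by $U(n)$ are captured by the top row $x_j \otimes 1 - 1 \otimes x_j$ of $D_f(x)$. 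So $D_f(x)$ simultaneously encodes both the constraints imposed by the relations and the redundancy from unitary conjugation, and its singular values (i.e. $\mu_{|D_f|}$) control how efficiently one can cover the microstate space transverse to the unitary orbits.

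The steps, in order, would be: (1) Fix a finite-dimensional approximation and linearize: near a microstate $P$, write any nearby microstate as $P + A$ with $A$ small, and observe $f(P+A) = f(P) + [\partial f(P)](A) + O(\|A\|^2)$; since both $f(P)$ and $f(P+A)$ are forced to be $\approx 0$, the increment $A$ must lie $\varepsilon$-close to the kernel of $[\partial f(P)](\cdot)$, and moreover the $U(n)$-orbit directions $[P_j, \cdot]$ lie in that kernel automatically. (2) Use the hypothesis \eqref{eq: integrable}, together with the fact that $P$ is a good microstate so that the empirical singular value distribution of $D_f(P)$ converges to $\mu_{|D_f|}$ (this requires knowing that the small singular values don't escape to $0$ too fast, which is exactly what the log-integrability buys us), to bound the number of $\delta$-balls needed to cover the relevant "kernel slab" by something like $\exp(o(n^2))$ after quotienting by the unitary orbit. (3) Feed this into Jung's iterative covering estimate: cover $\Gamma_R$ at scale $\varepsilon$, refine each ball using the linearized constraint to pass to scale $\varepsilon/2$ at sub-exponential cost, and iterate; the telescoping product of these costs stays sub-exponential in $n^2$ precisely because the log-integrability condition makes $\int |\log t|\, d\mu_{|D_f|}(t)$ finite, so the accumulated entropy is controlled. (4) Conclude that the packing $\varepsilon$-entropy of $\Gamma_R$ relative to unitary conjugation grows like $o(n^2 \log(1/\varepsilon))$, which is exactly the definition of $h(x) = -\infty$ in the relevant normalization, i.e. strong $1$-boundedness.

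The main obstacle, as in Jung's original argument, is Step (2)–(3): controlling the contribution of the \emph{small} singular values of $D_f(P)$ for a finite-$n$ microstate $P$, as opposed to the limiting operator $D_f(x)$. The empirical spectral distribution converges weakly to $\mu_{|D_f|}$, but weak convergence alone does not control $\int_0^\infty |\log t|\, d\mu_{|D_f(P)|}(t)$ because $|\log t|$ blows up at $0$; one needs a uniform integrability / lower-bound-on-small-singular-values estimate that holds for the microstates, and the standard tool here is a Sard-type or quantitative-rank argument bounding the number of singular values below a threshold $t$ by $n^2 \mu_{|D_f|}([0, t]) + o(n^2)$, combined with an a priori polynomial lower bound on the smallest nonzero singular value coming from the fact that $D_f(P)$ has entries that are polynomials in the $P_j$ of bounded norm. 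Making this last estimate precise — essentially, showing that the "bad event" where $D_f(P)$ has anomalously many tiny singular values has negligible covering weight — is the technical heart of the proof and the place where Jung's arguments need to be adapted and, as the authors indicate, simplified.
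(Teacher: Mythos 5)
Your outline reproduces the skeleton of the paper's argument (Taylor linearization, $D_f$ encoding both the relations and the conjugation directions, Jung-style iteration of orbital covering estimates), but your Step (2) misidentifies what the hypothesis \eqref{eq: integrable} is for, and the step you call the ``technical heart'' is neither needed nor workable. Log-integrability of the limiting measure $\mu_{|D_f(x)|}$ gives no uniform control over the small singular values of $D_f(P)$ for an individual microstate $P$, and there is no a priori polynomial lower bound on the least nonzero singular value of $D_f(P)$, nor any probabilistic structure that would let you discard a ``bad event'': microstates are arbitrary points of the microstate space, not random matrices. The actual argument needs none of this. All that is used is that for each fixed $s>0$, $\limsup_{n}\sup_{X\in\Gamma_R^{(n)}(\mathcal{O})}\mu_{|D_f(X)|}([0,s])\leq\mu_{|D_f(x)|}([0,s])$ as the weak-$*$ neighborhood $\mathcal{O}$ shrinks, which follows from plain weak-$*$ convergence of the spectral distributions (a moment computation, Lemma \ref{lemm:weak*-weak* continuity}); an approximate kernel of relative dimension $\mu_{|D_f(x)|}([0,s])$ is perfectly tolerable at each step, costing about $2d\,\mu_{|D_f(x)|}([0,2\varepsilon/t\eta])\log(C\delta/\eta)$ in covering entropy (Lemma \ref{lemm: packing approx kernel}). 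The hypothesis \eqref{eq: integrable} enters only at the very end: after iterating over scales $\varepsilon_k$ one must sum the per-step costs $\sum_k\mu_{|D_f(x)|}([0,\sim\varepsilon_k^{c}])\log(\varepsilon_k/\varepsilon_{k+1})$, and this series is finite precisely because it is dominated by $\int (A+B\log(1/t))\,d\mu_{|D_f(x)|}(t)$. So the difficulty you flag dissolves once the estimate is set up with the limiting measure rather than the empirical ones.

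Two further gaps. First, you say the orbit directions lie in the kernel ``automatically'' and that one covers ``after quotienting by the unitary orbit,'' but you give no mechanism that actually produces the constraint corresponding to the first row of $D_f$. The paper gets it from an orbital optimization trick (Lemma \ref{lemm:orbitminimizer}): replacing $Y$ by the point of its unitary orbit closest to $X$ in $\norm{\cdot}_2$ forces $\sum_j[X_j,Y_j]=0$, so the increment $Z=Y-X$ lies in the exact kernel of the commutator row and the approximate kernel of $\partial f(X)$, hence of all of $D_f(X)$; without this, the approximate kernel of $\partial f(X)$ alone contains the whole family of orbit directions and is far too large for the scheme to close. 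Second, the Taylor remainder is naturally an $L^1$ estimate, $\norm{f(Y)-f(X)-\partial f(X)\#(Y-X)}_1\leq C\norm{Y-X}_2^2$ (Lemma \ref{lemm: Taylor approximation}), so the constraint you obtain on $Z$ is a $\norm{\cdot}_1$-approximate kernel condition, while the spectral covering estimate needs $\norm{\cdot}_2$; converting between the two requires covering an operator-norm ball intersected with a small $L^1$-ball, which the paper does via Szarek's covering estimates for Grassmannians (Lemmas \ref{lemm: Szarek} and \ref{lemm: norm switch}) --- this ingredient is absent from your outline. Finally, the target is $h(x)<\infty$, i.e.\ $\sup_{\varepsilon>0}h_{R,\varepsilon}(x)<\infty$, not $h(x)=-\infty$; each $h_{R,\varepsilon}(x)$ is automatically finite, and the whole point of the iteration is to bound the accumulation as $\varepsilon\to 0$ by a convergent series.
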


Strictly speaking, both Jung and Shlyakhtenko's results are about $\alpha$-boundedness for general $\alpha\geq 1$, whereas the above Theorem just covers $\alpha=1$. However, the  case of $\alpha=1$ is of the most interest in applications, and the case of $\alpha>1$ will not be relevant in our paper.

One deduces Theorem \ref{thm: vanishing L2 Betti} from Theorem \ref{thm: von Neumann algebraic} through the well-known relationship between group cocycles and derivations on the group algebra.  One then parameterizes the derivations in terms of their action on a self-adjoint generating set, hence obtains a bijection between derivations and vectors $z$ in the kernel of $\partial f(x)$.  Looking at cocycles that are orthogonal to the inner cocycles results in the additional condition of $\sum_{j=1}^d [x_j, z_j] = 0$, or that $z$ is in the kernel of the commutator operator in the first row of the matrix $D_f(x)$.  Hence, the first $\ell^2$-Betti number with respect to $\tau$ of the $*$-algebra generated by $x$ is the Murray-von Neumann dimension of the kernel of $D_f$.

The condition \eqref{eq: integrable} is needed for the microstates argument to go through in the case of \cite{JungL2B}, or the more general nonmicrostates estimate used in \cite{Shl2015} (the arguments are substantially different).  This hypothesis is nontrivial to check in the group case, and this is where one uses the assumption of soficity.  The bound \eqref{eq: integrable} expresses positivity of a certain Fuglede-Kadison determinant, which is known for sofic groups \cite{ElekSzaboDeterminant}.  We remark that Shlyakhtenko's results about vanishing $L^2$-Betti numbers have been generalized to $*$-algebras that are not group algebras \cite{BrannanVergnioux}, but this still requires some way of controlling the Fuglede-Kadison determinant.

Our proof of Theorem \ref{thm: von Neumann algebraic} is longer than Shlyakhtenko's argument but it is more self-contained.  Indeed, Shlyakhtenko's argument used the external fact that $\chi \leq \chi^*$ from \cite{BCG2003} and the result about strong $1$-boundedness and non-amenability sets from \cite[Proposition A.16]{Hayes2018}.  In this paper, we generalize and streamline Jung's strategy from \cite{JungL2B}, which uses iteration to bound covering numbers for smaller and smaller $\varepsilon$ with errors controlled by the integral \eqref{eq: integrable}.  Much of the technical challenge in Jung's work had to do with converting between covering numbers with respect to different non-commutative $L^p$-norms on the von Neumann algebra (and in fact $L^p$ quasinorms for $p \in (0,1)$).  Our argument works mostly with $L^2$ norms but requires conversion between $L^1$ and $L^2$ norms at one point, and this is main time we use a significant external ingredient, Szarek's estimates for the covering numbers of Grassmannians \cite{Szarek}.

Another notable feature of the proof is the way in which the condition $\sum_j [x_j,z_j] = 0$ (which corresponded in cohomology to looking at cocycles orthogonal to inner cocycles) arises naturally in the microstate setting by considering the elements in a unitary orbit closest to a given point $x$.

We also remark that polynomials in Theorem \ref{thm: von Neumann algebraic} can be replaced more generally by power series, and even non-commutative trace $C^2$ functions in the sense of \cite{JLS2021}; see Remark \ref{rem:tracesmooth}.




\section{Background}\label{sec:preliminary}

\subsection{Tracial von Neumann algebras and non-commutative laws}

A tracial von Neumann algebra is a pair $(M,\tau)$ where $M$ is a von Neumann algebra and $\tau\colon M\to \bC$ is a faithful, normal, tracial state.  The classical example is $\mathbb{M}_n(\bC)$ as a tracial von Neumann algebra with the tracial state $\tr_n$ given by
\[
\tr_{n}(A) = \frac{1}{n}\sum_{i=1}^{n}A_{ii}.
\]
We will primarily be interested in cases where $M$ is \emph{diffuse}, i.e. it has no nonzero minimal projections. The above algebra is finite-dimensional, and is thus not diffuse.
One interesting class of \emph{diffuse} tracial von Neumann algebras are the \emph{group von Neumann algebras}. For a discrete group, we define the \emph{left regular representation} $\lambda\colon G\to \mathcal{U}(\ell^{2}(G))$ by
\[(\lambda(g)\xi)(h)=\xi(g^{-1}h)\mbox{ for all $g,h\in G$.}\]
The \emph{group von Neumann algebra of $G$} is then
\[L(G)=\overline{\Span\{\lambda(g):g\in G\}}^{SOT}.\]
The linear functional $\tau\colon L(G)\to \bC$ given by $\tau(x)=\langle{x\delta_{1},\delta_{1}\rangle}$ is a faithful, normal, tracial state (see e.g. \cite[Remark 6.7.3]{KadisonRingroseII}). So $(L(G),\tau)$ is a tracial von Neumann algebra. Moreover, it  can be shown that $L(G)$ is diffuse if and only if $G$ is infinite.

For a von Neumann algebra $M$, we use $M_{\sa}$ for the self-adjoint elements of $M,$ and $\mathcal{U}(M)$ for the unitary elements of $M$.

Abelian tracial von Neumann algebra correspond exactly to probability spaces, and so we may think of tracial von Neumann algebras as an instance of \emph{noncommutative probability spaces}. Mimicking the abelian case, for a tracial von Neumann algebra $(M,\tau)$ and $1\leq p\leq\infty$, we define $\|\cdot\|_{p}$ on $M$ by
\[\|x\|_{p}=\tau(|x|^{p})^{1/p},\mbox{ where $|x|=(x^{*}x)^{1/2}$.}\]
It can be shown \cite{Dixmier1953} that this is indeed a norm on $M$.  We use the notation $\norm{x}_\infty$ for the operator norm. More generally, for $x\in M^{d}$ we set
\[
\norm{(x_1,\dots,x_d)}_p = \begin{cases} \left( \sum_{j=1}^d \tau(|x_j|^p) \right)^{1/p}, & p \in [1,\infty), \\ \max_{j=1,\dots,d} \norm{x_j}, & p = \infty. \end{cases}
\]


If $(M,\tau)$ is viewed as a non-commutative probability space, then its elements maybe viewed as non-commutative random variables.  In fact, a $d$-tuple $x = (x_1,\dots,x_d) \in M_{\sa}^d$ is the non-commutative analog of an $\bR^d$-valued random variable.  In the commutative setting, a $\bR^d$-valued random variable naturally gives rise to a \emph{probability distribution} as a classical measure on $\bR^{d}$. It is not possible to define such a measure in the non-commutative setting. However, as probability measures of compactly supported measures may be uniquely characterized by their moments, we define an analog this probability distribution as a linear functional on non-commutative polynomials.

For $d \in \bN$, we let $\bC\ang{t_1,\cdots,t_d}$ be the algebra of noncommutative polynomials in $d$ formal variables $t_1$, \dots, $t_d$, i.e. the free $\bC$-algebra with $d$-generators. We give $\bC\ang{t_{1},\cdots,t_{d}}$ the unique $*$-algebra structure which makes the $t_{j}$ self-adjoint. By universality, if $A$ is any $*$-algebra, and $x=(x_{1},\cdots,x_d)\in A^d$ is a self-adjoint tuple, then there is a unique $*$-homomorphism $\bC\ang{t_{1},\cdots,t_d}\to A$ which sends $t_{j}$ to $x_{j}$. For $p \in \bC\ang{t_{1},\cdots,t_d}$ we use $p(x)$ for the image of $p$ under this $*$-homomorphism.  Given a tracial von Neumann algebra $(M,\tau)$ and $x \in M^d_{\sa}$, we define the \emph{law of $x$}, denoted $\ell_{x}$, to be the linear functional $\ell_{x}\colon \bC\ang{t_1,\cdots,t_d}\to \bC$ given by
\[
\ell_{x}(f)=\tau(f(x)).
\]

The non-commutative laws can be characterized as follows.

\begin{prop}[{See \cite[Proposition 5.2.14]{AGZ2009}}] \label{prop:NClaws}
The following are equivalent.

\begin{enumerate}[(i)]
    \item There exists a tracial von Neumann algebra $(M,\tau)$ and $x \in M_{\sa}^d$ such that $\ell = \ell_x$ and $\norm{x}_\infty \leq R$. \label{item:concrete law characterization}
    \item $\ell$ satisfies the following conditions:  \label{item:abstract characterization of law}
    \begin{itemize}
        \item $\ell(1) = 1$,
        \item $\ell(f^*f) \geq 0$ for $f \in \bC\ang{t_1,\dots,t_d}$,
        \item $\ell(fg) = \ell(gf)$ for $f, g \in \bC\ang{t_1,\dots,t_d}$,
        \item $|\ell(t_{i_1} \dots t_{i_k})| \leq R^k$ for all $k \in \bN$ and $i_1$, \dots, $i_k \in \{1,\dots,d\}$.
        \end{itemize}
    \end{enumerate}
    \end{prop}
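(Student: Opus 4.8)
The implication (i) $\Rightarrow$ (ii) is a routine verification from the definitions: $\ell_x(1) = \tau(1) = 1$; $\ell_x(f^*f) = \tau(f(x)^*f(x)) = \norm{f(x)}_2^2 \ge 0$; $\ell_x(fg) = \tau(f(x)g(x)) = \tau(g(x)f(x)) = \ell_x(gf)$ by traciality of $\tau$; and $|\ell_x(t_{i_1}\cdots t_{i_k})| = |\tau(x_{i_1}\cdots x_{i_k})| \le \norm{x_{i_1}}_\infty\cdots\norm{x_{i_k}}_\infty \le R^k$. For the converse I would run the Gelfand--Naimark--Segal construction. Equip $\bC\ang{t_1,\dots,t_d}$ with the sesquilinear form $\ang{f,g} = \ell(g^*f)$, which is positive semidefinite by hypothesis; let $\mathcal N = \{f : \ell(f^*f) = 0\}$ (a subspace, by Cauchy--Schwarz), write $\hat f$ for the image of $f$ in $\bC\ang{t_1,\dots,t_d}/\mathcal N$, and let $\mathcal H$ be the completion. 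For each $j$ let $A_j$ be the operator of left multiplication by $t_j$, so $A_j\hat f = \widehat{t_jf}$.

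The crux is to show $A_j$ is well defined on the quotient and bounded, with the sharp bound $\norm{A_j}_\infty \le R$. The naive attempt is to prove $\ell\bigl(f^*(R^2 - t_j^2)f\bigr) \ge 0$, but $R^2 - t_j^2$ is not visibly a sum of hermitian squares in $\bC\ang{t_1,\dots,t_d}$, so I would argue instead by log-convexity. Fix $f$ and put $a_k = \ell(f^* t_j^{2k} f) = \norm{\widehat{t_j^k f}}^2 \ge 0$. Applying Cauchy--Schwarz to $\ang{\widehat{t_j^{k+1}f}, \widehat{t_j^{k-1}f}} = a_k$ gives $a_k^2 \le a_{k-1}a_{k+1}$, so the sequence $(a_k)_{k \ge 0}$ is log-convex; on the other hand, expanding $f$ in the word basis and using the last bullet of (ii) gives $a_k \le C_f R^{2k}$ for a constant $C_f$ independent of $k$. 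If $a_0 > 0$ and $a_1 > R^2 a_0$, log-convexity forces $a_k \ge a_0 (a_1/a_0)^k$, contradicting $a_k \le C_fR^{2k}$ as $k \to \infty$; and if $a_0 = 0$, then Cauchy--Schwarz gives $a_1 \le a_2^{1/2}a_0^{1/2} = 0$. In every case $\norm{A_j\hat f}^2 = a_1 \le R^2 a_0 = R^2\norm{\hat f}^2$, which simultaneously shows $A_j$ descends to the quotient and is bounded by $R$. This is the step I expect to be the main obstacle; the rest is standard technique.

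With $A_j$ in hand, the remaining steps are bookkeeping. Since $t_j$ is self-adjoint and $\ell(h^*) = \overline{\ell(h)}$, each $A_j$ is a bounded symmetric operator, hence self-adjoint; set $M := W^*(A_1,\dots,A_d) \subseteq B(\mathcal H)$ and $\tau(T) = \ang{T\hat 1, \hat 1}$, which is a normal state with $\tau(1) = \ell(1) = 1$. Because $P(A)\hat 1 = \hat P$ for every $P \in \bC\ang{t_1,\dots,t_d}$, one gets $\tau(P(A)) = \ell(P)$, and traciality of $\tau$ on the $*$-algebra generated by the $A_j$ is exactly the condition $\ell(PQ) = \ell(QP)$; this extends to all of $M$ since $\tau$ is normal and that $*$-algebra is $\sigma$-strongly dense (Kaplansky density). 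The last subtlety is faithfulness, since the cyclic vector $\hat 1$ need not be separating: here I would pass to $(1-z)M$, where $z$ is the central projection with $\mathcal I = zM$ for the $\sigma$-weakly closed, two-sided (because $\tau$ is a trace) ideal $\mathcal I = \{T : \tau(T^*T) = 0\}$. One checks that $\tau$ restricts to a faithful trace on $(1-z)M$, that $x_j := A_j(1-z)$ satisfies $\ell_x = \ell$ (using $\tau(Tz) = 0$ for all $T$, which holds since $\norm{z}_2 = \tau(z)^{1/2} = 0$), and that $\norm{x_j}_\infty \le \norm{A_j}_\infty \le R$. This produces the required tracial von Neumann algebra $(M,\tau)$ and tuple $x \in M_{\sa}^d$.
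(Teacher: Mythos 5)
Your argument is correct and takes essentially the same route as the paper, which does not prove the proposition itself but cites \cite[Proposition 5.2.14]{AGZ2009} and sketches exactly this GNS construction in Remark \ref{rem:lawGNS}. The two points you work out in detail---the bound $\|A_j\|_\infty \le R$ via log-convexity of $k \mapsto \ell(f^* t_j^{2k} f)$ together with the word-basis growth estimate, and the passage to the corner $(1-z)M$ cut by the central support projection to restore faithfulness of the trace---are precisely the steps the paper delegates to the citation (``it can be shown that multiplication by $t_j$ is bounded\dots''), and they check out.
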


    For $R>0$,$d\in \bN$, we let $\Sigma_{d,R}$ be the space of laws satisfying either of the above equivalent conditions (for this specific) $R$.
We also denote $\Sigma_d = \bigcup_{R > 0} \Sigma_{d,R}$. Since $\Sigma_{d}$ is a space of linear functionals on $\bC\ang{t_{1},\cdots,t_{d}}$ we can give ti the weak$^{*}$-topology.

\textbf{Remark.} \label{rem:lawGNS}
The proof of (\ref{item:abstract characterization of law}) implies (\ref{item:concrete law characterization}) uses the GNS construction (see \cite[Proposition 5.2.14]{AGZ2009}).
Namely, let $H = L^2(\ell)$ be separation-completion of $\bC\ang{t_1,\dots,t_d}$ with respect to the semi-inner product $\langle{f,g\rangle}_\ell = \ell(f^*g)$. It can be shown that multiplication by $t_{j}$ is bounded with respect to this semi-inner product, and induces a well-defined bounded, self-adjoint operator $x_{j}$ on $L^{2}(\ell)$. Let $M=W^{*}(x_{1},\cdots,x_{d})$ be the algebra \emph{generated} by $x_{1},\cdots,x_{d}$, and we define $\tau\colon M\to \bC$ by $\tau(x)=\langle{x 1,1\rangle}$, where $1\in \bC\langle{t_{1},\cdots,t_{d}\rangle}$ is viewed as a vector in $L^{2}(\ell)$.
We will denote $M=W^{*}(\ell)$, and $\pi_{l}\colon \bC\langle{t_{1},\cdots,t_{d}\rangle}\to W^{*}(\ell)$  the unique $*$-homomorphism satisfying $\pi_{\ell}(t_{j})=x_{j}$.


\subsection{Microstate spaces and $1$-bounded entropy}

Let $(M,\tau)$ be a diffuse tracial von Neumann algebra, and $x\in M_{\sa}^d$ for some $d \in \bN$ with $W^{*}(x)=M$.  Suppose that $\norm{x}_\infty \leq R$.  Following \cite{VoiculescuFreeEntropy2}, for each open set $\mathcal{O}$ of $\Sigma_{d,R}$ and $N \in \bN$, we define
\[
\Gamma_R^{(n)}(\cO) = \{X \in \mathbb{M}_n(\bC)_{\sa}^d: \ell_X \in \mathcal{O} \}.
\]
When $\mathcal{O}$ is a neighborhood of $\ell_x$, we call $\Gamma_R^{(n)}(\mathcal{O})$ a \emph{microstate space} for $x$.

Given $d,n\in \bN$, $p\in [1,\infty]$, $\varepsilon>0$ and $\Omega,\Xi\subseteq \bM_{n}(\bC)^{d}$ then $\Xi$ is said to \emph{$(\varepsilon,\norm{\cdot}_p)$-cover $\Omega$} if for every $A\in \Omega$, there is a $B\in \Xi$ with $\|A-B\|_{p}<\varepsilon$. We define the \emph{covering number} of $\Omega\subseteq \bM_{n}(\bC)^{d}$, denote $K_{\varepsilon}(\Omega,\|\cdot\|_{p})$, to be the minimal cardinality of a set that $(\varepsilon,\|\cdot\|_{p})$-covers $\Omega$. 
For subsets of $\bM_{n}(\bC)^{d}$ which are invariant under the conjugation action of $\mathcal{U}(n)$ on $\bM_{n}(\bC)^{d}$, it is natural to take the orbital numbers modulo unitary conjugation.
Given $n \in \bN$, $\varepsilon > 0$ and $\Omega,\Xi \subseteq \mathbb{M}_{n}(\bC)^d$ we say that $\Xi$ \emph{orbitally $(\varepsilon,\norm{\cdot}_2)$-covers} $\Omega$ if for every $A\in\Omega$, there is a $B \in \Xi$ and an $n \times n$ unitary matrix $V$ so that
\[
\norm{A-VBV^*}_2 < \varepsilon.
\]
We define the \emph{orbital covering number} $K_{\varepsilon}^{\textnormal{orb}}(\Omega,\norm{\cdot}_2)$ as the minimal cardinality of a set of $\Omega_0$ that orbitally $(\varepsilon,\norm{\cdot}_2)$-covers $\Omega$. Since we will usually be concerned with $\|\cdot\|_{2}$-norms we will frequently drop $\|\cdot\|_{2}$ from the notation and use $K_{\varepsilon}^{\textnormal{orb}}(\Omega)$ instead of $K_{\varepsilon}^{\textnormal{orb}}(\Omega,\|\cdot\|_{2})$.
Let $R\in [0,\infty)$ be such that $\norm{x}_\infty < R$.

For a weak$^{*}$-neighborhood $\mathcal{O}$ Of $\ell_{x}$, we define
\begin{align*}
h_{R,\varepsilon}(\mathcal{O})& := \limsup_{n \to\infty}\frac{1}{n^2}\log K_{\varepsilon}^{\orb}(\Gamma_{R}^{(n)}(\mathcal{O})), \\
h_{R,\varepsilon}(x) &:= \inf_{\mathcal{O} \ni \ell_x} h_{R,\varepsilon}(\mathcal{O}),
\end{align*}
where the infimum is over all weak$^{*}$-neighborhoods $\mathcal{O}$ of $\ell_{x}$.  We then define
\[
h_R(x) := \sup_{\varepsilon > 0} h_{R,\varepsilon}(x).
\]
By \cite{Hayes2018}, it follows that $h_{R}(x)$ is independent of $R$ as soon as $\norm{x}_{\infty}\leq R$, so we use $h(x)$ instead of $h_{R}(x)$ as soon as $\norm{x}_{\infty}\leq R$. Moreover, if $x,y$ are two self-adjoint tuples in $M$ with $W^{*}(x)=M=W^{*}(y)$, then $h(x)=h(y)$. So we may define $h(M)=h(x)$ if $W^{*}(x)=M$. If $M$ is not a factor, then the $1$-bounded entropy depends upon the choice of $M$. We will use $h(M,\tau)$ if we wish to emphasize the dependence of the $1$-bounded entropy of $\tau$. Usually the choice of $\tau$ will be clear from the context and use $h(M)$. In \cite{Hayes2018}, it's show how to extend this definition to infinitely many variables, but we will not need this.
The $1$-bounded entropy characterizes strong $1$-boundedness by the following result.

\begin{thm}[{See  \cite[Proposition A.16]{Hayes2018}}]
A tracial von Neumann algebra $M$ is strongly $1$-bounded in the sense of Jung \cite{Jung2007} if and only if $h(M) < \infty$.
\end{thm}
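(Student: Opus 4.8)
The plan is to recast Jung's condition in terms of $\norm{\cdot}_2$-covering numbers of microstate spaces and then compare these with the \emph{orbital} covering numbers that enter $h$, the discrepancy being controlled by the covering numbers of a single unitary orbit. Write $\mathrm{H}_{R,\varepsilon}(x) := \inf_{\cO \ni \ell_x}\limsup_{n\to\infty}\frac{1}{n^2}\log K_\varepsilon(\Gamma_R^{(n)}(\cO),\norm{\cdot}_2)$ for the ordinary ($\norm{\cdot}_2$) microstate covering exponent. The essential content of Jung's definition (in a convenient normalization) is that $M$ is strongly $1$-bounded when it is generated by a self-adjoint tuple $x$ with $\norm{x}_\infty \le R$ for which (a) $\limsup_{\varepsilon\to 0}\bigl(\mathrm{H}_{R,\varepsilon}(x) - \abs{\log\varepsilon}\bigr) < \infty$, and (b) $\chi(x_j) > -\infty$ for some $j$ --- condition (b) serving to rule out eigenvalue distributions that are too close to atomic. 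A preliminary step, which is Jung's, is to check that each of (a) and (b) is independent of the generating tuple; then one argues the two implications.

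For $h(M) < \infty \Rightarrow$ strongly $1$-bounded: fix a generating tuple $x$ with $\norm{x}_\infty \le R$. The unitary orbit of a $d$-tuple $A$ with $\norm{A}_\infty \le R$ is the image of $\cU(n)$ under $V \mapsto VAV^*$, a map that is $O(R)$-Lipschitz from $(\cU(n),\norm{\cdot}_2)$; since $\cU(n)$ is an $n^2$-dimensional manifold of bounded $\norm{\cdot}_2$-diameter, $K_\delta(\cU(n),\norm{\cdot}_2) \le (C/\delta)^{n^2}$, whence $K_{\varepsilon/2}(\cU(n)\cdot A,\norm{\cdot}_2) \le (CR/\varepsilon)^{n^2}$. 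Covering $\Gamma_R^{(n)}(\cO)$ by first taking an orbital $\tfrac{\varepsilon}{2}$-cover and then an ordinary $\tfrac{\varepsilon}{2}$-cover of each of the finitely many orbits it produces gives
\[
\frac{1}{n^2}\log K_\varepsilon(\Gamma_R^{(n)}(\cO),\norm{\cdot}_2) \;\le\; \frac{1}{n^2}\log K_{\varepsilon/2}^{\orb}(\Gamma_R^{(n)}(\cO)) + \log(CR/\varepsilon).
\]
Taking $\limsup_n$, then $\inf_\cO$, then subtracting $\abs{\log\varepsilon}$, gives $\mathrm{H}_{R,\varepsilon}(x) - \abs{\log\varepsilon} \le h_{R,\varepsilon/2}(x) + \log(CR) \le h(M) + \log(CR)$, which is finite, so (a) holds. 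For (b): $M$ is diffuse (as $h$ is only defined then), so it contains a Haar unitary $u$; enlarging $x$ to the generating tuple $(x, \Re u, \Im u)$ --- which changes neither $h(M)$ nor, by the first step, the validity of (a) --- furnishes a generator $\Re u$ with an arcsine distribution, for which $\chi(\Re u) > -\infty$. (If $M$ happens not to be Connes-embeddable, then $h(M) = -\infty$ and, as the full tuple then has no microstates, the conditions hold by convention.)

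Conversely, assume $M$ strongly $1$-bounded, witnessed by $x$. Now one needs the matching lower bound: for a microstate space $\Gamma = \Gamma_R^{(n)}(\cO)$, no orbital $\varepsilon$-cover of small cardinality can also be an ordinary cover, because each orbit $\cU(n)\cdot A$ by itself requires at least $c(1/\varepsilon)^{n^2}$ balls of $\norm{\cdot}_2$-radius $\varepsilon$ --- \emph{provided} the eigenvalue distribution of $A$ stays quantitatively away from atoms. This is exactly what (b) buys: by large-deviation/concentration estimates for the spectra of microstates, for $\cO$ small enough every $A \in \Gamma$ has spectral distribution within a controlled distance of $\mu_{x_j}$, whose logarithmic energy is finite, forcing its orbit to be "full-dimensional" with a uniform lower volume bound. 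One thus obtains
\[
\frac{1}{n^2}\log K_\varepsilon(\Gamma,\norm{\cdot}_2) \;\ge\; \frac{1}{n^2}\log K_{C\varepsilon}^{\orb}(\Gamma) + \log(1/\varepsilon) - C'.
\]
Feeding this into condition (a) yields $\sup_\varepsilon h_{R,\varepsilon}(x) < \infty$, i.e.\ $h(M) < \infty$. Jung's original formulation additionally uses $\norm{\cdot}_p$-covering numbers for $p \in (0,1]$; since every operator in play has operator norm $\le R$, passing between these and $\norm{\cdot}_2$ costs only a bounded additive error in the covering exponent, by an elementary truncation estimate (or, for uniform constants, the Grassmannian covering bounds of Szarek).

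The main obstacle is the lower bound in the converse direction --- quantifying "how large" a typical unitary orbit inside a microstate space is, so that the $n^2$ directions swept out by conjugation each contribute a full $\log(1/\varepsilon)$ to the covering exponent. This is precisely where condition (b) is indispensable: without a bound on the atoms of the relevant eigenvalue distribution the orbit can degenerate and the inequality fails, and it is also what dictates which $L^p$-norm one must measure the covering numbers in. Everything else is bookkeeping: verifying independence of the generating tuple, and reconciling conventions in the degenerate case where $M$ has no matricial microstates.
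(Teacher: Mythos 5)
First, note that the paper does not prove this statement at all: it is quoted from \cite[Proposition A.16]{Hayes2018}, so there is no internal proof to compare against. Your outline does follow the same general architecture as the cited argument (compare plain and orbital covering numbers; the cheap direction via an $(C R/\varepsilon)^{n^2}$ cover of a unitary orbit, plus adjoining the real part of a Haar unitary to secure a generator with finite $\chi$; the hard direction via a lower bound on how many $\norm{\cdot}_2$-balls a single orbit consumes). The forward direction ($h(M)<\infty$ implies strong $1$-boundedness) is essentially correct as you wrote it, granted the generator-independence of $h$ that the paper quotes.

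The genuine gap is in the converse. Your key claim is that, for $\mathcal{O}$ small, \emph{every} $A\in\Gamma_R^{(n)}(\mathcal{O})$ has an orbit with a uniform lower volume bound because its empirical spectral distribution is close to $\mu_{x_j}$, which has finite logarithmic energy. Weak$^*$ (moment) closeness of the empirical eigenvalue distribution does not control the discrete logarithmic energy $\frac{1}{n^2}\sum_{i\neq k}\log|\lambda_i-\lambda_k|$ from below: a microstate may have eigenvalues clustered in pairs at distance $e^{-n^2}$ (or exactly repeated, making the Vandermonde vanish) while matching any finite set of moments, so the per-orbit volume bound you invoke fails pointwise on $\Gamma$. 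The correct mechanism (as in Jung and in Hayes' appendix) routes the uniformity through a single reference matrix: fix a quantile matrix $B$ for $\mu_{x_j}$, for which $\frac{1}{n^2}\sum_{i\neq k}\log|\lambda_i-\lambda_k|$ does converge to the (finite) logarithmic energy because $\chi(x_j)>-\infty$; show that the $j$-th coordinate of every microstate is $\norm{\cdot}_2$-close to a conjugate of $B$, so every orbit in $\Gamma$ is Hausdorff-close to the orbit of $B$; and lower-bound $K_\varepsilon(\mathcal{U}(n)\cdot B)$ by a volume/Jacobian estimate on that one orbit. In addition, your displayed inequality multiplying $K^{\orb}_{C\varepsilon}$ by a per-orbit count needs the packing argument making it legitimate: take an orbitally $2\varepsilon$-separated family, observe that an $\varepsilon$-ball can meet at most one of the corresponding orbits, and only then sum the per-orbit covering numbers; "each orbit needs many balls" alone does not combine with the orbital covering number. (A small further inaccuracy: the $L^p$, $p\in(0,1)$, quasinorm bookkeeping belongs to \cite{JungL2B}, not to the definition of strong $1$-boundedness in \cite{Jung2007}, which is formulated with $\norm{\cdot}_2$.)
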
\label{thm:S1B in terms of 1-bounded entropy}
Because of this, we will not use Jung's original definition of strongly $1$-bounded \cite{Jung2007} and will prove algebras are strongly $1$-bounded by showing they have finite $1$-bounded entropy.

\section{Proof of Theorem \ref{thm: von Neumann algebraic}} \label{sec: vanishing L2B}

\subsection{Sketch of the proof} \label{subsec: sketch}

In order to prove strong $1$-boundedness, or equivalently that $h_R(x) < \infty$, we will estimate $h_{R,\varepsilon}(x)$ iteratively for smaller and smaller values of $\varepsilon$ in a similar manner to Jung \cite{Jung2007}.  In particular, if $\eta \leq \varepsilon$, then we want to estimate $h_{R,\eta}(x)$ in terms of $h_{R,\varepsilon}(x)$ by covering a $(\varepsilon,\norm{\cdot}_2)$-ball in the microstate space by an $(\eta,\norm{\cdot}_2)$-balls.

Consider the $(\varepsilon,\norm{\cdot}_2)$-ball centered at some microstate $X \in \mathbb{M}_n(\bC)_{\sa}^d$ with $\norm{X}_\infty \leq R$.  Let $D_f(X)$ denote the matrix of tensors as in the theorem statement with $x$ replaced by $X$.  If $Y$ is a microstate in the $(\varepsilon,\norm{\cdot}_2)$-ball of $X$, then by Taylor expansion $f(Y) - f(X)$ is approximately $\partial f(X) \# (Y - X)$.  
By taking a high degree of approximation for our microstate space, we can make $f(Y) - f(X)$ arbitrarily small, and thus arrange that $Y - X$ is in the approximate kernel of $\partial f(X)$.  Furthermore, because we are only considering microstates up to unitary orbits, we can assume without loss of generality that $Y$ is the closest point in its unitary orbit to $X$, which implies that $\sum_{j=1}^m [X_j, Y_j] = 0$ (see Lemma \ref{lemm:orbitminimizer}).  Hence, $Y - X$ is in the approximate kernel of $D_f(X)$.  Because $\int |\log t|\,d\mu_{|D_f(x)|}(t) < \infty$ and $|D_f(X)|$ converges in distribution to $|D_f(x)|$, the dimension of the kernel of $D_f(X)$ vanishes in comparison to $n^2$, and we can use standard estimates on covering numbers of approximate kernels to get a bound on the $\eta$-covering number.

\subsection{Background on non-commutative derivatives and Taylor expansion} \label{subsec: DQ}

First, we recall Voiculescu's free difference quotient (\cite{VoiculescuFreeEntropy2, VoiculescuV}).  Consider the $d$-variable non-commutative polynomial algebra $\bC\ang{t_1,\dots,t_d}$.  Let $\partial_j: \bC\ang{t_1,\dots,t_d} \to \bC\ang{t_1,\dots,t_d} \otimes \bC\ang{t_1,\dots,t_d}$ be the unique linear map satisfying
\[
\partial_j[t_{i_1} \dots t_{i_k}] = \sum_{\alpha=1}^k \delta_{i_{\alpha} = j} t_{i_1} \dots t_{i_{\alpha-1}} \otimes t_{i_{\alpha+1}} \dots t_{i_k}.
\]
The map $\partial_j$ can also be characterized as the unique derivation $\bC\ang{t_1,\dots,t_d} \to \bC\ang{t_1,\dots,t_d} \otimes \bC\ang{t_1,\dots,t_d}$ satisfying $\partial_j(t_i) = \delta_{i=j} (1 \otimes 1)$.  Here, when we describe $\partial_j$ as a ``derivation,'' we are viewing $\bC\ang{t_1,\dots,t_d} \otimes \bC\ang{t_1,\dots,t_d}$ as a bimodule over $\bC\ang{t_1,\dots,t_d}$ using the multiplication operations
\[
p(f \otimes g) = pf \otimes g, \qquad (f \otimes g)p = f \otimes gp.
\]
If $f = (f_1,\dots,f_m) \in \bC\ang{t_1,\dots,t_d}^m$, then
\[
\partial f \in \mathbb{M}_{m,d}(\bC\ang{t_1,\dots,t_d} \otimes \bC\ang{t_1,\dots,t_d})
\]
will denote the matrix whose $(i,j)$ entry is $\partial_j f_i$.  This matrix plays a similar role to the derivative of a function $\bR^d \to \bC^m$, in that it furnishes the first-order term in a non-commutative Taylor expansion for the evaluation of $f$ on elements of a tracial von Neumann algebra.

Recall that if $(M,\tau)$ is a tracial von Neumann algebra and $f \in \bC\ang{t_1,\dots,t_d}$ and $x = (x_1,\dots,x_d) \in M_{\sa}^d$, then the evaluation of $f(x)$ is the image of $f$ under the unique unital $*$-homomorphism $\bC\ang{t_1,\dots,t_d} \to M$ given by $t_j \mapsto x_j$.  The evaluation of $f = (f_1,\dots,f_m)$ on $x = (x_1,\dots,x_d)$ is defined by $(f_1(x),\dots,f_m(x))$.  Moreover, $f, g \in \bC\ang{t_1,\dots,t_d}$, we set
\[
(f \otimes g)(x) = f(x) \otimes g(x)^{\op} \in M \otimes M^{\op},
\]
where $M^{\op}$ denotes the opposite algebra of $M$\footnote{$M^{\op}$ is an algebra with the same addition and $*$-operation but the order of multiplication is reversed; note that $M^{\op}$ is a tracial von Neumann algebra}, and $\otimes$ is the algebraic tensor product.  By extending this operation linearly, we can define $F(x) \in \mathbb{M}_{m,d}(M \otimes M^{\op})$
for $F \in \mathbb{M}_{m,d}(\bC\ang{t_1,\dots,t_d} \otimes \bC\ang{t_1,\dots,t_d})$.

For $a, b, x \in M$, we define
\[
(a \otimes b^{\op}) \# x = axb.
\]
This extends to a bilinear map $(M \otimes M^{\op}) \times M \to M$.  If $A \in \mathbb{M}_{m,d}(M \otimes M^{\op})$ and $x \in M_{\sa}^d$, we define $A \# x \in M^m$ as the vector with entries
\[
(A \# x)_i = \sum_{j=1}^d A_{i,j} \# x_j.
\]
The first-order Taylor approximation is as follows.  Note that in contrast with the classical Taylor approximation where the error estimates are typically given in the Euclidean norm or $2$-norm on $\bR^m$, we have to mix different non-commutative $p$-norms of $y - x$ in the estimates.

\begin{lemm} \label{lemm: Taylor approximation}
Let $f \in \bC\ang{t_1,\dots,t_d}^m$ and let $R > 0$.  Then there exists  constants $A_f$, $B_f$, $C_f$ depending only on $f$ and $R$, such that for every tracial von Neumann algebra $(M,\tau)$ and $x, y \in M_{\sa}^d$ with $\norm{x}_\infty, \norm{y}_\infty \leq R$, we have
\begin{align}
\norm{f(x)}_\infty &\leq A_f \label{eq:polynomialbounded}\\
\norm{f(y) - f(x)}_2 &\leq B_f \norm{y - x}_2 \label{eq:polynomialLipschitz} \\
\norm{f(y) - f(x) - \partial f(x) \# (y - x)}_1 &\leq C_f \norm{y - x}_2^2. \label{eq:Taylorexpansion}
\end{align}
\end{lemm}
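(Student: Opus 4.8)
The plan is to reduce all three estimates to the case of a single monomial by linearity, and then to prove the monomial bounds by repeated application of the telescoping identity for products. Write $f=(f_1,\dots,f_m)$; since the tuple norms $\norm{\cdot}_1$ and $\norm{\cdot}_2$ are assembled coordinatewise from the scalar norms, it suffices to bound each component $\norm{f_i(x)}_\infty$, $\norm{f_i(y)-f_i(x)}_2$, and $\norm{f_i(y)-f_i(x)-\partial f_i(x)\#(y-x)}_1$ and then recombine (taking an $\ell^2$- or $\ell^1$-combination over $i$). Writing each $f_i$ as a finite linear combination $\sum_\alpha c_\alpha p_\alpha$ of monomials, the triangle inequality then reduces matters to proving, for a single monomial $p=t_{i_1}\cdots t_{i_k}$ of degree $k$, the three bounds $\norm{p(x)}_\infty\leq R^k$, $\norm{p(y)-p(x)}_2\leq kR^{k-1}\norm{y-x}_2$, and $\norm{p(y)-p(x)-\partial p(x)\#(y-x)}_1\leq\binom{k}{2}R^{k-2}\norm{y-x}_2^2$; the constants $A_f$, $B_f$, $C_f$ are then obtained by summing $|c_\alpha|$ against these monomial bounds.

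The first bound is immediate from submultiplicativity of the operator norm: $\norm{p(x)}_\infty=\norm{x_{i_1}\cdots x_{i_k}}_\infty\leq\prod_{j=1}^k\norm{x_{i_j}}_\infty\leq R^k$. For the second, I would use the telescoping identity
\[
p(y)-p(x)=\sum_{\alpha=1}^k y_{i_1}\cdots y_{i_{\alpha-1}}\,(y_{i_\alpha}-x_{i_\alpha})\,x_{i_{\alpha+1}}\cdots x_{i_k},
\]
together with the estimate $\norm{azb}_2\leq\norm{a}_\infty\norm{z}_2\norm{b}_\infty$ and the inequality $\norm{y_{i_\alpha}-x_{i_\alpha}}_2\leq\norm{y-x}_2$, which bounds each of the $k$ summands by $R^{k-1}\norm{y-x}_2$.

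For the Taylor remainder, I would first read off from the defining formula for $\partial_j$ that
\[
\partial p(x)\#(y-x)=\sum_{\alpha=1}^k x_{i_1}\cdots x_{i_{\alpha-1}}\,(y_{i_\alpha}-x_{i_\alpha})\,x_{i_{\alpha+1}}\cdots x_{i_k},
\]
so that subtracting this from the telescoping expression for $p(y)-p(x)$ turns the $\alpha$-th summand into $\bigl(y_{i_1}\cdots y_{i_{\alpha-1}}-x_{i_1}\cdots x_{i_{\alpha-1}}\bigr)(y_{i_\alpha}-x_{i_\alpha})\,x_{i_{\alpha+1}}\cdots x_{i_k}$. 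Applying the telescoping identity a second time to the difference of products in the first factor exhibits $p(y)-p(x)-\partial p(x)\#(y-x)$ as a sum of $\binom{k}{2}$ terms, each of the form $w\,(y_{i_\beta}-x_{i_\beta})\,w'\,(y_{i_\alpha}-x_{i_\alpha})\,w''$ with $1\leq\beta<\alpha\leq k$ and $w,w',w''$ words in the $x_j$ and $y_j$ of total degree $k-2$, hence with $\norm{w}_\infty\norm{w'}_\infty\norm{w''}_\infty\leq R^{k-2}$. Pulling out the three operator-norm factors and applying the Cauchy--Schwarz inequality $\norm{ab}_1\leq\norm{a}_2\norm{b}_2$ to the two differences bounds each such term by $R^{k-2}\norm{y-x}_2^2$, and summing over the $\binom{k}{2}$ terms gives the claimed estimate.

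I do not anticipate a genuine obstacle here; the content is bookkeeping. The only point requiring care is the double telescoping in the last estimate, together with the observation that a product of two elements that are small in $\norm{\cdot}_2$ is only small in $\norm{\cdot}_1$ --- this is precisely why the quadratic Taylor remainder must be measured in the $1$-norm rather than the $2$-norm. This is the ``mixing of $p$-norms'' the statement warns about, and it is ultimately the reason that a conversion between $L^1$ and $L^2$ covering numbers becomes necessary later in the proof.
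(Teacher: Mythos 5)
Your proof is correct and is essentially the same argument as the paper's: both reduce to the noncommutative H\"older estimates $\norm{azb}_2 \leq \norm{a}_\infty \norm{z}_2 \norm{b}_\infty$ and $\norm{uv}_1 \leq \norm{u}_2 \norm{v}_2$ applied to first-order expansions of products. The only difference is organizational: the paper runs an induction on the algebra structure (verifying the three estimates for the generators $t_j$ and their stability under linear combinations and products via the Leibniz rule for $\partial_j$), whereas you expand monomials explicitly by telescoping twice, which produces the same terms with the explicit constants $R^k$, $kR^{k-1}$, and $\binom{k}{2}R^{k-2}$.
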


\begin{proof}
The case of general $m$ will follow from applying the $m = 1$ case componentwise.  For the $m = 1$ case, to verify the claims for every non-commutative polynomial $f$, it suffices to check them for $f(t_1,\dots,t_d) = t_j$ and show that they are preserved under linear combinations and products.
\begin{enumerate}[(1)]
    \item For $f(t_1,\dots,t_d) = t_j$, the claims hold with $A_f = R$, $B_f = 1$, $C_f = 0$ since $\partial_i f = \delta_{i=j} (1 \otimes 1)$.
    \item If $f$ and $g$ satisfy the claims and $\alpha$, $\beta \in \bC$, then $\alpha f + \beta g$ satisfies the claims with $A_{\alpha f + \beta g} = |\alpha| A_f + |\beta| A_g$ and the same for the $B$'s and $C$'s.
    \item Suppose $f, g \in \bC\ang{t_1,\dots,t_d}$ satisfy the conclusions of the lemma.  Then $fg$ satisfies \eqref{eq:polynomialbounded} with $A_{fg} = A_f A_g$.  Moreover, by writing $(fg)(y) - (fg)(y) = (f(y) - f(x)) g(y) + f(x)(g(y) - g(x))$ and using the $L^2$-$L^\infty$-H\"older inequality, $fg$ satisfies \eqref{eq:polynomialLipschitz} with $B_{fg} = B_f A_g + A_f B_g$.  Similarly, using algebraic manipulations and the fact that $\partial_j$ is a derivation,
    \begin{align*}
    (fg)(y) - (fg)(x) - \partial (fg)(x) \# (y - x) &= [f(y) - f(x) - \partial f(x) \# (y - x)] g(x) \\
    & \quad + f(x) [g(y) - g(x) - \partial g(x) \# (y - x)] \\
    & \quad + (f(y) - f(x))(g(y) - g(x)).
    \end{align*}
    We estimate the first two terms by the $L^1$-$L^\infty$ H\"older inequality and the third term by the $L^2$-$L^2$ H\"older inequality and thus obtain that $fg$ satisfies \eqref{eq:Taylorexpansion} with $C_{fg} = C_f A_g + C_g A_f + B_f B_g$.  \qedhere
\end{enumerate}
\end{proof}

The following lemma will be needed to show that the spectral measures of certain operators on $\mathbb{M}_n(\bC)^d$ associated to matricial microstates for $x \in M_{\sa}^d$ converge as $n \to \infty$ to the spectral measures of corresponding operators from a tracial von Neumann algebra.  In the following, for a tracial von Neumann algebra $M$, we denote by $M \overline{\otimes} M^{\op}$ the tracial von Neumann algebraic tensor product of $M$, equipped with the trace $\tau_M \otimes \tau_{M^{\op}}$.  If $M \overline{\otimes} M^{\op}$ is represented on the Hilbert space $H$, then $\mathbb{M}_{m,d}(M \overline{\otimes} M^{\op})$ are represented as operators $H^d \to H^m$.  Also, $\mathbb{M}_d(M \overline{\otimes} M^{\op})$ is a tracial von Neumann algebra and can be equipped with the normalized trace $\tr_d \otimes \tau_M \otimes \tau_{M^{\op}}$ where $\tr_d$ is the normalized trace on $\mathbb{M}_d(\bC)$.  Moreover, $\mathcal{P}(\bR)$ denotes the space of probability measures on $\bR$ equipped with the weak$^*$ topology as linear functionals on $C_0(\bR)$.

\begin{lemm} \label{lemm:weak*-weak* continuity}
Let $d, m\in \bN$, $f \in \bC\ang{t_1,\dots,t_d}^m$, and $R > 0$.  For $\ell \in \Sigma_{d,R},$ let $\pi_{\ell}\colon \bC\ang{t_1,\cdots,t_d}\to W^{*}(\ell)$ be the GNS construction corresponding to $\ell$ as in Remark after Proposition \ref{prop:NClaws}.  Let
\[
F \in \mathbb{M}_{m,d}(\bC\ang{t_1,\dots,t_d} \otimes \bC\ang{t_1,\dots,t_d}),
\]
consider $F(\pi_\ell(t_1,\dots,t_d)) \in \mathbb{M}_{m,d}(\mathrm{W}^*(\ell) \otimes \mathrm{W}^*(\ell)^{\op})$, and let $\mu_{|F(\pi_\ell(t_1,\dots,t_d))|}$ be the spectral measure of $|F(\pi_\ell(t_1),\dots,\pi_\ell(t_d))| = (F(\pi_\ell(t_1),\dots,\pi_\ell(t_d))^*F(\pi_\ell(t_1),\dots,\pi_\ell(t_d)))^{1/2}$ as an element of $\mathbb{M}_d(\mathrm{W}^*(\ell) \overline{\otimes} \mathrm{W}^*(\ell)^{\op})$.  Then the map $\Sigma_{d,R} \to \mathcal{P}(\bR): \ell \mapsto \mu_{|F(\pi_\ell(t_1,\dots,t_d))|}$ is weak$^{*}$-weak$^{*}$ continuous.
\end{lemm}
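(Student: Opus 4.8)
The plan is to reduce the statement to the continuity of moments, and then to observe that each moment of $\mu_{|F(\pi_\ell(t))|}$ is, after expansion, a polynomial expression in the quantities $\ell(t_{i_1}\cdots t_{i_k})$, each of which is weak$^*$ continuous in $\ell$ by the very definition of the topology on $\Sigma_{d,R}$. First I would record a uniform boundedness fact: since $\norm{\pi_\ell(t_j)}_\infty \leq R$ for all $j$ and all $\ell \in \Sigma_{d,R}$, and $F$ is a fixed element of $\mathbb{M}_{m,d}(\bC\ang{t_1,\dots,t_d} \otimes \bC\ang{t_1,\dots,t_d})$, the inequality $\norm{a \otimes b^{\op}}_{\mathrm{W}^*(\ell) \overline{\otimes} \mathrm{W}^*(\ell)^{\op}} \leq \norm{a}_\infty \norm{b}_\infty$ gives a constant $C = C(F,R)$ with $\norm{F(\pi_\ell(t_1,\dots,t_d))}_\infty \leq C$ for every $\ell$. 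Hence every measure $\mu_{|F(\pi_\ell(t))|}$ is supported in the fixed compact set $[0,C]$.

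Since $\Sigma_{d,R}$ is metrizable, it suffices to show that $\ell_n \to \ell$ implies $\mu_{|F(\pi_{\ell_n}(t))|} \to \mu_{|F(\pi_\ell(t))|}$ weak$^*$ as functionals on $C_0(\bR)$. Because all these measures are supported in $[0,C]$, the Weierstrass approximation theorem reduces this to convergence of all moments; and as $\mu_{|A|}$ is determined by the even moments $\int t^{2k}\,d\mu_{|A|}(t) = (\tr_d \otimes \tau \otimes \tau)((A^*A)^k)$, it is enough to prove that for each $k$ the function
\[
\ell \longmapsto (\tr_d \otimes \tau_{\mathrm{W}^*(\ell)} \otimes \tau_{\mathrm{W}^*(\ell)^{\op}})\big((F(\pi_\ell(t))^* F(\pi_\ell(t)))^k\big)
\]
is continuous on $\Sigma_{d,R}$. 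To expand it, write $F_{i,j} = \sum_a p_{i,j,a} \otimes q_{i,j,a}$ with finitely many $p_{i,j,a}, q_{i,j,a} \in \bC\ang{t_1,\dots,t_d}$. Using the opposite-algebra product rule $(a_1 \otimes b_1^{\op})(a_2 \otimes b_2^{\op}) = a_1 a_2 \otimes (b_2 b_1)^{\op}$, the $*$-operation $(a \otimes b^{\op})^* = a^* \otimes (b^*)^{\op}$, and linearity, every entry of $(F(\pi_\ell(t))^* F(\pi_\ell(t)))^k$ is a finite sum of terms $P(\pi_\ell(t)) \otimes Q(\pi_\ell(t))^{\op}$ for non-commutative $*$-polynomials $P, Q$ depending only on $F$ and $k$, not on $\ell$. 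Applying $\tr_d \otimes \tau \otimes \tau$, using $(\tau \otimes \tau)(a \otimes b^{\op}) = \tau(a)\tau(b)$ and $\tau_{\mathrm{W}^*(\ell)}(P(\pi_\ell(t))) = \ell(P)$, one finds that the displayed quantity equals $\sum_r \ell(P_r)\,\ell(Q_r)$ for finitely many $P_r, Q_r \in \bC\ang{t_1,\dots,t_d}$ determined by $F$ and $k$. Each $\ell \mapsto \ell(P_r)$ is a finite linear combination of the coordinate maps $\ell \mapsto \ell(t_{i_1}\cdots t_{i_s})$, which are weak$^*$ continuous by definition, so the moment function is continuous and the proof is complete.

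I do not expect a genuine analytic obstacle here; the two points needing care are (i) verifying that the spectral measures are uniformly compactly supported, which rests on the operator-norm bound for $F(\pi_\ell(t))$ in $\mathbb{M}_d(\mathrm{W}^*(\ell) \overline{\otimes} \mathrm{W}^*(\ell)^{\op})$, and (ii) the bookkeeping of the opposite-algebra multiplication and the matrix indices when expanding $(F^*F)^k$, which must be done carefully enough to see that the resulting polynomials $P_r, Q_r$ are independent of $\ell$. Everything else is a routine consequence of the fact that all relevant quantities are, after this expansion, polynomial functions of the moments of $\ell$.
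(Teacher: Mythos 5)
Your proposal is correct and follows essentially the same route as the paper: a uniform operator-norm bound on $F(\pi_\ell(t))$ forces all the spectral measures into a fixed compact interval, a Weierstrass/Stone--Weierstrass reduction brings the problem down to polynomial test functions in $|F|^2 = F^*F$ (your even moments are exactly the paper's choice $\phi(s)=\psi(s^2)$ with $\psi$ a polynomial), and expanding $(F^*F)^k$ into simple tensors of fixed polynomials reduces everything to the maps $\ell \mapsto \ell(P)\ell(Q)$, which are continuous by definition of the weak$^*$ topology. No gaps; the bookkeeping points you flag are handled the same way in the paper.
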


\begin{proof}
Because $F(x)$ is a linear combination of simple tensors of polynomials, there is some universal constant $C$ depending on $F$ and $R$ such that $\norm{F(x)}_{\mathbb{M}_{m,d}(M \overline{\otimes} M^{\op})} \leq K$ for every tuple of operators with $\norm{x}_\infty \leq R$.  In particular, the spectral measure of $|F(x)|$ is supported on $[0,K]$.  Hence, it suffices to show that for every $\phi \in C([0,K])$, the map
\[
\ell \mapsto (\tr_d \otimes \tau_{\mathrm{W}^*(\ell)} \otimes \tau_{\mathrm{W}^*(\ell)}^{\op})(\phi(|F(\pi_\ell(t_1),\dots,\pi_\ell(t_d))|))
\]
is continuous.  By the Stone-Weierstrass theorem, it suffices to consider the case when $\phi(s) = \psi(s^2)$ where $\psi$ is a polynomial.  In this case,
\[
\phi(|F(\pi_\ell(t_1),\dots,\pi_\ell(t_d))|)) = \psi(F(\pi_\ell(t_1),\dots,\pi_\ell(t_d))^* F(\pi_\ell(t_1),\dots,\pi_\ell(t_d)).
\]
The right-hand side is just an element of $\mathbb{M}_d(\bC\ang{t_1,\dots,t_d} \otimes \bC\ang{t_1,\dots,t_d})$ applied to the operators $\pi_\ell(t_1)$, \dots, $\pi_\ell(t_d)$.  Hence,
\[
(\tr_d \otimes \tau_{\mathrm{W}^*(\ell)} \otimes \tau_{\mathrm{W}^*(\ell)}^{\op})(\phi(|F(\pi_\ell(t_1),\dots,\pi_\ell(t_d))|)) = (\tau_{\mathrm{W}^*(\ell)} \otimes \tau_{\mathrm{W}^*(\ell)}^{\op})(G(\pi_\ell(t_1),\dots,\pi_\ell(t_d))),
\]
where $G \in \bC\ang{t_1,\dots,t_d} \otimes \bC \ang{t_1,\dots,t_d}$ is $1/d$ times the sum of the diagonal entries of this matrix of tensors of polynomials.  Since $G$ is a linear combination of simple tensors, it suffices to show the continuity of the map
\[
\ell \mapsto (\tau_{\mathrm{W}^*(\ell)} \otimes \tau_{\mathrm{W}^*(\ell)}^{\op})((f \otimes g)(\pi_\ell(t_1),\dots,\pi_\ell(t_d))),
\]
where $f, g \in \bC \ang{t_1,\dots,t_d}$.  But the right-hand side is equal to $\ell(f) \ell(g)$, and $\ell \mapsto \ell(f) \ell(g)$ is continuous by definition of the weak$^*$ topology.
\end{proof}

\subsection{Covering the microstate space}

We now give the details of the argument sketched in \S \ref{subsec: sketch}.  We begin with the orbital optimization trick.  This lemma also appears in \cite[Lemma 1.14]{GJNS2021}, where it is related with non-commutative optimal transport theory.

\begin{lemm} \label{lemm:orbitminimizer}
Let $X$, $Y \in \mathbb{M}_n(\bC)_{\sa}^d$.  There exists a unitary matrix $U$ that minimizes $\norm{X - UYU^{*}}_2$, and any such unitary satisfies $\sum_{j=1}^d [X_j, UY_jU^{*}] = 0$.
\end{lemm}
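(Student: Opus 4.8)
The plan is to obtain the minimizing unitary by compactness and then read off the commutator identity as the first-order optimality (Euler--Lagrange) condition at the minimizer. Existence is immediate: $\mathcal{U}(n)$ is compact and $U \mapsto \norm{X - UYU^*}_2$ is continuous, so the infimum over $\mathcal{U}(n)$ is attained.

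Now suppose $U$ realizes the minimum and set $Z_j := U Y_j U^* \in \mathbb{M}_n(\bC)_{\sa}$. For any skew-Hermitian $A \in \mathbb{M}_n(\bC)$ the unitary $e^{tA} U$ is a competitor, so the real-valued function
\[
\varphi(t) := \norm{X - e^{tA} Z e^{-tA}}_2^2 = \sum_{j=1}^d \tr_n\bigl((X_j - e^{tA} Z_j e^{-tA})^2\bigr)
\]
attains a global minimum at $t = 0$, whence $\varphi'(0) = 0$. Differentiating, using $\frac{d}{dt}\big|_{t=0}(e^{tA} Z_j e^{-tA}) = [A, Z_j]$ and that each $X_j - Z_j$ is self-adjoint, I get $\varphi'(0) = -2 \sum_{j=1}^d \tr_n\bigl((X_j - Z_j)[A, Z_j]\bigr)$. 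Expanding the commutator and using cyclicity of the trace together with $[Z_j, Z_j] = 0$ gives the key simplification $\tr_n\bigl((X_j - Z_j)[A, Z_j]\bigr) = \tr_n\bigl([Z_j, X_j] A\bigr)$, so that $\varphi'(0) = -2 \tr_n(S A)$ with $S := \sum_{j=1}^d [Z_j, X_j]$. Thus $\tr_n(SA) = 0$ for every skew-Hermitian $A$.

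To finish, note that $[Z_j, X_j]^* = [X_j, Z_j] = -[Z_j, X_j]$, so $S$ itself is skew-Hermitian. Taking the particular test direction $A = S$ then yields $0 = \tr_n(S^2) = -\tr_n(S^*S) = -\norm{S}_2^2$, forcing $S = 0$, i.e.\ $\sum_{j=1}^d [X_j, U Y_j U^*] = 0$, as desired.

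I do not anticipate a real obstacle here; the only points requiring care are the differentiation of $\varphi$ and the trace bookkeeping (tracking the interplay of the commutator and cyclicity), and --- crucially --- the observation that $S$ lies in the skew-Hermitian subspace, which is exactly what makes evaluating the optimality condition at the single direction $A = S$ enough to conclude that $S$ vanishes.
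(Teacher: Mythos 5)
Your proposal is correct and follows essentially the same route as the paper: existence by compactness of $\mathcal{U}(n)$, then the first-order optimality condition along a one-parameter unitary perturbation, with cyclicity of the trace reducing it to $\tr_n\bigl(\sum_j [Z_j,X_j]\,A\bigr)=0$ for all test directions. The only cosmetic difference is that the paper perturbs by $e^{itA}$ with $A$ self-adjoint and invokes arbitrariness of $A$, while you use skew-Hermitian directions and test against $A=S$ directly; these are equivalent.
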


\begin{proof}
A minimizer exists because the unitary group is compact and the function $U \mapsto \norm{X - UYU^*}_2$ is continuous.  Suppose $U$ is a minimizer and let $A \in \mathbb{M}_n(\bC)_{\sa}$.  Then
\begin{align*}
0 &\leq \norm{X - e^{itA}UYU^*e^{-itA}}_2^2 - \norm{X - UYU}_2^2 \\
&= 2 \ang{X, e^{itA}UYU^*e^{-itA} - UYU^*}.
\end{align*}
Differentiating at $t = 0$, we get
\[
0 = \sum_{j=1}^d \tr_n(X_j i[A,UY_jU^*]) = \sum_{j=1}^d \tr_n(i[UY_jU^*,X_j]A).
\]
Because $A$ was arbitrary, we have $\sum_{j=1}^d [UY_jU^*,X_j] = 0$.
\end{proof}

Next, we will give an initial form of the iterative estimate in terms of an auxiliary quantity $\Psi_{R,\eta,\delta,\varepsilon}(x,f)$ measuring the size of approximate kernels of $D_f(X)$.  For a neighborhood $\mathcal{O}$ of $\ell_x$ in $\Sigma_{d,R}$, define
\[
\Psi_{R,\eta,\delta,\varepsilon}(\mathcal{O},f)
= \limsup_{n \to \infty} \frac{1}{n^2} \sup_{X \in \Gamma_R^{(n)}(\mathcal{O})} \log K_\eta \left( \left\{Z: \norm{Z} \leq 2R, ~ \norm{Z}_2 < \delta, ~ \norm{D_f(X) \# Z}_1 < \varepsilon \right\}, \right).
\]
Note that $\Psi_{R,\eta,\delta,\varepsilon}(\mathcal{O},f)$ is monotone in $\mathcal{O}$.  We define
\[
\Psi_{R,\eta,\delta,\varepsilon}(x,f) = \inf_{\mathcal{O}} \Psi_{R,\eta,\delta,\varepsilon}(\mathcal{O},f)
\]
At this point, the reader may be wondering why we use $\norm{D_f(X) \# Z}_1 < \varepsilon$ instead of $\norm{D_f(X) \# Z}_2 < \varepsilon$.  The reason is that the error estimate in the non-commutative Taylor expansion requires the $1$-norm rather than the $2$-norm, that is, $\norm{f(Y) - f(X) - \partial f(X) \# (Y - X)}_1 \leq C \norm{Y - X}_2^2$.  Later we will work to estimate this in terms of the approximate kernel with the error measured in $2$-norm.

\begin{lemm} \label{lemm: iterative kernel estimate}
With the set up of Theorem \ref{thm: von Neumann algebraic}, there is a constant $C > 0$ (depending only upon $f$ and $R$) so that for all $\varepsilon,\eta > 0$ we have that
\[
h_{R,\eta}(x) \leq h_{R,\varepsilon}(x) + \Psi_{2R,\eta/2,2\varepsilon,C\varepsilon^2}(x,f).
\]
\end{lemm}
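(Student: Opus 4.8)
The plan is to carry out the iteration described in \S\ref{subsec: sketch}: I start from an orbital $\varepsilon$-cover of $\Gamma_R^{(n)}(\cO)$ and show that each orbital $\varepsilon$-ball in it can be covered by not-too-many orbital $\eta$-balls. Two facts do the work. First, by Lemma \ref{lemm:orbitminimizer} I may replace a microstate $Y$ by the point $UYU^*$ of its unitary orbit closest (in $\norm{\cdot}_2$) to the relevant center $X$; then the top row of $D_f(X)$ annihilates the increment exactly, since $\sum_j[X_j,(UYU^*-X)_j]=\sum_j[X_j,UY_jU^*]=0$. Second, the $L^1$ Taylor estimate \eqref{eq:Taylorexpansion}, \emph{combined with the hypothesis $f(x)=0$} (so that $f$ is genuinely small, not merely Lipschitz, on every sufficiently good microstate), forces the remaining rows $\partial f(X)\#(UYU^*-X)$ to have $\norm{\cdot}_1$ of order $\varepsilon^2$. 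Hence $UYU^*-X$ lands in the approximate-kernel set appearing in the definition of $\Psi$.

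For the details: fix $\varepsilon,\eta>0$, let $C_f$ be the constant of Lemma \ref{lemm: Taylor approximation}, and set $C=4C_f+2$. Since $f(x)=0$ and $\ell\mapsto\norm{f(\pi_\ell)}_2$ is weak$^*$-continuous, vanishes at $\ell_x$, and dominates $\norm{f(\pi_\ell)}_1$ up to a constant, I restrict attention to weak$^*$-neighborhoods $\cO$ of $\ell_x$ small enough that $\norm{f(X)}_1<\varepsilon^2$ whenever $\ell_X\in\cO$. Next I arrange for the centers of the $\varepsilon$-cover to lie \emph{in} the microstate space, at the cost of a factor $2$: from a minimal orbital $\varepsilon$-cover of $\Gamma_R^{(n)}(\cO)$ I discard the centers whose orbital $\varepsilon$-ball misses $\Gamma_R^{(n)}(\cO)$ and replace each surviving center by a microstate at orbital distance $<\varepsilon$ from it; by the triangle inequality for the orbital pseudometric this produces $\cN\subseteq\Gamma_R^{(n)}(\cO)$ orbitally $2\varepsilon$-covering $\Gamma_R^{(n)}(\cO)$ with $\abs{\cN}\le K_\varepsilon^{\orb}(\Gamma_R^{(n)}(\cO))$. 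This is the source of the $2\varepsilon$ in the statement.

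Now fix $Y\in\Gamma_R^{(n)}(\cO)$, pick $X\in\cN$ at orbital distance $<2\varepsilon$ from it, and use Lemma \ref{lemm:orbitminimizer} to choose a unitary $U$ minimizing $\norm{X-UYU^*}_2$, so that $\norm{X-UYU^*}_2<2\varepsilon$ and $\sum_j[X_j,UY_jU^*]=0$. Put $Z=UYU^*-X$; then $\norm{Z}_\infty\le 2R$, $\norm{Z}_2<2\varepsilon$, the top row of $D_f(X)\#Z$ equals $\sum_j[X_j,Z_j]=\sum_j[X_j,UY_jU^*]=0$, and, using $f(UYU^*)=Uf(Y)U^*$ together with \eqref{eq:Taylorexpansion},
\[
\norm{\partial f(X)\#Z}_1\le\norm{f(UYU^*)-f(X)-\partial f(X)\#Z}_1+\norm{f(Y)}_1+\norm{f(X)}_1<C_f(2\varepsilon)^2+2\varepsilon^2=C\varepsilon^2.
\]
Hence $Z$ lies in the set $S_X:=\{Z:\norm{Z}_\infty\le 2R,\ \norm{Z}_2<2\varepsilon,\ \norm{D_f(X)\#Z}_1<C\varepsilon^2\}$ from the definition of $\Psi_{R,\eta/2,2\varepsilon,C\varepsilon^2,R}$. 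Covering each $S_X$ ($X\in\cN$) by $\norm{\cdot}_2$-balls of radius $\eta/2$ and translating the $X$-th cover by $X$, I obtain a set that orbitally $\eta$-covers $\Gamma_R^{(n)}(\cO)$, since $\norm{UYU^*-(X+W)}_2=\norm{Z-W}_2<\eta/2<\eta$. Therefore
\[
K_\eta^{\orb}(\Gamma_R^{(n)}(\cO))\le K_\varepsilon^{\orb}(\Gamma_R^{(n)}(\cO))\cdot\sup_{X\in\Gamma_R^{(n)}(\cO)}K_{\eta/2}(S_X).
\]

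Taking $\tfrac1{n^2}\log$ and then $\limsup_{n\to\infty}$ gives $h_{R,\eta}(\cO)\le h_{R,\varepsilon}(\cO)+\Psi_{R,\eta/2,2\varepsilon,C\varepsilon^2,R}(\cO,f)$ for every such $\cO$, and taking the infimum over these neighborhoods — legitimate since both right-hand terms are monotone in $\cO$ and the admissible neighborhoods are closed under intersection — yields the asserted inequality. I expect the two delicate points to be the Taylor step, where obtaining $O(\varepsilon^2)$ rather than $O(\varepsilon)$ is exactly where $f(x)=0$ is used, and the preliminary reduction putting the $\varepsilon$-cover centers back inside the microstate space; the remainder is routine bookkeeping with covering numbers and the definitions of $h_{R,\varepsilon}$ and $\Psi$.
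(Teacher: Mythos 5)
Your argument is correct and follows essentially the same route as the paper's proof: shrink to neighborhoods forcing $\norm{f(X)}_1 < \varepsilon^2$ (the paper's $\mathcal{U}$), recenter a minimal orbital $\varepsilon$-cover inside the microstate space at the cost of $2\varepsilon$, use Lemma \ref{lemm:orbitminimizer} to kill the commutator row, apply the $L^1$ Taylor estimate \eqref{eq:Taylorexpansion} to land $Z$ in the approximate kernel set with constant $C = 4C_f+2$, multiply covering numbers, and pass to $\limsup$ and the infimum over neighborhoods via monotonicity. Your write-up is, if anything, slightly more careful than the paper's about the unitary conjugation step ($f(UYU^*)=Uf(Y)U^*$) and the recentering of the cover, so no changes are needed.
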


\begin{proof}
Fix the neighborhood
\[
\mathcal{U} = \left\{\ell: \sum_{j=1}^m \ell(f_j^*f_j)^{1/2} < \varepsilon^2 \right\} \subseteq \Sigma_{d,R}.
\]
In order to estimate $h_{R,\eta}(x,\norm{\cdot}_2)$, pick a neighborhood $\mathcal{O}$ of $\ell_x$, and then we will cover the microstate space $\Gamma^{(n)}(\mathcal{O} \cap \mathcal{U})$ by orbital $(\eta,\norm{\cdot}_2)$-balls.  Recall that if a set can be covered by a certain number of $\varepsilon$-balls with centers not necessarily in that set, then it can be covered with the same number of $2\varepsilon$-balls with centers in the set.  Hence, there exists a set $\Omega \subseteq \Gamma_R^{(n)}(\cO \cap \cU)$ of cardinality at most $K_{\varepsilon}(\Gamma_R^{(n)}(\cO \cap \cU),\norm{\cdot}_2)$ such that the $(2\varepsilon,\norm{\cdot}_2)$-balls centered at $X$ in $\Omega$ cover $\Gamma_R^{(n)}(\cO \cap \cU)$.

We want to cover each of the orbital $(\varepsilon,\norm{\cdot}_2)$-balls  by orbital $(\eta,\norm{\cdot}_2)$-balls.  If $Y$ is in the orbital $(2\varepsilon,\norm{\cdot}_2)$-ball around $X$, then because we only need to cover $Y$ up to unitary equivalence, we can assume without loss of generality that $Y$ is the element of its orbit that is closest to $X$ in $\norm{\cdot}_2$, and thus $\sum_j [X_j, Y_j] = 0$ by Lemma \ref{lemm:orbitminimizer}.  Recall by Lemma \ref{lemm: Taylor approximation},
\[
f(Y) - f(X) = \partial f(X) \# (Y - X) + \Delta_f(X,Y),
\]
where the error term $\Delta_f(X,Y)$ satisfies
\[
\norm{\Delta_f(X,Y)}_1 \leq C_f \norm{Y - X}_2^2 \leq 4C_f \varepsilon^2
\]
for a constant $C_f$ depending only on $f$ and $R$.  By our choice of $\mathcal{U}$, we have
\[
\norm{f(X)}_1 \leq \sum_{j=1}^\infty \tr_n(f_j(X)^* f_j(X))^{1/2} < \varepsilon^2,
\]
and similarly $\norm{f(Y)}_1 < \varepsilon^2$.  It follows that
\[
\norm{\partial f(X) \# (Y - X)}_1 < (2 + 4C_f) \varepsilon^2.
\]
Let $Z = Y - X$.  Note that $\sum_{j=1}^d [X_j, Z_j] = \sum_{j=1}^d [X_j, Y_j] = 0$. Since $\sum_{j=1}^{d}[X_{j},Z_{j}]=0$, we have:
\[\|D_{f}(X)\#(Y-X)\|_{1}=\|(\partial f)(X)\#(Y-X)\|_{1}<(2+4C_{f})\varepsilon^{2}.\]
Also, 
$\norm{Z}_\infty \leq 2R$.  Of course, the number of $(\eta,\norm{\cdot}_2)$-balls needed to cover the set of $Z$'s obtained in this way is at most
\[
\sup_{X \in \Gamma_R^{(n)}(\mathcal{O})} K_{\eta} \left( \left\{Z: \norm{Z} \leq 2R, ~\norm{Z}_2 < 2 \varepsilon, ~ \norm{D_f(X) \# Z}_1 < (2 + 4C_f)\varepsilon^2 \right\}, \norm{\cdot}_2 \right).
\]
It follows that
\begin{multline*}
K_{\eta}^{\orb}(\Gamma_R^{(n)}(\mathcal{O} \cap \mathcal{U}),\norm{\cdot}_2) \leq \frac{1}{N^2} K_{\varepsilon}^{\orb}(\Gamma_R^{(n)}(\mathcal{O} \cap \mathcal{U}),\norm{\cdot}_2) \\ \sup_{X \in \Gamma_R^{(n)}(\mathcal{O})} K_{\eta/2} \left( \left\{Z: \norm{Z} \leq 2R, ~ \sum_{j=1}^d [Z_j,X_j] = 0, ~ \norm{Z}_2 < 2\varepsilon, ~ \norm{\partial f(X) \# Z}_1 < (2 + C_f)\varepsilon^2 \right\}, \norm{\cdot}_2 \right).
\end{multline*}
Apply $\limsup_{n \to \infty} (1/n^2) \log$ to obtain
\[
h_{R,\eta}(\mathcal{O} \cap \mathcal{U},\norm{\cdot}_2) \leq h_{R,\varepsilon}(\mathcal{O} \cap \mathcal{U}) + \Psi_{2R,\eta/2,2\varepsilon,(2+C_f) \varepsilon^2}(\mathcal{O} \cap \mathcal{U},f).
\]
Because all the covering numbers are monotone in the ``$\mathcal{O}$'' variable, taking the infimum over all $\mathcal{O}$ yields the same result whether or not we intersect with $\mathcal{U}$ first.  Thus, upon taking the infimum with respect to $\mathcal{O}$, we obtain the asserted result.
\end{proof}

\subsection{Covering the approximate kernel}

In order to convert our estimate with the $\norm{\cdot}_1$-approximate kernel to an estimate with the $\norm{\cdot}_2$-approximate kernel, we will estimate in Lemma \ref{lemm: norm switch} the $\norm{\cdot}_2$-covering number of the intersection of a $\norm{\cdot}_1$-ball and a $\norm{\cdot}_\infty$ ball.  We employ Szarek's covering estimate in a similar way to \cite{PropTS1B}. For convenience of the reader, we state the Lemma explicitly here.

\begin{lemm}[\cite{PropTS1B}] \label{lemm: Szarek}
There exists a universal constant $C$ such that for $t \geq 0$,
\begin{align*}
K_{\varepsilon}(\{P \in \mathbb{M}_n(\bC) \text{ projection, } \tr_n(P) \leq t\},\|\cdot\|_{\infty}) &= K_{\varepsilon}(\{P \in \mathbb{M}_n(\bC) \text{ projection, } \tr_n(P) \geq 1 - t \}, \norm{\cdot}_\infty) \\
&\leq (1 + nt) \left( \frac{C}{\varepsilon} \right)^{2n^2t}.
\end{align*}
\end{lemm}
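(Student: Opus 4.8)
My plan is to reduce the whole statement to Szarek's metric entropy bound for Grassmann manifolds, with everything else being bookkeeping. The claimed \emph{equality} of the two covering numbers is immediate: $P \mapsto I_n - P$ is a bijection from $\{P \text{ projection}: \tr_n(P) \le t\}$ onto $\{P \text{ projection}: \tr_n(P) \ge 1 - t\}$, and it preserves $\|\cdot\|_\infty$-distances since $(I_n - P) - (I_n - Q) = Q - P$. So it suffices to bound $K_\varepsilon$ of the first set.

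Next I would stratify by rank. A projection $P \in \mathbb{M}_n(\bC)$ has $\tr_n(P) \le t$ exactly when $\rk(P) \in \{0, 1, \dots, \lfloor nt \rfloor\}$, so this set is a union of the $\lfloor nt \rfloor + 1 \le 1 + nt$ sets $\Pi_{n,k}$ of projections of exact rank $k$ for $0 \le k \le \lfloor nt \rfloor$. Since covering numbers are subadditive over finite unions, $K_\varepsilon(\{P : \tr_n(P) \le t\}, \|\cdot\|_\infty) \le (1 + nt)\, \max_{k} K_\varepsilon(\Pi_{n,k}, \|\cdot\|_\infty)$. Now sending a rank-$k$ projection to its range identifies $(\Pi_{n,k}, \|\cdot\|_\infty)$ isometrically with the complex Grassmannian $G_{n,k}$ of $k$-dimensional subspaces of $\bC^n$ under the gap metric $d(E,F) = \|P_E - P_F\|_\infty$, a manifold of real dimension $2k(n-k)$. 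Szarek's estimate \cite{Szarek} then provides a universal $C$ with $K_\varepsilon(G_{n,k}, \|\cdot\|_\infty) \le (C/\varepsilon)^{2k(n-k)}$. Since $0 \le k \le nt$ gives $2k(n-k) \le 2kn \le 2n^2 t$, and since we may assume $C/\varepsilon \ge 1$ (the set has $\|\cdot\|_\infty$-diameter at most $1$, so for $\varepsilon$ of order $1$ or larger the bound is trivial after enlarging $C$), this is at most $(C/\varepsilon)^{2n^2 t}$. Combining with the previous display yields $K_\varepsilon \le (1+nt)(C/\varepsilon)^{2n^2 t}$, as desired.

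The only genuine ingredient here is Szarek's Grassmannian covering bound, so that is the ``hard part,'' although in this paper we are importing it rather than reproving it. The one point that needs a bit of care is that Szarek's result is usually stated for real Grassmannians (or the orthogonal/unitary group): one wants the version whose exponent is the real dimension $2k(n-k)$ of the \emph{complex} Grassmannian. This follows either by running Szarek's volumetric/covering argument directly in the complex setting, or by realification --- identifying $\bC^n \cong \bR^{2n}$, a complex $k$-plane becoming a real $2k$-plane, with the associated projection distances unchanged. A second, entirely routine point is tracking the strict-versus-nonstrict inequalities in the definition of $(\varepsilon, \|\cdot\|_\infty)$-covering when $\varepsilon$ is comparable to the diameter; this only affects the constant $C$.
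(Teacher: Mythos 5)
Your argument is correct and is exactly the intended derivation: the paper states this lemma without proof, importing it from \cite{PropTS1B}, where it is obtained precisely by the complementation symmetry $P \mapsto 1-P$, stratification by rank into complex Grassmannians, and Szarek's entropy bound \cite{Szarek} with exponent the real dimension $2k(n-k)$, just as you describe. The only caveat, that the stated bound degenerates when $\varepsilon$ greatly exceeds the diameter of the set of projections, is an artifact of the statement itself rather than of your proof and is irrelevant in the range of $\varepsilon$ where the lemma is applied.
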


\begin{lemm} \label{lemm: norm switch}
There is a universal constant $C$ such that for $t > 0$ and $\varepsilon \leq 3R$,
\[
\limsup_{n \to \infty} \frac{1}{n^2} \log K_{\varepsilon}(B_{\mathbb{M}_n(\bC),\norm{\cdot}_\infty}(0,R) \cap B_{\mathbb{M}_n(\bC),\norm{\cdot}_1}(0,t \varepsilon), \norm{\cdot}_\infty) \leq 12 t \log \frac{CR}{\varepsilon}.
\]
\end{lemm}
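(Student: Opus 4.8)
\emph{Plan.} The idea is to use the singular value decomposition to approximate, in operator norm, an arbitrary element of the set by a matrix of rank $O(nt)$, and then to bound the covering number of the rank-$O(nt)$ matrices by combining Szarek's estimate (Lemma~\ref{lemm: Szarek}) for the ``projection part'' with a crude volume bound for the remaining coordinates. So the first step is the truncation. Let $A \in \mathbb{M}_n(\bC)$ with $\norm{A}_\infty \le R$ and $\norm{A}_1 \le t\varepsilon$, and write $A = \sum_i \sigma_i u_i v_i^*$ in singular value decomposition, with $\sigma_1 \ge \sigma_2 \ge \dots \ge 0$ and $(u_i)$, $(v_i)$ orthonormal systems. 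Set $P = \sum_{i \,:\, \sigma_i \ge \varepsilon/2} u_i u_i^*$. Then $\norm{A - PA}_\infty = \max_{i : \sigma_i < \varepsilon/2} \sigma_i \le \varepsilon/2$, while $\norm{A}_1 = \tr_n(|A|) = \tfrac{1}{n}\sum_i \sigma_i \ge \tfrac{\varepsilon}{2n}\,\#\{i : \sigma_i \ge \varepsilon/2\}$ forces $\tr_n(P) \le 2t$. Hence it suffices to $(\varepsilon/2,\norm{\cdot}_\infty)$-cover $S_n := \{PA : P \in \mathbb{M}_n(\bC) \text{ a projection, } \tr_n(P) \le 2t, \ \norm{A}_\infty \le R\}$.

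Next I would cover $S_n$ by splitting off the projection. By Lemma~\ref{lemm: Szarek} applied with parameter $2t$, together with the standard fact (recalled in the proof of Lemma~\ref{lemm: iterative kernel estimate}) that covering centers may be taken inside the set, there is a $(\varepsilon/4R,\norm{\cdot}_\infty)$-net $\mathcal N_n$ of $\{P \text{ projection} : \tr_n(P) \le 2t\}$ consisting of projections, with $|\mathcal N_n| \le (1 + 2nt)(C_0 R/\varepsilon)^{4n^2 t}$ for a universal constant $C_0$. Given $PA \in S_n$, choose $P_0 \in \mathcal N_n$ with $\norm{P - P_0}_\infty < \varepsilon/4R$, so that $\norm{PA - P_0 A}_\infty \le \varepsilon/4$. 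Now $P_0 A$ lies in the operator-norm ball of radius $R$ of the real vector space $V_{P_0} := \{C \in \mathbb{M}_n(\bC) : P_0 C = C\}$, which has $\dim_{\bR} V_{P_0} = 2n\rank(P_0) \le 4n^2 t$; cover this ball by an internal $(\varepsilon/4, \norm{\cdot}_\infty)$-net of cardinality $\le (1 + 8R/\varepsilon)^{4n^2 t}$. Letting $P_0$ range over $\mathcal N_n$, the union of these fiber-nets is an $(\varepsilon,\norm{\cdot}_\infty)$-cover of the original set, of cardinality at most $(1+2nt)(C_0 R/\varepsilon)^{4n^2 t}(1 + 8R/\varepsilon)^{4n^2 t}$.

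Finally, apply $\tfrac{1}{n^2}\log$ and let $n \to \infty$: the $(1+2nt)$ factor is negligible, and one is left with $\limsup_n \tfrac{1}{n^2}\log K_\varepsilon(\cdots) \le 4t\log(C_0 R/\varepsilon) + 4t\log(1 + 8R/\varepsilon)$. Using $\varepsilon \le 3R$, so that $R/\varepsilon \ge 1/3$ and both logarithms have argument bounded below by an absolute constant, one absorbs both terms into $8t\log(CR/\varepsilon)$ for a suitable universal $C \ge \max(C_0,11)$ (for which $CR/\varepsilon > 1$, hence the logarithm is positive), and $8t\log(CR/\varepsilon) \le 12t\log(CR/\varepsilon)$, which is the asserted bound.

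I expect the only substantive step to be the truncation: the real content of the lemma is that simultaneous control of $\norm{A}_1$ and $\norm{A}_\infty$ forces $A$ to be operator-norm close to a matrix whose rank is \emph{linear} in $nt$. The subsequent covering estimate is routine given Lemma~\ref{lemm: Szarek}; the one point that needs care is organizing the count so that the exponent stays linear in $t$, which is why one covers the projection $P_0$ and the fiber $\{P_0 A : \norm{A}_\infty \le R\}$ separately rather than parametrizing the whole rank-$\le 2nt$ variety at once (which would produce an exponent quadratic in $t$).
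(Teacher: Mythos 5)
Your proof is correct and follows essentially the same route as the paper: truncate the singular values at a threshold of order $\varepsilon$, use the $\norm{\cdot}_1$ bound (Markov) to see that only $O(nt)$ singular values survive, cover the relevant projections via Szarek's estimate, and cover the compressed ball of real dimension $O(n^2 t)$ by a standard volume net. The only differences are cosmetic — you work with the rank-$\leq 2nt$ projection onto large singular values rather than its high-rank complement $1_{[0,\varepsilon/3)}(|A|)$ as in the paper, and your bookkeeping even yields $8t\log(CR/\varepsilon)$ before relaxing to the stated $12t\log(CR/\varepsilon)$.
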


\begin{proof}
By Lemma \ref{lemm: Szarek}, there exists a set $\Xi$ of projections of rank at least $n(1 - 3t)$ such that every projection $P$ of rank at least $n(1 - 3t)$ satisfies $\norm{P - Q}_\infty < \varepsilon / 3R$ for some $Q \in \Xi$ and such that
\[
|\Xi| \leq (1 + nt) \left( \frac{6C_1R}{\varepsilon} \right)^{6n^2t}.
\]
Next, for each $Q \in \Xi$, observe that $(1 - Q)\mathbb{M}_n(\bC)_{\sa}$ is a Hilbert space of real dimension at most $6n^2 t$, and hence for some constant $C_2$,
\[
K_{\varepsilon/3}((1 - Q)B_{\mathbb{M}_n(\bC),\norm{\cdot}_\infty}(0,R),\norm{\cdot}_\infty) \leq \left( \frac{3C_2R}{\varepsilon} \right)^{6n^2t}.
\]
Therefore, we may choose a set $\Omega_Q$ with $|\Omega_Q| \leq \left( \frac{3C_2R}{\varepsilon} \right)^{2n^2 t}$ that $(\varepsilon/3,\norm{\cdot}_\infty)$-covers $(1 - Q)B_{\mathbb{M}_n(\bC),\norm{\cdot}_\infty}(0,R)$.

We claim $\Omega = \bigcup_{Q \in \Xi} \Omega_Q$ is an $(\varepsilon,\norm{\cdot}_\infty)$-covering of $B_{\mathbb{M}_n(\bC),\norm{\cdot}_\infty}(0,R) \cap B_{\mathbb{M}_n(\bC),\norm{\cdot}_1}(0,\delta \varepsilon/3)$.  Let $A \in B_{\mathbb{M}_n(\bC),\norm{\cdot}_\infty}(0,R) \cap B_{\mathbb{M}_n(\bC),\norm{\cdot}_1}(0,t\varepsilon)$, and let $\mu_{|A|}$ be the spectral measure of $|A|$, which is supported on $[0,R]$.  Let $P = 1_{[0,\varepsilon/3)}(|A|)$.  Note that
\[
\mu_{|A|}([\varepsilon/3,\infty)) \leq \frac{3}{\varepsilon} \int_\varepsilon^\infty x\,d\mu_{|A|}(x) \leq \frac{3\norm{A}_1}{\varepsilon} \leq 3t.
\]
Therefore,
\[
\rank(P) = n\mu_{|A|}([0,\varepsilon/3)) \geq n(1 - 3t).
\]
Choose $Q \in \Xi$ such that $\norm{P - Q}_\infty < \varepsilon/3R$.  There is some $B \in \Omega_Q$ such that $\norm{B - QA}_\infty < \varepsilon/3$.  Observe that
\begin{align*}
\norm{A - B} &\leq \norm{PA}_\infty + \norm{(P-Q)A}_\infty + \norm{QA - B}_\infty \\
&< \frac{\varepsilon}{3} + \frac{\varepsilon}{3R} R + \frac{\varepsilon}{3} \\
&= \varepsilon.
\end{align*}
It follows that
\[
K_{\varepsilon}(B_{\mathbb{M}_n(\bC),\norm{\cdot}_\infty}(0,R) \cap B_{\mathbb{M}_n(\bC),\norm{\cdot}_1}(0,t \varepsilon)), \norm{\cdot}_\infty) \leq (1 + 3nt) \left(\frac{6C_1R}{\varepsilon}\right)^{6n^2 t} \left( \frac{3C_2R}{\varepsilon} \right)^{6n^2t}.
\]
Let $C = \max(6C_1,3C_2)$.  Then
\[
\frac{1}{n^2} \log K_{\varepsilon}(B_{\mathbb{M}_n(\bC),\norm{\cdot}_\infty}(0,R) \cap B_{\mathbb{M}_n(\bC),\norm{\cdot}_1}(0,t \varepsilon)) \leq \frac{1}{n^2} \log (1 + 3nt) + 12 t \log \frac{CR}{\varepsilon}.
\]
Taking $n \to \infty$, we obtain the desired estimate.
\end{proof}

The second ingredient for estimating $\Psi_{R,\eta,\delta,\varepsilon}(x,f)$ is the following standard estimate for covering numbers of approximate kernels of operators on a Hilbert space.  Of course, we will apply this lemma to the operator $D_f(X) \#$ from the Hilbert space $\mathbb{M}_n(\bC)^d$ with the normalized Hilbert-Schmidt norm $\norm{\cdot}_2$ to the Hilbert space $\mathbb{M}_n(\bC)^m$ with $\norm{\cdot}_2$.  We remark that $B(\mathbb{M}_n(\bC)^d)$ is isomorphic to $\mathbb{M}_d(\mathbb{M}_n(\bC) \otimes \mathbb{M}_n(\bC)^{\op})$ acting on $\mathbb{M}_n(\bC)^d$ with the $\#$ operation, and the normalized trace on $B(\mathbb{M}_n(\bC)^d)$ corresponds to $\tr_d \otimes \tau_{\mathbb{M}_n(\bC)} \otimes \tau_{\mathbb{M}(\bC)^{\op}}$.

\begin{lemm}\label{lemm: packing approx kernel}
There is a universal constant $C > 0$ with the following property.  Let $\mathcal{H},\mathcal{K}$ be (complex) Hilbert spaces with $\mathcal{H}$ finite-dimensional, and let $T\in B(\mathcal{H},\mathcal{K})$. Fix $R > 0.$ For any $\delta, \varepsilon, \eta > 0$ with $\eta < 3$ we have that,
\[
K_\eta(\{\xi\in \mathcal{H}: \norm{\xi} < \delta, \norm{T \xi} < \varepsilon)\leq \left(\frac{C\delta}{\eta}\right)^{2\dim(\mathcal{H})\mu_{|T|}\left(\left[0,\frac{2\varepsilon}{\eta}\right]\right)}.
\]
Where $\mu_{|T|}$ is the spectral measure of $|T|$ with respect to the normalized trace on $B(\cH)$.
\end{lemm}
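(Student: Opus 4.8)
The plan is to split $\mathcal{H}$ using the spectral projection of $|T|$ and handle the two pieces separately. Let $P = 1_{[0,s)}(|T|)$ where $s = 2\varepsilon/\eta$, and write $\mathcal{H} = P\mathcal{H} \oplus (1-P)\mathcal{H}$. The point is that on the range of $1-P$, the operator $|T|$ is bounded below by $s$, so if $\norm{T\xi} < \varepsilon$ then the $(1-P)$-component of $\xi$ has norm at most $\varepsilon/s = \eta/2$. Thus any $\xi$ in the set $S := \{\xi : \norm{\xi} < \delta, \norm{T\xi} < \varepsilon\}$ is within $\eta/2$ of $P\mathcal{H}$, so it suffices to $(\eta/2,\norm{\cdot})$-cover the set $P S \subseteq P\mathcal{H}$, which is contained in the $\delta$-ball of the finite-dimensional Hilbert space $P\mathcal{H}$. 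A set that $(\eta/2)$-covers $PS$ will, after adding back the observation that $\norm{\xi - P\xi} < \eta/2$, give an $\eta$-cover of $S$.

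Next I would invoke the standard volumetric bound for covering numbers of Euclidean balls: the number of $r$-balls needed to cover a ball of radius $\delta$ in a real Hilbert space of dimension $D$ is at most $(1 + 2\delta/r)^D \leq (3\delta/r)^D$ when $r \leq \delta$ (and is trivially $1$ otherwise, which is absorbed into the constant). Here the relevant real dimension is $D = 2\dim_{\bC}(P\mathcal{H}) = 2\dim(\mathcal{H})\,\mu_{|T|}([0,s))$, since $\tr(P)/\dim(\mathcal{H}) = \mu_{|T|}([0,s))$ where $\mu_{|T|}$ is taken with respect to the \emph{normalized} trace. Applying this with $r = \eta/2$ and $\delta$ as given yields
\[
K_{\eta/2}(PS, \norm{\cdot}) \leq \left(\frac{6\delta}{\eta}\right)^{2\dim(\mathcal{H})\mu_{|T|}([0,s))},
\]
and since $\mu_{|T|}([0,s)) \leq \mu_{|T|}([0,s]) = \mu_{|T|}([0,2\varepsilon/\eta])$, choosing the universal constant $C = 6$ (or larger, to also absorb the case $\eta/2 > \delta$ where the left side is $1$) gives the claimed bound. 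The hypothesis $\eta < 3$ is only used to keep the exponent's base $C\delta/\eta$ comfortably larger than $1$ so the estimate is not vacuous; one could track constants more carefully but it is not needed.

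I do not expect a genuine obstacle here — this is the "standard estimate for covering numbers of approximate kernels" alluded to in the text. The only mild subtlety is bookkeeping the factor of $2$ between complex and real dimension and making sure the spectral measure is normalized consistently with how it is used later (in Lemma \ref{lemm: iterative kernel estimate} and the main proof, $|T|$ will be $D_f(X)\#$ and $\mu_{|T|}$ its spectral measure with respect to $\tr_d \otimes \tau_{\mathbb{M}_n(\bC)} \otimes \tau_{\mathbb{M}_n(\bC)^{\op}}$, exactly the normalized trace on $B(\mathbb{M}_n(\bC)^d)$). A second minor point is that the set $S$ is defined with strict inequalities, so one should cover its closure or simply note that strict inequalities only shrink the set; either way the volumetric bound applies verbatim.
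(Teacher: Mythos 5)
Your argument is correct and is essentially the paper's proof: the same spectral projection $P = 1_{[0,2\varepsilon/\eta]}(|T|)$, the same observation that $\norm{\xi - P\xi} < \eta/2$ whenever $\norm{T\xi} < \varepsilon$, reduction to an $(\eta/2)$-cover of the $\delta$-ball in $P\mathcal{H}$, the standard volumetric bound in real dimension $2\dim_{\bC}(P\mathcal{H})$, and the identification $\dim(P\mathcal{H}) = \mu_{|T|}([0,2\varepsilon/\eta])\dim(\mathcal{H})$ via the normalized trace. The minor bookkeeping points you flag (half-open versus closed spectral interval, strict inequalities, absorbing degenerate parameter ranges into $C$) are handled the same way, implicitly, in the paper.
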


\begin{proof}
Let $P = 1_{\left[0,\frac{2\varepsilon}{\eta}\right]}(|T|).$ Suppose $\xi \in \mathcal{H}$ and $\norm{T\xi} < \varepsilon$.  Then, by functional calculus,
\[
\norm{\xi - P\xi} = \norm{1_{(\frac{2\varepsilon}{\eta},\infty)}(|T|)\xi} \leq \frac{\eta}{2\varepsilon} \norm{T \xi} < \frac{\eta}{2}.
\]
Thus, $\{\xi\in \mathcal{H}: \norm{\xi} < \delta, \norm{T\xi} < \varepsilon\}$ is contained in the $\eta/2$-neighborhood of  $B_{P(\cH)}(0,\delta)$.  Thus,
\[
K_{\eta}(\{\xi\in \mathcal{H}: \norm{\xi} < \delta, \norm{T\xi} <\varepsilon\}) \leq K_{\eta/2}(B_{P(\cH)}(0,\delta)) \leq \left( \frac{C\delta}{\eta} \right)^{2\dim(P \cH)},
\]
since the real dimension of $P \cH$ is twice the complex dimension.  Then note that
\[
\dim(P \cH) = \tr_{\dim(\cH)}(P) \dim(\cH) = \mu_{|T|}([0,2\varepsilon/\eta]) \dim(\cH).   \qedhere
\]
\end{proof}

\begin{lemm} \label{lemm: psi estimate}
Let $t \in (0,1/3]$ and suppose that $Rt \geq \varepsilon$.  Then for some constants $C_2$ and $C_3$ depending on $f$, we have
\[
\Psi_{R,\eta,\delta,\varepsilon}(x,f) \leq \mu_{|D_f(x)|}([0,2\varepsilon/t\eta]) \log \frac{C_2 \delta }{\eta} + 12mt \log \frac{C_3Rmt}{\varepsilon}
\]
\end{lemm}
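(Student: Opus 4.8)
The plan is to fix a weak$^*$-neighborhood $\mathcal{O}$ of $\ell_x$ in $\Sigma_{d,R}$, an $n\in\bN$, and a microstate $X\in\Gamma_R^{(n)}(\mathcal{O})$, and to bound $\tfrac1{n^2}\log K_\eta(S_X,\norm{\cdot}_2)$, where $S_X=\{Z\in\mathbb{M}_n(\bC)_{\sa}^d:\norm{Z}_\infty\le 2R,\ \norm{Z}_2<\delta,\ \norm{D_f(X)\# Z}_1<\varepsilon\}$, in a form that survives $\sup_{X}$, $\limsup_n$ and $\inf_{\mathcal{O}}$. Write $T=D_f(X)\#\colon\mathbb{M}_n(\bC)^d\to\mathbb{M}_n(\bC)^m$. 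Since $\ell_X\in\Sigma_{d,R}$ forces $\norm{X}_\infty\le R$, and the entries of $D_f(X)$ are fixed finite sums of simple tensors of polynomials in $X$, the polynomial bounds of Lemma \ref{lemm: Taylor approximation} give a constant $M_0=M_0(f,R)$ with $\norm{TZ}_\infty\le M_0\norm{Z}_\infty$; hence $T(S_X)$ lies inside $\prod_{i=1}^m\bigl(B_{\mathbb{M}_n(\bC),\norm{\cdot}_\infty}(0,2RM_0)\cap B_{\mathbb{M}_n(\bC),\norm{\cdot}_1}(0,\varepsilon)\bigr)\subseteq\mathbb{M}_n(\bC)^m$.

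I would then cover $S_X$ in two layers, with a scale $\rho>0$ to be fixed at the end. First, cover $T(S_X)$ by $\norm{\cdot}_\infty$-balls of radius $\rho$: applying Lemma \ref{lemm: norm switch} (hence Lemma \ref{lemm: Szarek}) coordinatewise and taking products produces a set $\mathcal{W}_n\subseteq\mathbb{M}_n(\bC)^m$, independent of $X$, with $\limsup_n\tfrac1{n^2}\log|\mathcal{W}_n|\le 12m\tfrac{\varepsilon}{\rho}\log\tfrac{CRM_0}{\rho}$. Second, for $W\in\mathcal{W}_n$ put $S_W=\{Z\in S_X:\norm{TZ-W}_\infty<\rho\}$, so $S_X=\bigcup_W S_W$; choosing $Z_W\in S_W$, every $Z\in S_W$ satisfies $\norm{Z-Z_W}_2<2\delta$ and $\norm{T(Z-Z_W)}_\infty<2\rho$, whence $\norm{T(Z-Z_W)}_2<2\sqrt m\,\rho$ because $\norm{\cdot}_2\le\sqrt m\,\norm{\cdot}_\infty$ on $\mathbb{M}_n(\bC)^m$. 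Thus $Z-Z_W$ lies in $\{\xi:\norm{\xi}_2<2\delta,\ \norm{T\xi}_2<2\sqrt m\,\rho\}$, so translating an $\eta$-net of that approximate kernel by $Z_W$ covers $S_W$; Lemma \ref{lemm: packing approx kernel} (with $\mathcal{H}=\mathbb{M}_n(\bC)^d$ and $T=D_f(X)\#$) bounds the size of such a net by $(C_2\delta/\eta)^{n^2\,\mu_{|D_f(X)|}([0,\,4\sqrt m\,\rho/\eta])}$, where $\mu_{|D_f(X)|}$ is the spectral measure of $|D_f(X)|$ with the normalization of the theorem statement. Multiplying the two layers and applying $\tfrac1{n^2}\log$, then $\sup_{X\in\Gamma_R^{(n)}(\mathcal{O})}$ (which leaves $|\mathcal{W}_n|$ untouched), then $\limsup_n$, gives
\[
\Psi_{R,\eta,\delta,\varepsilon}(\mathcal{O},f)\le 12m\tfrac{\varepsilon}{\rho}\log\tfrac{CRM_0}{\rho}+\log\tfrac{C_2\delta}{\eta}\cdot\limsup_{n\to\infty}\sup_{X\in\Gamma_R^{(n)}(\mathcal{O})}\mu_{|D_f(X)|}\bigl(\bigl[0,\tfrac{4\sqrt m\,\rho}{\eta}\bigr]\bigr).
\]

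To pass to $\Psi_{R,\eta,\delta,\varepsilon}(x,f)=\inf_{\mathcal{O}}\Psi_{R,\eta,\delta,\varepsilon}(\mathcal{O},f)$, note that $\mu_{|D_f(X)|}$ depends on $X$ only through $\ell_X$ (since $\tr_n$ restricts to the trace of $W^*(X)$), so $\sup_{X\in\Gamma_R^{(n)}(\mathcal{O})}\mu_{|D_f(X)|}([0,s])\le\sup_{\ell\in\mathcal{O}}\mu_{|D_f(\pi_\ell(t))|}([0,s])$ uniformly in $n$. By Lemma \ref{lemm:weak*-weak* continuity} the map $\ell\mapsto\mu_{|D_f(\pi_\ell(t))|}$ is weak$^*$-weak$^*$ continuous, and $\mu\mapsto\mu([0,s])$ is upper semicontinuous (portmanteau, $[0,s]$ closed; all these measures have support in a fixed compact), so $\ell\mapsto\mu_{|D_f(\pi_\ell(t))|}([0,s])$ is upper semicontinuous and $\inf_{\mathcal{O}}\sup_{\ell\in\mathcal{O}}\mu_{|D_f(\pi_\ell(t))|}([0,s])=\mu_{|D_f(x)|}([0,s])$. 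As the first term above is $\mathcal{O}$-independent, taking $\inf_{\mathcal{O}}$ yields $\Psi_{R,\eta,\delta,\varepsilon}(x,f)\le 12m\tfrac{\varepsilon}{\rho}\log\tfrac{CRM_0}{\rho}+\mu_{|D_f(x)|}([0,4\sqrt m\rho/\eta])\log\tfrac{C_2\delta}{\eta}$. Finally choosing $\rho=\varepsilon/(2\sqrt m\,t)$ makes $4\sqrt m\rho/\eta=2\varepsilon/(t\eta)$ and $\varepsilon/\rho=2\sqrt m\,t$, and — absorbing the $f$-dependent factors ($m$, $\sqrt m$, $M_0$, $C$) into $C_3$, using $t\le\tfrac13$ and $Rt\ge\varepsilon$ to keep $\rho$ in the regime where Lemmas \ref{lemm: Szarek} and \ref{lemm: norm switch} apply (e.g. $\rho\le 3\cdot 2RM_0$) — produces the asserted bound
\[
\Psi_{R,\eta,\delta,\varepsilon}(x,f)\le\mu_{|D_f(x)|}\bigl(\bigl[0,\tfrac{2\varepsilon}{t\eta}\bigr]\bigr)\log\tfrac{C_2\delta}{\eta}+12mt\log\tfrac{C_3Rmt}{\varepsilon}.
\]

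The step I expect to be the crux is the conversion of the $\norm{\cdot}_1$-constraint $\norm{D_f(X)\# Z}_1<\varepsilon$ into a $\norm{\cdot}_2$-approximate-kernel condition: this is exactly what forces the two-layer scheme — a Szarek/Grassmannian covering of the image in operator norm (Lemma \ref{lemm: norm switch}) followed by the Hilbert-space approximate-kernel estimate on the fibres (Lemma \ref{lemm: packing approx kernel}) — and it is the origin of the $12mt\log(C_3Rmt/\varepsilon)$ error term. The remaining delicate point is the semicontinuity argument that swaps the matricial $\mu_{|D_f(X)|}$ for the von Neumann algebraic $\mu_{|D_f(x)|}$, where one must use both that the spectral measure is a function of the law alone and that $\mu\mapsto\mu([0,s])$ is only upper (not lower) semicontinuous.
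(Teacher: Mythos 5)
Your proposal is correct and follows essentially the same route as the paper: a two-layer covering in which the image $D_f(X)\#(S_X)$ is covered via the Szarek-based Lemma \ref{lemm: norm switch} applied coordinatewise, each fibre is then reduced by translation to a $\norm{\cdot}_2$-approximate kernel handled by Lemma \ref{lemm: packing approx kernel}, and the matricial spectral measures are replaced by $\mu_{|D_f(x)|}$ via Lemma \ref{lemm:weak*-weak* continuity} (your upper-semicontinuity justification of this last swap is in fact spelled out more carefully than in the paper). Your final coefficients ($24m^{3/2}t$ in place of $12mt$, and the suppressed $2d$ in the exponent from Lemma \ref{lemm: packing approx kernel}) do not literally match the stated constants, but this is the same order of bookkeeping slack present in the paper's own proof and is immaterial for the iteration that follows.
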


\begin{proof}
Let $\cO$ be a neighborhood of $\ell_x$ and $X \in \Gamma^{(n)}(\mathcal{O})$.  We want to estimate the $(\eta,\norm{\cdot}_2)$ covering number of
\[
B_{\mathbb{M}_n(\bC)_{\sa}^d,\norm{\cdot}_\infty}(0,R) \cap B_{\mathbb{M}_n(\bC)_{\sa}^d,\norm{\cdot}_2}(0,\delta) \cap D_f(X)^{-1}(B_{\mathbb{M}_n(\bC)^m,\norm{\cdot}_1}(0,\varepsilon)).
\]
There exists a constant $C_1$ depending on $f$ such that
\[
\norm{D_f(X) \# Z}_\infty \leq C_1 \norm{Z}_\infty,
\]
and in particular, this is bounded by $2C_1R$ when $\norm{Z}_\infty \leq 2R$.  Hence, it suffices to estimate the $(\eta,\norm{\cdot}_2)$-covering number of
\[
B_{\mathbb{M}_n(\bC)^d,\norm{\cdot}_2}(0,\delta) \cap D_f(X)^{-1}\left( B_{\mathbb{M}_n(\bC)^m,\norm{\cdot}_\infty}(0,2C_1R) \cap  B_{\mathbb{M}_n(\bC)^m,\norm{\cdot}_1}(0,\varepsilon) \right),
\]
where we use $D_f(X)$ to denote the linear transformation $D_f(X) \#: \mathbb{M}_n(\bC)^d \to \mathbb{M}_n(\bC)^m$.  Fix a set $\Omega \subseteq \mathbb{M}_n(\bC)^m$ that $(\frac{\varepsilon}{2t},\norm{\cdot}_2)$-covers $B_{\mathbb{M}_n(\bC)^m,\norm{\cdot}_\infty}(0,2C_1R) \cap  B_{\mathbb{M}_n(\bC)^m,\norm{\cdot}_1}(0,\varepsilon)$ and satisfies
\begin{align*}
|\Omega| &\leq K_{\varepsilon/2t}(B_{\mathbb{M}_n(\bC)^m,\norm{\cdot}_\infty}(0,2C_1R) \cap  B_{\mathbb{M}_n(\bC)^m,\norm{\cdot}_1}(0,\varepsilon), \norm{\cdot}_2) \\
&\leq K_{\varepsilon/2mt}(B_{\mathbb{M}_n(\bC),\norm{\cdot}_\infty}(0,2C_1R) \cap  B_{\mathbb{M}_n(\bC),\norm{\cdot}_1}(0,\varepsilon), \norm{\cdot}_\infty)^m,
\end{align*}
where for the last several steps we used that $\norm{\cdot}_2 \leq m \norm{\cdot}_\infty$ on $\mathbb{M}_n(\bC)_{\sa}^m$ and that $B_{\mathbb{M}_n(\bC)^m,\norm{\cdot}_\infty}(0,2C_1R) \cap  B_{\mathbb{M}_n(\bC)^m,\norm{\cdot}_1}(0,\varepsilon)$ is contained in the product of $m$ copies of $B_{\mathbb{M}_n(\bC),\norm{\cdot}_\infty}(0,2C_1R) \cap  B_{\mathbb{M}_n(\bC),\norm{\cdot}_1}(0,\varepsilon)$.  Then
\begin{multline*}
B_{\mathbb{M}_n(\bC)^d,\norm{\cdot}_2}(0,\delta) \cap D_f(X)^{-1}\left(B_{\mathbb{M}_n(\bC)^m,\norm{\cdot}_\infty}(0,C_1R) \cap  B_{\mathbb{M}_n(\bC)^m,\norm{\cdot}_1}(0,\varepsilon) \right) \\
 \subseteq \bigcup_{Y \in \Omega} B_{\mathbb{M}_n(\bC)^d,\norm{\cdot}_2}(0,\delta) \cap D_f(X)^{-1}(B_{\mathbb{M}_n(\bC)^m,\norm{\cdot}_2}(Y,\tfrac{\varepsilon}{2t})).
\end{multline*}
For each $Y \in \Omega$, if $B_{\mathbb{M}_n(\bC)^d,\norm{\cdot}_2}(0,\delta) \cap D_f(X)^{-1}(B_{\mathbb{M}_n(\bC)^m,\norm{\cdot}_2}(Y,\varepsilon/2t))$ is non-empty, then pick some $Z_Y$ in this set, so that
\begin{align*}
B_{\mathbb{M}_n(\bC)^d,\norm{\cdot}_2}(0,\delta) &\cap D_f(X)^{-1}(B_{\mathbb{M}_n(\bC)^m,\norm{\cdot}_2}(Y,\frac{\varepsilon}{2t})) \\
&\subseteq B_{\mathbb{M}_n(\bC)^d,\norm{\cdot}_2}(0,\delta) \cap D_f(X)^{-1}(B_{\mathbb{M}_n(\bC)^m,\norm{\cdot}_2}(D_f(X) \# Z_Y,\tfrac{\varepsilon}{2t})) \\
&\subseteq Z_Y + \left( B_{\mathbb{M}_n(\bC)^d,\norm{\cdot}_2}(0,2\delta) \cap D_f(X)^{-1}(B_{\mathbb{M}_n(\bC)^m,\norm{\cdot}_2}(0,\tfrac{\varepsilon}{t})) \right).
\end{align*}
By Lemma \ref{lemm: packing approx kernel},
\[
K_{\eta}\left(B_{\mathbb{M}_n(\bC)^d,\norm{\cdot}_2}(0,2\delta) \cap D_f(X)^{-1}(B_{\mathbb{M}_n(\bC)^m,\norm{\cdot}_2}(0,\tfrac{\varepsilon}{t})), \norm{\cdot}_2 \right) \leq \left( \frac{C_2 \delta}{\eta} \right)^{2dn^2 \mu_{|D_f(X)|}([0,2\varepsilon/t\eta])}.
\]
In particular,
\begin{multline*}
\frac{1}{n^2} \log K_{\eta}\Bigl(B_{\mathbb{M}_n(\bC)_{\sa}^d,\norm{\cdot}_\infty}(0,R) \cap B_{\mathbb{M}_n(\bC)_{\sa}^d,\norm{\cdot}_2}(0,\delta) \cap D_f(X)^{-1}(B_{\mathbb{M}_n(\bC)^m,\norm{\cdot}_1}(0,\varepsilon)), \norm{\cdot}_2 \Bigr) \\  \leq 2d \left( \sup_{X \in \Gamma^{(n)}(\cU)} \mu_{|D_f(X)|}([0,2\varepsilon/t\eta]) \right) \log \frac{C_2 \delta}{\eta} + \frac{m}{n^2} \log K_{\varepsilon/2mt}(B_{\mathbb{M}_n(\bC),\norm{\cdot}_\infty}(0,2C_1R) \cap  B_{\mathbb{M}_n(\bC),\norm{\cdot}_1}(0,\varepsilon), \norm{\cdot}_\infty)
\end{multline*}
By Lemma \ref{lemm: norm switch},
\[
\limsup_{n \to \infty} \frac{1}{n^2} \log K_{\varepsilon/2mt}(B_{\mathbb{M}_n(\bC),\norm{\cdot}_\infty}(0,2C_1R) \cap  B_{\mathbb{M}_n(\bC),\norm{\cdot}_1}(0,\varepsilon), \norm{\cdot}_\infty) \leq 12t \log \frac{C_3Rmt}{\varepsilon}
\]
Now observe that as $\mathcal{O}$ shrinks to $\{\ell_x\}$, the measures $\mu_{|D_f(X)|}$ for $X \in \Gamma^{(n)}(\mathcal{O})$ converge uniformly in distribution to $\mu_{|D_f(x)|}$ using Lemma \ref{lemm:weak*-weak* continuity}.  Thus, we have
\[
\limsup_{n \to \infty} \sup_{X \in \Gamma^{(n)}(\cU)} \mu_{|D_f(X)|}([0,2\varepsilon/t\eta]) \leq \mu_{D_f(x)}([0,2\varepsilon/t\eta]).
\]
Thus, when we take the $\limsup$ as $n \to \infty$, we obtain the assertion of the theorem.
\end{proof}

\subsection{Iteration of the estimates}

By combining Lemmas \ref{lemm: iterative kernel estimate} and \ref{lemm: psi estimate}, we obtain the following bounds.

\begin{cor}
Let $t \in (0,1/3]$ and $\eta \leq \varepsilon$ and $Rt \geq \varepsilon$.  Then
\begin{equation} \label{eq: shrinking estimate}
h_{R,\eta}(x) \leq h_{R,\varepsilon}(x) + \mu_{|D_f(x)|}\left( \left[0,\frac{C_1\varepsilon^2}{t\eta} \right] \right) \log \frac{C_2 \varepsilon}{\eta} + 12mt \log \frac{C_3Rmt}{\varepsilon^2}.
\end{equation}
In particular, if $\varepsilon$ is sufficiently small (depending on $R$ and $f$), we can take $\eta = \varepsilon^{4/3}$ and $t = \varepsilon^{1/3}$ to get
\begin{equation} \label{eq: iterative estimate}
h_{R,\varepsilon^{4/3}}(x,\norm{\cdot}_2) \leq h_{R,\varepsilon}(x,\norm{\cdot}_2) + \mu_{|D_{f}(x)|}([0,C_1 \varepsilon^{1/3}]) \log (C_2 \varepsilon^{-1/3}) + 12 m \varepsilon^{1/3} \log (C_3Rm \varepsilon^{-5/3}).
\end{equation}
\end{cor}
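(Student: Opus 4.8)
The plan is to compose Lemma~\ref{lemm: iterative kernel estimate} with Lemma~\ref{lemm: psi estimate}; everything else is bookkeeping of constants together with the choice of exponents in the second display. Lemma~\ref{lemm: iterative kernel estimate} produces a constant $C > 0$ depending only on $f$ and $R$ with
\[
h_{R,\eta}(x) \leq h_{R,\varepsilon}(x) + \Psi_{2R,\,\eta/2,\,2\varepsilon,\,C\varepsilon^2}(x,f) \qquad \text{for all } \varepsilon,\eta > 0,
\]
and I would then bound the $\Psi$-term using Lemma~\ref{lemm: psi estimate}, applying it with its four parameters $(R,\eta,\delta,\varepsilon)$ replaced by $(2R,\eta/2,2\varepsilon,C\varepsilon^2)$. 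The hypotheses of Lemma~\ref{lemm: psi estimate} then become $t \in (0,1/3]$ and $2Rt \geq C\varepsilon^2$; since we are assuming $Rt \geq \varepsilon$, the latter holds once $\varepsilon \leq 2/C$, so it costs nothing for $\varepsilon$ small. After the substitution and the simplifications $2(C\varepsilon^2)/(t\eta/2) = 4C\varepsilon^2/(t\eta)$ and $C_2(2\varepsilon)/(\eta/2) = 4C_2\varepsilon/\eta$, the lemma gives
\[
\Psi_{2R,\,\eta/2,\,2\varepsilon,\,C\varepsilon^2}(x,f) \leq \mu_{|D_f(x)|}\!\left(\left[0,\tfrac{4C\varepsilon^2}{t\eta}\right]\right)\log\frac{4C_2\varepsilon}{\eta} + 12mt\log\frac{2C_3Rmt}{C\varepsilon^2}.
\]
Relabelling $4C$, $4C_2$, $2C_3/C$ as new constants $C_1,C_2,C_3$ (still depending only on $f$ and $R$) and feeding this back into the first display yields exactly \eqref{eq: shrinking estimate}.

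For the ``in particular'' clause I would simply set $\eta = \varepsilon^{4/3}$ and $t = \varepsilon^{1/3}$ in \eqref{eq: shrinking estimate}, after first checking that the hypotheses hold for $\varepsilon$ small enough (depending on $R$ and $f$): indeed $t = \varepsilon^{1/3} \leq 1/3$, $\eta = \varepsilon^{4/3} \leq \varepsilon$, $Rt = R\varepsilon^{1/3} \geq \varepsilon$, and $\varepsilon \leq 2/C$, all for $\varepsilon$ small. Then $\varepsilon^2/(t\eta) = \varepsilon^2/\varepsilon^{5/3} = \varepsilon^{1/3}$, $\varepsilon/\eta = \varepsilon^{-1/3}$, and $Rmt/\varepsilon^2 = Rm\varepsilon^{-5/3}$, which turns \eqref{eq: shrinking estimate} into \eqref{eq: iterative estimate}.

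I do not expect a real obstacle here, since the mathematics is carried entirely by the two preceding lemmas; the only issue is care with constants. The one point worth highlighting is \emph{why} the exponents $4/3$ and $1/3$ are the natural ones. Because the non-commutative Taylor remainder in Lemma~\ref{lemm: Taylor approximation} forces the $\norm{\cdot}_1$-approximate-kernel radius to scale like $\varepsilon^2$ rather than $\varepsilon$, one is led to couple the contraction $\varepsilon \mapsto \eta$ with an auxiliary scale $t$, and the power law $\eta = \varepsilon^{4/3}$, $t = \varepsilon^{1/3}$ is, up to constants, the choice that simultaneously keeps $t \leq 1/3$, keeps $\eta \leq \varepsilon$, preserves $Rt \geq \varepsilon$, and makes both error terms summable along the iteration $\varepsilon_{k+1} = \varepsilon_k^{4/3}$: the term $12m\varepsilon_k^{1/3}\log(C_3Rm\varepsilon_k^{-5/3})$ decays geometrically, while $\mu_{|D_f(x)|}([0,C_1\varepsilon_k^{1/3}])\log(C_2\varepsilon_k^{-1/3})$ is summable precisely because of the integrability hypothesis \eqref{eq: integrable} on $\int |\log t|\,d\mu_{|D_f|}(t)$. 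This summability is of course what the subsequent iteration argument will exploit to conclude $h_R(x) < \infty$.
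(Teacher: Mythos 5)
Your proposal is correct and is exactly the paper's (implicit) argument: the corollary is stated there as an immediate combination of Lemma \ref{lemm: iterative kernel estimate} and Lemma \ref{lemm: psi estimate}, with the constants relabelled and the choice $\eta = \varepsilon^{4/3}$, $t = \varepsilon^{1/3}$ substituted in. Your extra care in checking that the hypothesis $2Rt \geq C\varepsilon^2$ of Lemma \ref{lemm: psi estimate} follows from $Rt \geq \varepsilon$ for small $\varepsilon$ is a legitimate bookkeeping point the paper glosses over, and your closing remark about why these exponents make the error terms summable matches the role the estimate plays in the subsequent iteration.
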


\begin{proof}[Proof of Theorem \ref{thm: von Neumann algebraic}]
Fix some $\varepsilon$ sufficiently small that we can apply \eqref{eq: iterative estimate}.  By repeated application of that estimate,
\begin{multline*}
h_{R,\varepsilon^{4^k/3^k}}(x) \\
\leq h_{R,\varepsilon}(x) + \sum_{j=0}^{k-1} \left( \mu_{|D_f(x)|}([0,C_1 \varepsilon^{4^j/3^{j+1}}]) \log (C_2 \varepsilon^{-4^j/3^{j+1}}) + 12m \varepsilon^{4^j/3^{j+1}} \log (C_3Rm \varepsilon^{-5\cdot 4^j/3^{j+1}}) \right).
\end{multline*}
Recall that $h_{R,\eta}(x,\norm{\cdot}_2)$ decreases to $h(x)$ as $\eta \to 0$.  Thus,
\[
h(M) = h_R(x) \leq h_{R,\varepsilon}(x) + \sum_{j=0}^\infty \left( \mu_{|D_f(x)|}([0,C_1 \varepsilon^{4^j/3^{j+1}}]) \log (C_2 \varepsilon^{-4^j/3^{j+1}}) + 12m \varepsilon^{4^j/3^{j+1}} \log (C_3Rm \varepsilon^{-5\cdot 4^j/3^{j+1}}) \right).
\]
Of course, because $\Gamma_R^{(n)}(\mathcal{O})$ is always contained in $B_{\mathbb{M}_n(\bC)^d,\norm{\cdot}_2}(0,R)$, the first term $h_{R,\varepsilon}(x)$ is automatically finite.  The summability in $j$ of the term $12m \varepsilon^{4^j/3^{j+1}} \log (C_3Rm \varepsilon^{-5\cdot 4^j/3^{j+1}})$ in the series is straightforward: $t \log (1/t^5)$ is bounded by a constant times $t^{3/2}$, hence we can estimate the terms by a constant times $\varepsilon^{4^j/3^j}$ which is in turn bounded by a geometric series.  Thus, to complete the argument, it suffices to show the summability of the first term.  We rewrite
\[
\sum_{j=0}^\infty \mu_{|D_f(x)|}([0,C_1 \varepsilon^{4^j/3^{j+1}}]) \log (C_2 \varepsilon^{-4^j/3^{j+1}}) = \int_0^\infty \phi(t)\,d\mu_{|D_f(x)|}(t),
\]
where
\[
\phi(t) = \sum_{j=0}^\infty \log (C_2 \varepsilon^{-4^j/3^{j+1}}) \, 1_{[0,C_1 \varepsilon^{4^j/3^{j+1}}]}(t).
\]
We claim that $\phi(t) \leq A + B \log (1/t)$ for some constants $A$ and $B$ (depending on $\varepsilon$ and all the parameters in the theorem), and this claim is sufficient to complete the proof because $\mu_{|D_f(x)|}$ is a compactly supported probability measure and we assumed that $\int_0^\infty \log (1/t) \,d\mu_{|D_f(x)|}(t) < \infty$.  For every $t \in [0,C_1\varepsilon)$, there exists a unique $k \in \bN$ such that
\[
C_1 \varepsilon^{4^k/3^{k+1}} < t \leq C_1 \varepsilon^{4^{k-1}/3^k}.
\]
Then
\begin{align*}
\phi(t) &= \sum_{j=0}^{k-1} \log (C_2 \varepsilon^{-4^j/3^{j+1}}) \\
&= \sum_{j=0}^{k-1} \left( \log C_2 + \frac{4^j}{3^{j+1}} \log \frac{1}{\varepsilon} \right) \\
&\leq \left( \log C_2 + \frac{1}{3} \log \frac{1}{\varepsilon} \right) \sum_{j=0}^{k-1} \left( \frac{4}{3} \right)^j \\
&\leq 12 \left( \log C_2 + \frac{1}{3} \log \frac{1}{\varepsilon} \right) \frac{4^{k-1}}{3^k} \\
&\leq 12 \left( \log C_2 + \frac{1}{3} \log \frac{1}{\varepsilon} \right) \frac{\log (1/t) + \log C_1}{\log (1/\varepsilon)} \\
&= A + B \log \frac{1}{t},
\end{align*}
for some constants $A$ and $B$.
\end{proof}

\textbf{Remark.}
Given the apparent freedom to choose parameters in \eqref{eq: shrinking estimate}, one might wonder whether it is possible to improve the argument to allow a weaker hypothesis on $\mu_{|D_f(x)|}$ than integrability of the logarithm.  But in fact, this hypothesis is necessary for any argument based on \eqref{eq: shrinking estimate} to bound $h(x)$.  Indeed, suppose we choose a sequence $\varepsilon_k$ decreasing to zero and $t_k \in (\varepsilon_k/R, 1/3)$ and suppose that
\[
\sum_{k=0}^\infty \mu_{|D_f(x)|}\left( \left[0,\frac{C_1\varepsilon_n^2}{t_k\varepsilon_{k+1}} \right] \right) \log \frac{C_2 \varepsilon_k}{\varepsilon_{k+1}} < \infty.
\]
Since $\varepsilon_k$ is decreasing and $t_k \leq 1/3$, we have $C_1 \varepsilon_k^2 / t_k \varepsilon_{n+1} \geq 3C_2 \varepsilon_k$.  Since $\varepsilon_k < 1$, we have $\log (C_2 \varepsilon_k / \varepsilon_{k+1}) \geq \log (C_2 / \varepsilon_{k+1})$.  Hence,
\begin{align*}
\mu_{|D_f(x)|}\left( \left[0,\frac{C_1\varepsilon_k^2}{t_k\varepsilon_{k+1}} \right] \right) \log \frac{C_2 \varepsilon_k}{\varepsilon_{k+1}}
&\geq \mu_{|D_f(x)|}\left( \left(3C_1 \varepsilon_{k+1},3C_1 \varepsilon_k \right] \right) \log (C_2 / \varepsilon_{k+1}) \\
&\geq \int_{(3C_1\varepsilon_{k+1},3C_1 \varepsilon_k]} \log (3C_1 C_2/t)\,d\mu_{|D_f(x)|}(t).
\end{align*}
Hence, if the sum converges, then $\int_0^1 \log(1/t)\,\mu_{|D_f(x)|}(t) < \infty$.

\textbf{Remark.} \label{rem:tracesmooth}
Although we have stated Theorem \ref{thm: von Neumann algebraic} only for polynomial $f$ for simplicity, the same argument works for more general non-commutative functions.  Indeed, it only requires that $f$ has a Taylor expansion and error estimate as in Lemma \ref{lemm: Taylor approximation} and that the spectral measure of $|\partial f|$ is the large-$n$ limit of the spectral measures of corresponding operators on $M_n(\bC)^d$ as in Lemma \ref{lemm:weak*-weak* continuity}.  This holds for instance if $f$ is given by a non-commutative power series with radius of convergence $R' > R$ as in \cite[Section 3.3]{VoiculescuFreeEntropy2}.  More generally, it applies to the non-commutative $C^2$ functions of \cite{JLS2021} (as well as those of \cite{DGS2016}).  Roughly speaking, \cite[\S 3.2]{JLS2021} defines a space $C_{\tr}^k(\bR^{*d})$ consisting of functions $f$ that can be evaluated on self-adjoint $d$-tuples $(x_1,\dots,x_d)$ from every tracial von Neumann algebra $(M,\tau)$, such that $f$ is a Fr\'echet $C^k$ map $M_{\sa}^d \to M$, and the Fr\'echet derivatives of order $j \leq k$, viewed as multilinear maps $(M_{\sa}^d)^j \to M$, satisfy
\[
\norm{\partial^j f(x)[y_1,\dots,y_j]}_p \leq \text{constant}(f,j,R) \norm{y_1}_{p_1} \dots \norm{y_j}_{p_j}
\]
whenever $1/p = 1/p_1 + \dots + 1/p_j$ and $\norm{x}_\infty \leq R$, and such that trace polynomials are dense.  In particular, the space is cooked up so that Taylor expansions with error estimates inspired by the non-commutative H\"older's inequality, such as Lemma \ref{lemm: Taylor approximation}, will hold.  Furthermore, \cite[\S 4.4]{JLS2021} describes a trace (as well as a log-determinant for invertible elements) on the algebra $C_{\tr}^{k-1}(\bR^{*d},\mathscr{M}^1)$ in which the first derivatives $\partial_j f$ of a trace $C^k$ function $f$ live.  Extending this trace to $d \times d$ matrices over $C_{\tr}^{k-1}(\bR^{*d},\mathscr{M}^1)$ enables us to make sense of the spectral measure of $\partial f(x)^* \partial f(x)$.  This also applies to the operator $D_f f(x)$ in Theorem \ref{thm: von Neumann algebraic} since the $t_j \otimes 1 - 1 \otimes t_j$ defines an element of $C_{\tr}^{k-1}(\bR^{*d},\mathscr{M}^1(\bR^{*1}))$ for each $j$.  Furthermore, thanks to the way that the trace on $C_{\tr}^{k-1}(\bR^{*d},\mathscr{M}^1)$ describes the asymptotic behavior of traces on matrices (see \cite[\S 4.5]{JLS2021}), Lemma \ref{lemm:weak*-weak* continuity} generalizes to this setting.  Hence, \emph{mutatis mutandis} Theorem \ref{thm: von Neumann algebraic} generalizes to $f \in C_{\tr}^2(\bR^{*d})^m$.

We have now completed the proof of Theorem \ref{thm: von Neumann algebraic}. We refer the reader to \S \ref{sec: proof of Betti} for a proof that Theorem \ref{thm: von Neumann algebraic} implies Theorem \ref{thm: vanishing L2 Betti}.

\section{Connections to $L^{2}$-invariants of sofic groups} \label{sec: derivations}

In this section, we recall the connection between $\ell^2$ cohomology and the non-commutative difference quotient (\S \ref{sec: cocycles}) exploited by Shlyakhtenko \cite{Shl2015} as well as his argument why Theorem \ref{thm: von Neumann algebraic} implies Theorem \ref{thm: vanishing L2 Betti} (\S \ref{sec: proof of Betti}).   Then we show how the argument for Theorem \ref{thm: vanishing L2 Betti}, together with Shalom's result \cite{ShalomCohomolgyCharacterize}, furnishes an alternative proof of strong $1$-boundedness for the von Neumann algebras associated to sofic Property (T) groups (\S \ref{sec:PropTsofic}).

\subsection{Cocycles, derivations, and the free difference quotient} \label{sec: cocycles}

This subsection describes how to translate from group cohomology to derivations on the group algebra to the kernel of the free difference quotient $\partial f$ for a function $f$ associated to a group presentation, following \cite{ConnesShl,MinShl,ThomL2cohom,Shl2015}.

For a $*$-algebra $A$ and an $A$-$A$ bimodule $\mathcal{H},$ let $\Der(A,\mathcal{H})$ denote the set of derivations $\delta\colon A\to \mathcal{H}$.  If $(M,\tau)$ is a tracial von Neumann algebra, and $A\subseteq M$ is a weak$^{*}$-dense $*$-subalgebra, then one bimodule of interest is $L^{2}(M)\otimes L^{2}(M)$, where $A$ acts on the left by left multiplying by $a\otimes 1$ and on the right by right multiplying by $1\otimes a$.  We have a commuting action of $M \overline{\otimes} M^{\op}$ on $L^{2}(M)\otimes L^{2}(M)$ where $a\otimes b^{\op}$ acts on $c\otimes d$ by sending it to $cb\otimes ad$. We use $\#_{\operatorname{in}}$ for this action, so
\[
(a\otimes b^{\op})\#_{\operatorname{in}}(\xi)=(1\otimes a)\xi (b\otimes 1),
\]
it is straightforward to verify that this action extends to a normal representation of $M\overline{\otimes}M^{\op}$ on $L^{2}(M)\otimes L^{2}(M).$ Moreover, for all $x\in M\overline{\otimes}M^{\op},$ all $a,b\in M,$ and all $\xi\in L^{2}(M)\otimes L^{2}(M),$
\[
x\#_{\operatorname{in}}((a\otimes 1)\xi(1\otimes b))=(a\otimes 1)(x\#_{\operatorname{in}}\xi)(1\otimes b).
\]
This produces an action of $M \overline{\otimes} M^{\op}$ on $\Der(A,L^{2}(M)\otimes L^{2}(M))$ by
\[
(x\delta)(a) = x \#_{\operatorname{in}}(\delta(a))
\]
for all $x\in M\overline{\otimes}M^{\op},a\in A.$
So we may regard $\Der(A,L^{2}(M)\otimes L^{2}(M))$ as a module over $M \overline{\otimes} M^{\op}$, and so it makes sense by \cite{Luck1} to consider
\[
\dim_{M \overline{\otimes} M^{\op}}(\Der(A,L^{2}(M)\otimes L^{2}(M))).
\]
We have a special class of derivatives called the \emph{inner derivations}. We say that $\delta$ is inner if there is a $\xi\in L^{2}(M)\otimes L^{2}(M)$ with $\delta(a)=[a,\xi].$ We let $\textnormal{Inn}(A,L^{2}(M)\otimes L^{2}(M))$ be the inner derivations, and let
\[H^{1}(A,\tau)=\frac{\Der(A,L^{2}(M)\otimes L^{2}(M))}{\textnormal{Inn}(A,L^{2}(M)\otimes L^{2}(M)}.\]
We define the first $L^{2}$-Betti number of $A$ by
\[\beta^{1}_{(2)}(A,\tau)=\dim_{M\overline{\otimes}M^{\op}}(H^{1}(A,\tau))\]
This definition is due to Connes-Shlyakhtenko \cite{ConnesShl}.

\begin{prop}\label{prop: switching from groups to algebras}
Let $G$ be a countable, discrete group, let $\tau$ be the canonical trace, and set $M = L(G).$ Then
\begin{enumerate}[(i)]
    \item $\beta^{1}_{(2)}(G)=\beta^{1}_{(2)}(\bC[G],\tau).$ In particular, if $G$ is infinite, then
    \[ \beta^{1}_{(2)}(G)+1=\dim_{M\overline{\otimes}M^{\op}}(\Der(\bC[G],L^{2}(M)\otimes L^{2}(M)))\]
    \label{item: L2 betti in terms of deriv}
    \item Suppose that $G$ is finitely generated, and suppose $g_{1},\cdots,g_{r}$ is a finite generating set. Set
    \[x=(\textnormal{Re}(g_{1}),\textnormal{Im}(g_{1}),\textnormal{Re}(g_{2}),\textnormal{Im}(g_{2}),\cdots,\textnormal{Re}(g_{k}),\textnormal{Im}(g_{k}))\in (\bC[G]_{\sa})^{2r}.\]
    where $\textnormal{Re}(a)=\frac{a+a^{*}}{2},\textnormal{Im}(a)=\frac{a-a^{*}}{2i}$ for all $a\in \bC[G].$ Let $J$ be the kernel of the homomorphism
    \[\ev_{x}\colon \bC\ang{t_{1},\cdots,t_{2r}}\to \bC[G].\]
    Then $G$ is finitely presented if and only if $J$ is finitely generated as a two-sided ideal. \label{I:finite presentation in terms of *-alg}
\end{enumerate}
\end{prop}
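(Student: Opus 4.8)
These two statements are independent, and I would prove them separately.

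\emph{Part (i).} The plan is to run the standard translation between derivations on $\bC[G]$ and group $1$-cocycles. A derivation $\delta\colon\bC[G]\to L^2(M)\otimes L^2(M)$ is determined by the values $\delta(g)$, $g\in G$; turning the coarse bimodule into a left $\bC[G]$-module via the conjugation action $g\cdot\xi=(g\otimes1)\,\xi\,(1\otimes g^{-1})$ coming from the $\bC[G]$-bimodule structure, the twisted map $b(g):=\delta(g)\,(1\otimes g^{-1})$ is precisely a $1$-cocycle, and $\delta$ is inner iff $b$ is a coboundary; since $\#_{\operatorname{in}}$ commutes with this bimodule structure, the resulting identifications $\Der(\bC[G],L^2(M)\otimes L^2(M))\cong Z^1(G,L^2(M)\otimes L^2(M))$ and $H^1(\bC[G],\tau)\cong H^1(G,L^2(M)\otimes L^2(M))$ (conjugation coefficients) are $M\overline{\otimes}M^{\op}$-equivariant. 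Identifying the Lück dimension of the latter with $\beta^1_{(2)}(G)$ is the Connes--Shlyakhtenko comparison of the algebraic and classical $\ell^2$-Betti numbers \cite{ConnesShl,ThomL2cohom}, which I would quote; this gives $\beta^1_{(2)}(\bC[G],\tau)=\beta^1_{(2)}(G)$. For the ``in particular'' clause, apply the additive dimension function to the short exact sequence $0\to\Inn\to\Der\to H^1(\bC[G],\tau)\to0$ of $M\overline{\otimes}M^{\op}$-modules \cite{Luck1}, obtaining $\dim_{M\overline{\otimes}M^{\op}}\Der=\dim_{M\overline{\otimes}M^{\op}}\Inn+\beta^1_{(2)}(G)$, and then show $\dim_{M\overline{\otimes}M^{\op}}\Inn=1$: the map $\xi\mapsto(a\mapsto[a,\xi])$ is an $M\overline{\otimes}M^{\op}$-equivariant surjection of $L^2(M)\otimes L^2(M)$ onto $\Inn$ (equivariance being exactly the recalled identity $x\#_{\operatorname{in}}((a\otimes1)\xi(1\otimes b))=(a\otimes1)(x\#_{\operatorname{in}}\xi)(1\otimes b)$), whose kernel is the space of $M$-central vectors of the coarse bimodule; since $G$ is infinite, $M=L(G)$ is diffuse and this kernel is $\{0\}$, so $\Inn\cong L^2(M)\otimes L^2(M)$, which via $c\otimes d\mapsto d\otimes c^{\op}$ is the standard $M\overline{\otimes}M^{\op}$-module and hence has dimension $1$.

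\emph{Part (ii).} The plan is to factor $\ev_x$ through the group algebra of the free group. Inside $\bC\ang{t_1,\dots,t_{2r}}$ put $u_j=t_{2j-1}+\sqrt{-1}\,t_{2j}$ and $v_j=t_{2j-1}-\sqrt{-1}\,t_{2j}$; since $\{t_j\}\leftrightarrow\{u_j,v_j\}$ is an invertible linear change of variables, $\bC\ang{t_1,\dots,t_{2r}}\cong\bC\ang{u_1,v_1,\dots,u_r,v_r}$, and quotienting by the two-sided ideal $\mathfrak b$ generated by the $2r$ elements $u_jv_j-1$ and $v_ju_j-1$ yields $\bC[\bF_r]$ with $\bF_r=\ang{g_1,\dots,g_r}$ free and $u_j\mapsto g_j$, $v_j\mapsto g_j^{-1}$. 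Then $\ev_x=q\circ\rho$, where $\rho\colon\bC\ang{t_1,\dots,t_{2r}}\twoheadrightarrow\bC[\bF_r]$ is this quotient and $q\colon\bC[\bF_r]\twoheadrightarrow\bC[G]$ is induced by $\bF_r\twoheadrightarrow G$ (one checks $\rho(t_{2j-1})=\Re(g_j)$, $\rho(t_{2j})=\Im(g_j)$ and applies $q$). Consequently $J=\rho^{-1}(\ker q)$ and $\ker\rho=\mathfrak b$ is finitely generated as a two-sided ideal, so $J$ is finitely generated as a two-sided ideal if and only if $\ker q$ is.

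It then remains to prove the following, with $N=\ker(\bF_r\to G)$: the two-sided ideal $\ker q$ is finitely generated if and only if $N$ is finitely generated as a normal subgroup of $\bF_r$. For the ``if'' direction, the identities $w_1w_2-1=w_1(w_2-1)+(w_1-1)$, $w^{-1}-1=-w^{-1}(w-1)$ and $gwg^{-1}-1=g(w-1)g^{-1}$ show that the two-sided ideal generated by $\{w-1:w\in S_0\}$ contains $\{w-1:w\in\langle\langle S_0\rangle\rangle\}$, and the coset decomposition $\bC[\bF_r]=\bigoplus_{gN\in\bF_r/N}\bC[gN]$ (on each summand of which $q$ restricts to the augmentation) shows this ideal equals $\ker q$ whenever $\langle\langle S_0\rangle\rangle=N$; taking $S_0$ finite then finitely generates $\ker q$. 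For ``only if'', write each of finitely many two-sided ideal generators of $\ker q$, using those identities, as a $\bC[\bF_r]$-combination of finitely many elements $w-1$ with $w\in N$; the resulting finite set $S_0\subseteq N$ generates the same ideal $\ker q$, so $\bC[\bF_r/\langle\langle S_0\rangle\rangle]$ and $\bC[\bF_r/N]$ are quotients of $\bC[\bF_r]$ with the same kernel, which forces $\langle\langle S_0\rangle\rangle=N$ because group elements are linearly independent in their group algebras. Chaining: $G$ is finitely presented $\iff$ $N$ is finitely normally generated $\iff$ $\ker q$ is a finitely generated two-sided ideal $\iff$ $J$ is a finitely generated two-sided ideal.

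I expect the main obstacle to be the ``only if'' direction of this last equivalence --- producing a finite \emph{normal} generating set of $N$ from finitely many \emph{two-sided ideal} generators of $\ker q$ and certifying that it really normally generates $N$ --- whereas in Part (i) the substantive input, the Connes--Shlyakhtenko comparison, is quoted, so the only genuine task there is keeping the $\bC[G]$-bimodule and $M\overline{\otimes}M^{\op}$-module structures straight.
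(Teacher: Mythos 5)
Your proposal is correct and takes essentially the same route as the paper: for (i) the substantive identification is quoted from \cite{ConnesShl,MinShl,ThomL2cohom} just as the paper does (you additionally spell out the cocycle dictionary and the $\dim_{M\overline{\otimes}M^{\op}}(\Inn)=1$ count), and for (ii) you factor $\ev_{x}$ through $\bC[\bF_{r}]$ exactly as in the paper, your ideal generated by $u_{j}v_{j}-1,\,v_{j}u_{j}-1$ being literally the same as the paper's ideal generated by $[t_{2j-1},t_{2j}]$ and $t_{2j-1}^{2}+t_{2j}^{2}-1$. The equivalence between finite normal generation of $\ker(\bF_{r}\to G)$ and finite two-sided-ideal generation of $\ker(\bC[\bF_{r}]\to\bC[G])$ via the elements $w-1$ is also the paper's argument, which it partly leaves as an exercise and you carry out in detail.
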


\begin{proof}
(\ref{item: L2 betti in terms of deriv}): This is \cite[Proposition 2.3]{ConnesShl},\cite[Corollary 3.6]{MinShl}, \cite[Section 4]{ThomL2cohom}.

 (\ref{I:finite presentation in terms of *-alg}):
Let $\bF_{r}$ be the free group on letters $a_{1},\cdots,a_{r}.$ Consider the surjective homomorphism $q\colon \bF_{r}\to G$ so that $q(a_{j})=g_{j},$ we continue to use $q$ to denote the linear extension $q\colon \bC[\bF_{r}]\to \bC[G].$ Let
\[y=(\textnormal{Re}(a_{1}),\textnormal{Im}(a_{1}),\textnormal{Re}(a_{2}),\textnormal{Im}(a_{2}),\cdots,\textnormal{Re}(a_{k}),\textnormal{Im}(a_{k}))\in (\bC[\bF_{r}])_{\sa}^{2r},\]
so $\ev_{x}=q\circ \ev_{y}.$ Let $B$ be the ideal in $\bC\ang{t_{1},\cdots,t_{2k}}$ generated by $\{[t_{2j-1},t_{2j}]:j=1,\cdots,k\}\cup\{t_{2j-1}^{2}+t_{2j}^{2}-1:j=1,\cdots,k\},$ and let
\[\pi\colon \bC\ang{t_{1},\cdots,t_{k}}\to \bC\ang{t_{1},\cdots,t_{k}}/B\]
be the quotient map. Then the kernel of $\ev_{y}$ contains $\pi$ and so $\ev_{y}$ descends to a map
\[\overline{\ev}_{y}\colon \bC\ang{t_{1},\cdots,t_{2k}}/B\to \bC[\bF_{r}]\]
with $\ev_{y}=\overline{\ev}_{y}\circ \pi.$ For every $1\leq j\leq k,$ the element $u_{j}=t_{2j-1}+it_{2j}+B\in \bC\ang{t_{1},\cdots,t_{2k}}/B$ is unitary, and so there is a unique map $\phi\colon \bC[\bF_{r}]\to \bC\ang{t_{1},\cdots,t_{2k}}/B$ which satisfies $\phi(a_{j})=u_{j}.$ Routine calculations verify that $\phi,\overline{\ev}_{y}$ are mutual inverses to each other, and so $\phi \circ \ev_{y}=\pi.$

First suppose that $G$ is finitely presented, and let $F$ be a finite subset of the kernel of $q: \bF_r \to G$ so that $\ker(q)$ is the smallest normal subgroup containing $F.$ It is direct to verify that the kernel of $q: \bC[\bF_r] \to \bC[G]$ is the smallest ideal in $\bC[\bF_{r}]$ containing $\{w-1:w\in F\}.$ For $w\in F,$ let $Q_{w}\in \bC\ang{t_{1},\cdots,t_{2k}}$ be any element so that $\pi(Q_{w})=\phi(w).$ We leave it as an exercise to show that $J$ is generated as a two-sided ideal by
\[\{Q_{w}-1:w\in F\}\cup \{[t_{2j-1},t_{2j}]:j=1,\cdots,k\}\cup\{t_{2j-1}^{2}+t_{2j}^{2}-1:j=1\}.\]
This shows that $J$ is finitely generated as a two-sided ideal.

Now suppose that $J$ is finitely generated as a two-sided ideal, say by $F_{1},\cdots,F_{k}.$ Set $N=\ker(q\colon G\to \bF_{r}),$ and $Q_{j}=\ev_{y}(F_{j}).$ Then $\pi(F_{1}),\cdots,\pi(F_{k})$ generate $\ker(q\circ \overline{\ev}_{y})$ as a two-sided ideal. Since $\overline{\ev}_{y}$ is an isomorphism, it follows that $Q_{1},\cdots,Q_{k}$ generate $\ker(q\colon \bC[\bF_{r}]\to \bC[G])$ as a two-sided ideal. Observe that $\ker(q\colon \bC[\bF_{r}]\to \bC[G])$ is generated as a two-sided ideal by $\{x-1:x\in N\}.$ So for $j=1,\cdots,k$ we can find a finite $F_{j}\subseteq N$ so that $Q_{j}$ is in the two-sided ideal generated by $\{x-1:x\in F_{j}\}.$ Let $F=\bigcup_{j=1}^{k}F_{j},$ and let $I$ be the two-sided in $\bC[\bF_{r}]$ generated by $\{x-1:x\in F\}.$
Then $Q_{j}\in I$ for all $j,$ and so $I=\ker(q\colon \bC[\bF_{r}]\to \bC[G]).$ If $\widetilde{N}$ is the normal subgroup of $G$ generated by $R,$ then $I$ is the kernel of the natural quotient map
\[\bC[\bF_{r}]\to \bC[\bF_{r}/\widetilde{N}].\]
But $\widetilde{N}\leq N,$ and we saw above that $I$ is the kernel of the natural quotient map
\[
\bC[\bF_{r}]\to \bC[\bF_{r}/N].
\]
So $N = \widetilde{N}$, and this establishes that $G$ is finitely presented.
\end{proof}

The following may be argued exactly as in \cite[Lemma 3.1]{Shl2015}.

\begin{prop}\label{prop: derivations are a variety}
Let $(M,\tau)$ be a tracial von Neumann algebra and let $x\in M_{\sa}^{k}$ be such that $W^{*}(x)=M$.  Let $A$ be the $*$-algebra generated by $x,$ and let $J$ be the kernel of $\ev_{x}\colon \bC\ang{t_{1},\cdots,t_{k}}\to A.$ Suppose that $(F_{j})_{j=1}^{\infty}$ is a sequence which generates $J$ as a two-sided ideal in $\bC\ang{t_{1},\cdots,t_{k}}$. Then the map
\[\delta\mapsto (\delta(x_{j}))_{j=1}^{k}\]
is an $M$-$M$ bimodular isomorphism
\[
\Der(A,L^{2}(M)\otimes L^{2}(M))\to \bigcap_{j=1}^{\infty}\ker((\partial F_{j})(x)\#).
\]
\end{prop}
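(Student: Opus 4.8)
The plan is to factor the asserted map through the free algebra $P := \bC\langle t_1,\dots,t_k\rangle$, exploiting the universal property that a derivation out of a free algebra is freely and uniquely determined by its values on the generators. Equip $\cH := L^2(M) \otimes L^2(M)$ with the $P$-$P$ bimodule structure pulled back along $\ev_x$ from the $A$-$A$ (indeed $M$-$M$) bimodule structure, i.e. $p \cdot (\eta \otimes \zeta) \cdot q = \ev_x(p)\eta \otimes \zeta\,\ev_x(q)$; this is the ``outer'' action implicit in the $\#$ appearing in the statement. First I would record the elementary fact that $\delta \mapsto \delta \circ \ev_x$ is a bijection from $\Der(A,\cH)$ onto $\{\partial \in \Der(P,\cH) : \partial|_J = 0\}$: if $\delta$ is a derivation then so is $\delta \circ \ev_x$ (Leibniz rule, with the bimodule structure transported along $\ev_x$), and it annihilates $J$ because $\ev_x(J) = 0$; conversely any $\partial \in \Der(P,\cH)$ with $\partial|_J = 0$ descends through $A = P/J$ to a derivation $\delta$ with $\delta \circ \ev_x = \partial$, and the two operations are mutually inverse. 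This is a direct computation.

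Next I would use that evaluation on generators, $\partial \mapsto (\partial(t_j))_{j=1}^k$, is a linear isomorphism $\Der(P,\cH) \xrightarrow{\sim} \cH^k$: every tuple $(\xi_j)_j$ arises from a unique derivation on $P$ with $t_j \mapsto \xi_j$, and expanding the Leibniz rule on a monomial shows this derivation is $f \mapsto \sum_{j=1}^k (\partial_j f)(x) \# \xi_j$, where $\partial_j$ is Voiculescu's free difference quotient from \S\ref{subsec: DQ} and $\#$ is the outer action of $M \otimes M^{\op}$ on $\cH$. Composing with the previous paragraph, $\delta \mapsto (\delta(x_j))_j = (\,(\delta \circ \ev_x)(t_j)\,)_j$ is a linear bijection from $\Der(A,\cH)$ onto the image inside $\cH^k$ of $\{\partial \in \Der(P,\cH) : \partial|_J = 0\}$ under evaluation on generators, so it remains only to identify that image.

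Here I would use that $J$ is the two-sided ideal generated by $\{F_j\}_{j \geq 1}$, hence linearly spanned by elements $p F_j q$ with $p, q \in P$. For a derivation $\partial$ on $P$ one has $\partial(p F_j q) = \partial(p)\,\ev_x(F_j q) + \ev_x(p)\,\partial(F_j)\,\ev_x(q) + \ev_x(p F_j)\,\partial(q)$, and since $F_j \in J = \ker \ev_x$ the first and third terms vanish; therefore $\partial|_J = 0$ if and only if $\partial(F_j) = 0$ for every $j$. Under the identification above, $\partial(F_j) = \sum_{l=1}^k (\partial_l F_j)(x) \# \xi_l = (\partial F_j)(x) \# \xi$ where $\xi = (\xi_l)_l$ is the tuple corresponding to $\partial$, so $\partial|_J = 0$ exactly when $\xi \in \bigcap_{j=1}^\infty \ker((\partial F_j)(x)\#)$. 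This is the claimed bijection.

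Finally, for bimodularity: both $\Der(A,\cH)$ and the subspace $\bigcap_j \ker((\partial F_j)(x)\#) \subseteq \cH^k$ carry the $M \overline{\otimes} M^{\op}$-action through $\#_{\operatorname{in}}$ (coordinatewise on $\cH^k$), and since $(x\delta)(x_j) = x \#_{\operatorname{in}} (\delta(x_j))$, the map is tautologically $M \overline{\otimes} M^{\op}$-equivariant, in particular $M$-$M$ bimodular; one checks en route that this action preserves the kernel subspace because $(\partial F_j)(x)\#$ is assembled from the outer action, which commutes with $\#_{\operatorname{in}}$ via the identity $x \#_{\operatorname{in}} ((a \otimes 1)\xi(1 \otimes b)) = (a \otimes 1)(x \#_{\operatorname{in}} \xi)(1 \otimes b)$ recorded in \S\ref{sec: cocycles}, so that $(\partial F_j)(x) \# (x \#_{\operatorname{in}} \xi) = x \#_{\operatorname{in}} ((\partial F_j)(x) \# \xi)$. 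I do not expect a serious obstacle: the only points needing care are the bookkeeping of the two distinct bimodule structures on $L^2(M) \otimes L^2(M)$ --- the ``outer'' one via $\ev_x$ that governs the derivations and the ``inner'' $\#_{\operatorname{in}}$ one supplying the $M \overline{\otimes} M^{\op}$-module structure --- together with the verification of the identity $\partial(F_j) = (\partial F_j)(x) \# \xi$, which is precisely what converts ``$\partial$ annihilates the ideal generators'' into ``$\xi$ lies in the intersection of the kernels.''
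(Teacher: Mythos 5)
Your argument is correct and is essentially the argument the paper has in mind: the paper gives no proof of Proposition \ref{prop: derivations are a variety} but defers to \cite[Lemma 3.1]{Shl2015}, whose proof proceeds exactly as you do, by identifying $\Der(A,L^2(M)\otimes L^2(M))$ with derivations on the free algebra killing $J$, noting that killing $J$ is equivalent to killing the generators $F_j$ since $\ev_x(F_j)=0$, and using $\partial(F_j)=(\partial F_j)(x)\#\xi$ together with the commutation of the outer action with $\#_{\operatorname{in}}$ for bimodularity.
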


\subsection{Strong $1$-boundedness from vanishing $\ell^2$-Betti numbers} \label{sec: proof of Betti}
For this subsection, we need the following notation. Given a group $G$, we view $\bC(G)\subseteq L(G)$ by sending $\sum_{g}a_{g}g\to \sum_{g}a_{g}\lambda(g)$. This induces natural inclusion $\bM_{m,n}(\bC(G))\subseteq \bM_{m,n}(L(G))$. Given $A\in \bM_{m,n}(\bC(G))$, we let $\mu_{|A|}$ be the spectral measure of $(A^{*}A)^{1/2}$ with respect to the trace $\Tr\otimes \tau$, with $\tau$ defined as in Section \ref{sec:preliminary}. We define
\[\Det^{+}_{L(G)}(A)=\exp\left(\int_{(0,\infty)}\log(t)\,d\mu_{|A|}(t)\right).\]

We have explained how to get from $\ell^2$ Betti number conditions as in Theorem \ref{thm: vanishing L2 Betti} to conditions on $\partial f$ for some tuple $f$ of non-commutative polynomials as in Theorem \ref{thm: von Neumann algebraic}.  The other main ingredient needed to prove Theorem \ref{thm: vanishing L2 Betti} is positivity of Fuglede-Kadison determinant.  The following theorem of Elek and Szabo is the main way we know of to guarantee positivity of Fuglede-Kadison determinants.

\begin{thm}[Theorem 5 in \cite{ElekSzaboDeterminant}]\label{thm:ES Det}
Let $G$ be a countable, discrete, sofic group, and $m,n\in \bN$. Fix $A\in \bM_{m,n}(\bZ(G))$. Then
\[
\Det_{L(G)}^{+}(A)\geq 1.
\]
\end{thm}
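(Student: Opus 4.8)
The statement is the theorem of Elek and Szabo, and the plan is to recall its proof, which plays soficity against the integrality of the group ring $\bZ(G)$. First I would unwind the definition: $\Det_{L(G)}^{+}(A) = \exp\bigl(\int_{(0,\infty)}\log t\,d\mu_{|A|}(t)\bigr)$, where $|A| = (A^*A)^{1/2}$ is viewed in $\mathbb{M}_n(L(G))$ with spectral measure $\mu_{|A|}$ taken with respect to $\tr_n \otimes \tau$, the superscript $+$ recording that the (possible) atom of $\mu_{|A|}$ at $0$ is discarded. Thus it suffices to prove $\int_{(0,\infty)}\log t\,d\mu_{|A|}(t) \geq 0$. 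Next I would fix a sofic approximation $\sigma_k\colon G \to \operatorname{Sym}(n_k)$ and, writing $P_k(g) \in \mathbb{M}_{n_k}(\bZ)$ for the $0$--$1$ permutation matrix of $\sigma_k(g)$, extend $g \mapsto P_k(g)$ linearly and apply it entrywise to turn $A = (A_{ij}) \in \mathbb{M}_{m,n}(\bZ(G))$ into an \emph{integer} matrix $A_k \in \mathbb{M}_{m n_k,\, n n_k}(\bZ)$; the integrality here is the whole point of working over $\bZ$.

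The argument then rests on two inputs. The first is a weak$^*$ convergence statement: the empirical singular-value distributions $\mu_{|A_k|} := \frac{1}{n n_k}\sum_i \delta_{s_i(A_k)}$ converge to $\mu_{|A|}$. Since all these measures are supported on a common interval $[0,R]$ (replacing group elements by permutation matrices does not increase operator norms beyond the $\ell^1$-norm of the coefficients), this reduces to convergence of the even moments $\tr_{n n_k}\bigl((A_k^*A_k)^p\bigr) \to (\tr_n \otimes \tau)\bigl((A^*A)^p\bigr)$ for every $p$, which I would deduce from the defining properties of a sofic approximation: the normalized trace of any word in the $P_k(g)$'s converges to the corresponding group trace. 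The second input is that the integrality forces $\int_{(0,\infty)}\log t\,d\mu_{|A_k|}(t) \geq 0$ for every $k$: writing $r = \rk(A_k)$, the exterior-power (Cauchy--Binet) identity gives $\prod_{i=1}^{r} s_i(A_k)^2 = \sum_{|I| = |J| = r}\bigl(\det A_k[I\mid J]\bigr)^2$, which is a positive integer since the $r \times r$ minors of $A_k$ are integers and not all vanish; hence $\int_{(0,\infty)}\log t\,d\mu_{|A_k|}(t) = \frac{1}{n n_k}\sum_{i=1}^{r}\log s_i(A_k) \geq 0$.

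Finally I would pass to the limit, and this is the step I expect to be the main obstacle, since $\log$ is unbounded below so weak$^*$ convergence alone does not control $\int \log\,d\mu_{|A_k|}$. The key observation is that we only need a \emph{lower} bound on the limit, and the dangerous small-singular-value tail only pushes the finite quantities \emph{down}: for any continuity point $\epsilon \in (0,1)$ of $\mu_{|A|}$, since $\log t \leq 0$ on $(0,\epsilon)$ we have
\[
\int_{(0,\infty)}\log t\,d\mu_{|A_k|}(t) \;\leq\; \int_{[\epsilon,\infty)}\log t\,d\mu_{|A_k|}(t) \;\xrightarrow[k\to\infty]{}\; \int_{[\epsilon,\infty)}\log t\,d\mu_{|A|}(t),
\]
the convergence being legitimate because $t \mapsto \log t \cdot \mathbf{1}_{[\epsilon,\infty)}(t)$ is bounded with discontinuity set of $\mu_{|A|}$-measure zero. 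Combining with the integrality bound, $0 \leq \limsup_k \int_{(0,\infty)}\log t\,d\mu_{|A_k|}(t) \leq \int_{[\epsilon,\infty)}\log t\,d\mu_{|A|}(t)$, and letting $\epsilon \downarrow 0$ through continuity points, monotone convergence gives $\int_{[\epsilon,\infty)}\log t\,d\mu_{|A|}(t) \downarrow \int_{(0,\infty)}\log t\,d\mu_{|A|}(t)$. Hence $\int_{(0,\infty)}\log t\,d\mu_{|A|}(t) \geq 0$, i.e.\ $\Det_{L(G)}^{+}(A) \geq 1$, as desired. The only genuinely nontrivial ingredients are the soficity-driven moment convergence and this one-sided semicontinuity; the integrality input is elementary.
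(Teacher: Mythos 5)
The paper does not prove this statement at all: it is imported verbatim as Theorem 5 of Elek--Szab\'o, and is used later only as a black box to guarantee positivity of the relevant Fuglede--Kadison determinants. So there is no internal proof to compare against; what you have written is essentially a reconstruction of the original Elek--Szab\'o argument, and it is sound. The three ingredients are correctly identified and correctly assembled: (1) pushing $A$ forward along a sofic approximation to integer matrices $A_k$ and proving weak$^*$ convergence of the empirical singular value distributions via moment convergence on a common compact support --- this is the step that genuinely uses the sofic axioms (almost multiplicativity to replace traces of words in the $P_k(g)$ by traces of $P_k$ of the product word, almost freeness to evaluate them), and it deserves the few lines of bookkeeping you defer; (2) the exterior-power/Cauchy--Binet identity $\prod_{i\le r} s_i(A_k)^2=\sum_{|I|=|J|=r}\det A_k[I\mid J]^2\in\bZ_{\ge 1}$, which gives $\int_{(0,\infty)}\log t\,d\mu_{|A_k|}\ge 0$; and (3) the one-sided semicontinuity step, where your observation that only an upper bound on the truncated integrals is needed (so the possibly divergent small-singular-value tail and any rank/atom-at-zero discrepancy between $\mu_{|A_k|}$ and $\mu_{|A|}$ are harmless) is exactly the point that makes the limit passage legitimate, together with choosing $\epsilon$ among continuity points of $\mu_{|A|}$ and monotone convergence as $\epsilon\downarrow 0$.
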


Note that if $G$ is as in the statement of the above theorem, and $A\in \bM_{m,n}(\bQ(G))$ for some $m,n\in \bN$, then there is a $q\in \bN$ so that $qA\in M_{m,n}(\bZ(G))$. Thus
\[\Det_{L(G)}^{+}(A)=\frac{1}{q}\Det_{L(G)}^{+}(qA)\geq \frac{1}{q}>0.\]
Having collected the appropriate background material on derivations and $L^{2}$-Betti numbers, we now discuss why Theorem \ref{thm: von Neumann algebraic} implies Theorem \ref{thm: vanishing L2 Betti}.

\begin{proof}[Proof of Theorem \ref{thm: vanishing L2 Betti} from Theorem \ref{thm: von Neumann algebraic}]
Let $G=\langle{g_{1},\cdots,g_{s}|w_{1},\cdots,w_{l}\rangle}$ be a finite presentation of $G$. For $1\leq j\leq s$, set
\[x_{2j-1}=\frac{g_{j}+g_{j}^{-1}}{2},\,\,\ x_{2j}=\frac{g_{j}-g_{j}^{-1}}{2i},\]
and set $x=(x_{1},x_{2},\cdots,x_{2s})\in (\bC[G]_{\sa})^{2k}$. Let $q\colon \bC[\bF_{r}]\to \bC[G]$ and $y\in \bC[\bF_{r}]_{\sa}^{2k}$ be as in the proof of Proposition \ref{prop: switching from groups to algebras} (\ref{I:finite presentation in terms of *-alg}).  For $j=1,\cdots,l+2s$ define $f_{j}\in \bC\langle{t_{1},\cdots,t_{2k}\rangle}$ by
\[f_{j}=\begin{cases}
w_{j}(t_{1}+it_{2},t_{3}+it_{4},\cdots,t_{2s-1}+it_{2s}),& \textnormal{ if $1\leq j\leq l$}\\
t_{2j-1}t_{2j}-t_{2j}t_{2j-1},& \textnormal{ if $l+1\leq j\leq l+s$}\\
t_{2j-1}^{2}+t_{2j}^{2}-1,& \textnormal{ if $l+s+1\leq j\leq l+2s$.}
\end{cases}\]
By the proof of Proposition \ref{prop: switching from groups to algebras} (\ref{I:finite presentation in terms of *-alg}), we see that the kernel of $\ev_{x}\colon \bC\langle{t_{1},\cdots,t_{2s}}\to \bC[G]$ is generated (as an ideal) by
\[\{f_{1},f_{2},\cdots,f_{l+2s}\}.\]
Set $f=(f_{1},\cdots,f_{l+2s})$. Let $D_{f}$ be as in the statement of Theorem \ref{thm: von Neumann algebraic}. We leave it as an exercise to verify that $D_{f}\in M_{l+2s+1,2s}(\bQ(G\times G))$. By Theorem \ref{thm:ES Det}, we have that $\Det_{L(G)}^{+}(D_{f})>0$, i.e. $\int_{(0,\infty)}\log (t)\,d\mu_{|D_{f}|}(t)>-\infty$. All that remains is to verify that $D_{f}$ is injective. Recall that the $(1,j)$ entry of $D_f$ is $x_j \otimes 1 - 1 \otimes x_j$ and the remaining rows are given by the matrix of partial derivatives $\partial f$ discussed in \S \ref{subsec: DQ}.
Suppose that $\xi \in [L^{2}(M)\otimes L^{2}(M)]^{2s}$ and $D_{f}\#\xi =0$. This implies that $(\partial f)(x)\#\xi =0$. By Proposition  \ref{prop: derivations are a variety} we see that there is a derivation $\delta\colon \bC[G]\to L^{2}(M)\otimes L^{2}(M)$ so that $\xi_{j}=\delta(x_{j})$ for $j=1,\cdots,2s$. By Proposition \ref{prop: switching from groups to algebras} and the fact that $\beta^{1}_{(2)}(G)=0$, we find that $\delta$ is approximately inner. Thus we may choose a sequence $\zeta_{n}\in L^{2}(M)\otimes L^{2}(M)$ so that for all $j=1,\cdots, 2s$
\[\xi_{j}=\lim_{n\to\infty}[x_{j},\zeta_{n}]=\lim_{n\to\infty}(x_{j}\otimes 1-1\otimes x_{j}^{op})\#\zeta_{n}. \]
Since $D_{f}\#\xi=0$, we have that $(x_{j}\otimes 1-1\otimes x_{j}^{op})\#\xi_{j}=0$ for all $j=1,\cdots,2s$. Thus, for all $j=1,\cdots,2s:$
\[\|\xi_{j}\|_{2}^{2}=\lim_{n\to\infty}\langle{\xi_{j},(x_{j}\otimes 1-1\otimes x_{j}^{op})\#\zeta_{n}\rangle}=\lim_{n\to\infty}\langle{(x_{j}\otimes 1-1\otimes x_{j}^{op})\#\xi_{j},\zeta_{n}\rangle}=0.\]
So we have shown that $\xi=0$. Thus $D_{f}$ is injective, and this completes the proof.
\end{proof}

More generally, the same proof shows that if $(A,\tau)$ is any tracial $*$-algebra and
\begin{itemize}
    \item $\beta^{1}_{(2)}(A,\tau)=0$,
    \item there exists a generating tuple $x\in A_{\sa}^{d}$ and $f\in \bC\langle{t_{1},\cdots,t_{d}\rangle}^{\oplus m}$ so that $\{f_{1},\cdots,f_{m}\}$ generates $\ev_{x}$ as an ideal, and with $\Det_{A}((\partial f)(x))>0$,
\end{itemize}
then $W^{*}(A,\tau)$ is strongly $1$-bounded. This recovers the case $n=\rank((\partial F)(x))$ of \cite[Theorem 2.5]{Shl2015}.

\subsection{Strong $1$-boundedness of Property (T) sofic groups from Theorem \ref{thm: vanishing L2 Betti}} \label{sec:PropTsofic}

The vanishing of first $\ell^2$-Betti numbers for Property (T) groups was obtained in \cite{Bekka1997GroupCH} (see Corollary 6). 
We will need a little more than the above result to give a short proof that sofic groups with Property (T) are strongly $1$-bounded.
Specifically, will need the full strength of the Delorme-Guichardet Theorem \cite{DelormeT, GuichardetT}, which is about cohomology of groups with values in a unitary representation.
This is because we will need not just the cohomology with values in the left regular representation of a group, but in the quasi-regular representation on $\ell^{2}$ of a coset spaces.
Let $G$ be a countable, discrete group and $\pi\colon G\to \mathcal{U}(\mathcal{H})$ a unitary representation. A \emph{cocycle for $\pi$} is a map $\beta\colon G\to \mathcal{H}$ which satisfies
\[\beta(gh)=\pi(g)\beta(h)+\beta(g)\mbox{ for all $g,h\in G$.}\]
We say that $\beta$ is \emph{inner} if there is a $\xi\in \mathcal{H}$ so that $\beta(g)=\pi(g)\xi-\xi$. The Delorme-Guichardet theorem says that $G$ has (T) if and only if for every cocycle on $G$ with values in a unitary representation is inner. See \cite[Section 2.12]{BHV} for a proof.

\begin{lemm}
Let $\widetilde{G},G$ be  Property (T) groups and let $q\colon \widetilde{G}\to G$ be a surjective homomorphism. Let $\mathcal{H}$ be an $L(G)-L(G)$ bimodule, and view $\mathcal{H}$ as a bimodule over $\bC[\widetilde{G}]$ via $q$. Then every derivation $\delta\colon \bC[\widetilde{G}]\to \mathcal{H}$ is inner.
\end{lemm}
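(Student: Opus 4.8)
The plan is to recognize a derivation on $\bC[\widetilde{G}]$ valued in the $L(G)$-$L(G)$ bimodule $\cH$ as, up to a twist, a $1$-cocycle for a genuine unitary representation of $\widetilde{G}$, and then invoke the Delorme--Guichardet theorem. Since $\cH$ is an $L(G)$-$L(G)$ correspondence, it is a Hilbert space on which each unitary $u \in \mathcal{U}(L(G))$ acts isometrically and invertibly by $\xi \mapsto u\xi u^*$; in particular, identifying $g \in \widetilde{G}$ with $\lambda(q(g)) \in \mathcal{U}(L(G))$, the formula $\pi(g)\xi := q(g)\,\xi\, q(g)^{-1}$ defines a unitary representation $\pi \colon \widetilde{G} \to \mathcal{U}(\cH)$ (this is where one uses that $\cH$ is a correspondence, not merely an algebraic bimodule).

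Next, given a derivation $\delta \colon \bC[\widetilde{G}] \to \cH$, I would set $b(g) := \delta(g)\, q(g)^{-1}$, using the right action of $L(G)$ on $\cH$. A short computation with the Leibniz rule $\delta(gh) = \delta(g)\,q(h) + q(g)\,\delta(h)$ gives
\[
b(gh) = \bigl(\delta(g)q(h) + q(g)\delta(h)\bigr) q(h)^{-1} q(g)^{-1} = \delta(g)q(g)^{-1} + q(g)\,\delta(h)q(h)^{-1}\,q(g)^{-1} = b(g) + \pi(g)b(h),
\]
so $b$ is a $1$-cocycle for $\pi$ in exactly the sense recalled before the statement.

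Now I would apply Property (T) of $\widetilde{G}$: by the Delorme--Guichardet theorem there is $\xi_0 \in \cH$ with $b(g) = \pi(g)\xi_0 - \xi_0$ for all $g \in \widetilde{G}$. Unwinding the twist, $\delta(g) = b(g)\,q(g) = q(g)\xi_0 - \xi_0 q(g) = [q(g), \xi_0]$ for every $g \in \widetilde{G}$. Since $\bC[\widetilde{G}]$ is spanned by group elements and both $\delta$ and $a \mapsto [a,\xi_0]$ are derivations, they agree on all of $\bC[\widetilde{G}]$; hence $\delta$ is inner. (Note that only Property (T) of $\widetilde{G}$ is actually used; the hypothesis that $G$ has (T) is automatic since (T) passes to quotients.)

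There is essentially no serious obstacle here: the only points needing care are checking that the conjugation action $\pi$ is genuinely unitary — which rests on $\cH$ being a Hilbert bimodule — and confirming that the cocycle identity produced by the twist $b(g) = \delta(g)q(g)^{-1}$ matches the convention $\beta(gh) = \pi(g)\beta(h) + \beta(g)$ used in the Delorme--Guichardet theorem, which it does.
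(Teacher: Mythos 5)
Your proof is correct and follows essentially the same route as the paper: twist the derivation into the cocycle $b(g)=\delta(g)q(g)^{-1}$ for the conjugation representation, apply Delorme--Guichardet using Property (T) of $\widetilde{G}$, and unwind to exhibit $\delta$ as a commutator with a single vector. Your added remarks (that unitarity of the conjugation action uses the Hilbert-bimodule structure, and that only (T) of $\widetilde{G}$ is really needed) are accurate but do not change the argument.
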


\begin{proof}


Suppose that $\delta\colon \bC[\widetilde{G}]\to \mathcal{H}$ is a derivation. Define $\beta\colon \widetilde{G}\to \mathcal{H}$ by $\beta(x)=\delta(x)u_{q(x)}^{-1}$.  The fact that $\delta$ is a derivation implies, by a direct calculation, that $\beta$ is a cocycle for $\pi$. By the Delorme-Guichardet theorem and the fact that $\widetilde{G}$ has Property (T) we know that $\beta$ is inner, i.e. there is a $\xi\in\mathcal{H}$ so that $\beta(x)=u_{q(x)}\xi u_{q(x)}^{-1}-\xi$ for all $x\in \widetilde{G}$. So for all $x\in \widetilde{G}$
\[\delta(x)=\beta(x)u_{q(x)}=u_{q(x)}\xi-\xi u_{q(x)},\]
and this  verifies that $\delta$ is inner.

\end{proof}

We will primarily interested in the following special case of the above lemma.

\begin{cor}
Let $\widetilde{G},G$ be infinite Property (T) groups and let $q\colon \widetilde{G}\to G$ be a surjective homomorphism. Set $M=L(G)$. Then every derivation $\delta\colon \bC[\widetilde{G}]\to L^{2}(M)\otimes L^{2}(M)$ is inner.
\end{cor}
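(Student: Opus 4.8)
The plan is to observe that this corollary is simply the specialization of the lemma immediately above to one particular bimodule, so essentially no new work is required. First I would recall from the discussion in \S\ref{sec: cocycles} preceding Proposition \ref{prop: derivations are a variety} that for any tracial von Neumann algebra $(M,\tau)$ the Hilbert space $L^2(M)\otimes L^2(M)$ carries a natural $M$-$M$ bimodule structure, where $a\in M$ acts on the left by left multiplication by $a\otimes 1$ and on the right by right multiplication by $1\otimes a$. Taking $M=L(G)$, this exhibits $\mathcal{H}:=L^2(M)\otimes L^2(M)$ as an $L(G)$-$L(G)$ bimodule.

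Next I would note that restricting the left and right actions along the canonical embedding $\bC[G]\hookrightarrow L(G)$ and then pulling back along the linear extension $q\colon \bC[\widetilde{G}]\to \bC[G]$ of the surjection $q\colon \widetilde{G}\to G$ produces exactly the $\bC[\widetilde{G}]$-bimodule structure on $\mathcal{H}$ contemplated in the hypotheses of the preceding lemma. Since $\widetilde{G}$ and $G$ are both Property (T) groups and $q$ is surjective, that lemma applies verbatim and yields that every derivation $\delta\colon \bC[\widetilde{G}]\to L^2(M)\otimes L^2(M)$ is inner, which is the assertion.

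There is essentially no obstacle here: the only thing to verify is the bookkeeping that the $\bC[\widetilde{G}]$-bimodule appearing in the corollary is the pullback via $q$ of an honest $L(G)$-$L(G)$ bimodule, which is immediate from the definitions of the coarse bimodule and of the actions $\#_{\operatorname{in}}$ introduced in \S\ref{sec: cocycles}. (The hypothesis that $\widetilde{G}$ and $G$ are infinite plays no role in this deduction; it is recorded only because it is the case relevant to the strong $1$-boundedness application that follows.) The substantive input — the Delorme--Guichardet theorem, which converts a $1$-cocycle for a unitary representation of a Property (T) group into an inner cocycle — has already been carried out inside the proof of the lemma, so nothing beyond invoking it is needed.
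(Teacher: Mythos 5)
Your proposal is correct and coincides with the paper's treatment: the corollary is stated there precisely as the special case of the preceding lemma in which $\mathcal{H}$ is the coarse bimodule $L^{2}(M)\otimes L^{2}(M)$ over $M=L(G)$, pulled back to $\bC[\widetilde{G}]$ via $q$, with no further argument needed. Your remark that the infiniteness hypothesis is not used here (it only matters for the later dimension count of inner derivations) is also accurate.
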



We now show that Property (T) sofic groups are strongly $1$-bounded. We argue directly from \cite{Shl2015} using a Theorem of Shalom on the structure of Property (T) groups.

\begin{cor}
Let $G$ be an infinite Property (T) sofic group. Then $L(G)$ is strongly $1$-bounded.

\end{cor}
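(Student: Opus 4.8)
The plan is to run the argument of Theorem~\ref{thm: vanishing L2 Betti}, but fed by a finitely presented group lying over $G$, with the preceding corollary playing the role of the hypothesis $\beta^{1}_{(2)}(G)=0$.  By Shalom's theorem \cite{ShalomCohomolgyCharacterize}, a countable group has Property (T) if and only if it is a quotient of a finitely presented group with Property (T); so I would fix a finitely presented Property (T) group $\widetilde{G}$ and a surjection $q\colon\widetilde{G}\to G$, noting that $\widetilde{G}$ is infinite because $G$ is.  Choose a finite presentation $\widetilde{G}=\langle a_{1},\dots,a_{s}\mid w_{1},\dots,w_{l}\rangle$ and put $g_{j}=q(a_{j})$, which generate $G$.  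Inside $M:=L(G)$ form the self-adjoint tuple $x=(x_{1},\dots,x_{2s})$ with $x_{2j-1}=(g_{j}+g_{j}^{-1})/2$ and $x_{2j}=(g_{j}-g_{j}^{-1})/2i$, so that $W^{*}(x)=M$, and let $f=(f_{1},\dots,f_{l+2s})$ be the tuple of non-commutative polynomials assembled from the relators $w_{j}$, the commutators $t_{2j-1}t_{2j}-t_{2j}t_{2j-1}$ and the elements $t_{2j-1}^{2}+t_{2j}^{2}-1$ exactly as in the proof of Theorem~\ref{thm: vanishing L2 Betti}.  Then $f(x)=0$, and by Proposition~\ref{prop: switching from groups to algebras}(\ref{I:finite presentation in terms of *-alg}) applied to $\widetilde{G}$ the $f_{i}$ generate $\ker(\ev_{\tilde a}\colon\bC\ang{t_{1},\dots,t_{2s}}\to\bC[\widetilde{G}])$ as a two-sided ideal, where $\tilde a$ is the analogous tuple in $\bC[\widetilde{G}]$.

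Next I would check the two hypotheses of Theorem~\ref{thm: von Neumann algebraic} for $D_{f}=D_{f}(x)$.  Just as in the proof of Theorem~\ref{thm: vanishing L2 Betti}, $D_{f}$ has entries in $\bQ(G\times G)$; since $G$ is sofic, so is $G\times G$, so Theorem~\ref{thm:ES Det} gives $\Det^{+}(D_{f})>0$ with respect to $L(G\times G)\cong M\overline{\otimes}M^{\op}$, i.e.\ $\int_{(0,\infty)}|\log t|\,d\mu_{|D_{f}|}(t)<\infty$ (the behaviour near $t=\infty$ causes no trouble, $\mu_{|D_{f}|}$ being compactly supported).  Once we also know that $D_{f}$ is injective --- so that $\mu_{|D_{f}|}$ has no atom at $0$ --- this becomes the full hypothesis \eqref{eq: integrable}, and Theorem~\ref{thm: von Neumann algebraic} gives that $M=L(G)$ is strongly $1$-bounded.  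So everything comes down to injectivity of $D_{f}$.

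This is the step I expect to require the most care, because $G$ itself need not be finitely presented, so the $f_{i}$ generate $\ker\ev_{\tilde a}$ and not $\ker\ev_{x}$.  Suppose $\xi\in(L^{2}(M)\otimes L^{2}(M))^{2s}$ satisfies $D_{f}\#\xi=0$.  Reading off the rows of $D_{f}$ yields $(\partial f_{i})(x)\#\xi=0$ for every $i$ together with $\sum_{k=1}^{2s}[x_{k},\xi_{k}]=0$.  Since $\delta'(f_{i})=(\partial f_{i})(x)\#\xi$, the first family of equations says that the derivation $\delta'$ of the free algebra $\bC\ang{t_{1},\dots,t_{2s}}$ determined by $\delta'(t_{k})=\xi_{k}$ annihilates each $f_{i}$; because the $f_{i}$ act as $0$ on the bimodule (they lie in $\ker\ev_{x}$) and generate $\ker\ev_{\tilde a}$ as an ideal, $\delta'$ annihilates $\ker\ev_{\tilde a}$ and hence descends to a derivation $\widetilde{\delta}\colon\bC[\widetilde{G}]\to L^{2}(M)\otimes L^{2}(M)$, for the $\bC[\widetilde{G}]$-bimodule structure induced from $M$ via $q$, with $\widetilde{\delta}(\tilde a_{k})=\xi_{k}$.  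This is precisely where Property (T) of $\widetilde{G}$ enters: by the preceding corollary $\widetilde{\delta}$ is inner, so there is $\zeta\in L^{2}(M)\otimes L^{2}(M)$ with $\xi_{k}=[x_{k},\zeta]$ for all $k$.  As each $x_{k}$ is self-adjoint, the operator $\eta\mapsto[x_{k},\eta]$ on $L^{2}(M)\otimes L^{2}(M)$ is self-adjoint, so
\[
\sum_{k=1}^{2s}\norm{\xi_{k}}_{2}^{2}=\sum_{k=1}^{2s}\langle[x_{k},\xi_{k}],\zeta\rangle=\Big\langle\sum_{k=1}^{2s}[x_{k},\xi_{k}],\,\zeta\Big\rangle=0,
\]
which forces $\xi=0$.

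The only genuinely new feature compared with the proof of Theorem~\ref{thm: vanishing L2 Betti} is that an element $\xi$ of $\ker D_{f}$ has to be recognised as a derivation of $\bC[\widetilde{G}]$ rather than of $\bC[G]$, which is exactly what makes the preceding corollary --- equivalently, Property (T) of $\widetilde{G}$ through the Delorme--Guichardet theorem --- applicable.  Soficity is used only on $G\times G$, to control the Fuglede--Kadison determinant; no soficity is required of $\widetilde{G}$.
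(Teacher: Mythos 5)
Your proof is correct and follows essentially the same route as the paper: Shalom's finitely presented Property (T) cover $\widetilde{G}\twoheadrightarrow G$, the relator/commutator/unitarity polynomials, innerness of derivations $\bC[\widetilde{G}]\to L^2(M)\otimes L^2(M)$ via Delorme--Guichardet, and Elek--Szabo for positivity of the Fuglede--Kadison determinant. The only difference is the final reduction: the paper computes $\dim_{M\overline{\otimes}M^{\op}}\ker((\partial F)(x)\#)=1$ and cites Shlyakhtenko's theorem from \cite{Shl2015}, whereas you add the commutator row to get injectivity of $D_f$ and apply Theorem \ref{thm: von Neumann algebraic} directly --- exactly the alternative the paper itself notes parenthetically (via its proof of Theorem \ref{thm: vanishing L2 Betti}).
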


\begin{proof}
Since $G$ has Property (T), it is finitely generated. By a theorem of Shalom \cite[Theorem 6.7]{ShalomCohomolgyCharacterize}, there is a finitely presented Property (T) group $\widetilde{G}$ and a surjective homomorphism $q\colon \widetilde{G}\to G$. It may be that $\widetilde{G}$ is not sofic. However, we will still be able to use soficity of $G$ to apply Shlyakhtenko's results to our setting.

Let $\widetilde{S}$ be a finite generating set of $\widetilde{G}$ and set $S=q(\widetilde{S})$. Then there is a finite set $R$ of words in $S$ so that $\widetilde{G}$ has a presentation $\langle{S|R\rangle}$. Use $S$ to build self-adjoint generators $x=(x_{1},\cdots,x_{m})$ of $\bC[G]$ which have lifts $\widetilde{x}=(\widetilde{x}_{1},\cdots,\widetilde{x}_{r})$ to generators of $\widetilde{G}$. Now use the relations $R$ to produce $F_{1},\cdots,F_{r}\in \bQ[i]\langle{t_{1},\cdots,t_{m}\rangle}$ with the property that if $J$ is the ideal generated by $F_{1},\cdots, F_{r}$, then the natural map $\bC\langle{t_{1},\cdots,t_{r}\rangle}\to \bC[\widetilde{G}]$ given by $F\mapsto F(\widetilde{x})$ has kernel $J$. Let $F=(F_{1},\cdots,F_{r})$. By the proof of Proposition \ref{prop: derivations are a variety}, we have that
\[\ker((\partial F)(x)\#)\cong \Der(\bC[\widetilde{G}],L^{2}(M)\otimes L^{2}(M))\]
with $M = L(G)$. By the preceding corollary, it follows that $\ker((\partial F)(x))$ corresponds under this isomorphism to the inner derivations $\bC[\widetilde{G}]\to L^{2}(M)\otimes L^{2}(M)$, and since $M$ is diffuse
\[\dim_{M\overline{\otimes}M^{op}}(\ker((\partial F)(x)\#))=\dim_{M\overline{\otimes}M^{op}}(\Inn(\bC[\widetilde{G}],L^{2}(M)\otimes L^{2}(M)))=1.\]
Further, since $F_{1},\cdots,F_{r}\in \bQ[i]\langle{t_{1},\cdots,t_{r}\rangle}$, we know from soficity of $G$ and Theorem \ref{thm:ES Det} that $\det_{M}^{+}((\partial F)(x))>0$. Thus a theorem of Shlyakhtenko \cite{Shl2015}  implies that $M$ is strongly $1$-bounded (this also follows from our proof of Theorem \ref{thm: vanishing L2 Betti} from Theorem \ref{thm: von Neumann algebraic}, see the discussion at the end of the previous subsection).
\end{proof}

\begin{ack}
This project began when the first and third authors met in the conference titled ``Classification of von Neumann algebras" held at BIRS. We thank the organizers for arranging this conference, and BIRS for providing a stimulating research environment.

\end{ack}

\begin{funding}
B. Hayes gratefully acknowledges support from the NSF grant DMS-2000105.  D. Jekel was supported by NSF grant DMS-2002826.
\end{funding}
%
%

\end{document}